\pgfplotsset{compat=1.18}
\newtheorem{prop}{Proposition}[section]
\newtheorem{lem}[prop]{Lemma}
\newtheorem{rmk}[prop]{Remark}
\theoremstyle{definition}
\newtheorem{defn}[prop]{Definition}
\crefname{appsec}{appendix}{appendices}
\Crefname{appsec}{Appendix}{Appendices}
\crefname{prop}{proposition}{propositions}
\Crefname{prop}{Proposition}{Propositions}
\crefname{equation}{}{}
\Crefname{equation}{}{}
\DeclareMathOperator*{\diag}{diag}
\newcommand{\ddt}[1]{\frac{\mathrm{d}#1}{\mathrm{d}t}}
\newcommand{\bn}[2]{C_{#1}^{#2}} 
\newcommand{\Mu}{M} 
\begin{document}
\title{Computation of attractor dimension and maximal sums of Lyapunov exponents using polynomial optimization}
\author{Jeremy P. Parker$^1$, David Goluskin$^2$}
\date{$^1$Division of Mathematics, University of Dundee, Dundee, DD1 4HN, United Kingdom\\
$^2$Department of Mathematics and Statistics, University of Victoria, Victoria, BC, V8P 5C2, Canada}

\maketitle

\begin{abstract}
Two approaches are presented for computing upper bounds on Lyapunov exponents and their sums, and on the Lyapunov dimension, among all trajectories of a dynamical system governed by ordinary differential equations. The first approach expresses a sum of Lyapunov exponents as a time average in an augmented dynamical system and then applies methods for bounding time averages. This generalizes the method of Oeri \& Goluskin (Nonlinearity 36:5378--5400, 2023) for bounding the single largest Lyapunov exponent. The second approach considers a different augmented dynamical system, where bounds on sums of Lyapunov exponents are implied by stability of certain sets, and such stability is verified using Lyapunov function methods. Both of our approaches also can be adapted to directly compute bounds on Lyapunov dimension, which in turn imply bounds on the fractal dimension of a global attractor. For systems of ordinary differential equations with polynomial right-hand sides, all of our bounding formulations lead to polynomial optimization problems with sum-of-squares constraints. These sum-of-squares problems can be solved computationally for a chosen system to yield numerical bounds, provided the number of variables and degree of polynomials are not prohibitive. Most of our bounding formulations are proved to be sharp under mild assumptions. In the case of the polynomial optimization problems, sharpness means that upper bounds converge to the quantities being bounded as polynomial degrees are raised. Computational examples demonstrate upper bounds that are sharp to several digits, including for a six-dimensional dynamical system where sums of Lyapunov exponents are maximized on periodic orbits.
\end{abstract}

\section{Introduction}
The detection and characterization of chaos is a major computational challenge for many dynamical systems. Chaotic properties are often quantified via the spectrum of Lyapunov exponents (LEs), which can be computed over individual trajectories. When combined with other knowledge about stability or local attractors, LEs can imply, for example, existence of hyperchaos or estimates of attractor dimension. In this paper we formulate upper bounds on sums of LEs, and on the related Lyapunov dimension of a global attractor. These bounds apply to all possible trajectories, yet they do not require knowing any trajectories. Such upper bounds are complementary to typical trajectory-based computations, which give corresponding lower bounds on sums of LEs and Lyapunov dimension.

The LE spectrum quantifies the asymptotic rates at which linearized perturbations in different directions grow or decay along trajectories of a dynamical system. For dynamics in $n$ dimensions there are $n$ exponents, indexed in decreasing order. The sum of the first $k$ exponents for a trajectory gives the exponential rate of expansion or contraction of $k$-dimensional volumes in state space, infinitesimally close to that trajectory. The first exponent governs the stretching of lines, the sum of the first two governs the growth of areas, and so on. In conservative systems, $n$-dimensional volumes in state space are conserved, so the sum of all $n$ LEs is zero. In dissipative systems, the LE sums are negative beyond a certain value of $k$, reflecting the contraction of higher-dimensional volumes. One way of defining a value $k$ at which this contraction begins leads to the Lyapunov dimension, which is non-integer in general, and which provides a natural connection between dynamical stability and the fractal geometry of invariant sets such as strange attractors.

We consider autonomous, continuous-time dynamics governed by an ordinary differential equation (ODE) system,
\begin{equation}
    \ddt{}X(t) = F(X(t)),\qquad X(0)=X_0,
    \label{eq:ODE}
\end{equation}
where each trajectory $X(t)$ remains in the state space $\Omega\subset \mathbb{R}^n$ for $t\ge0$, where $\Omega$ may be all of $\mathbb{R}^n$, a subdomain of $\mathbb{R}^n$ or a lower-dimensional manifold embedded in $\mathbb{R}^n$. Assume that $ F\in C^1(\Omega,\mathbb{R}^n)$, where $C^k(A,B)$ denotes the space of functions mapping $A$ to $B$ that are $k$ times continuously differentiable, and when $B=\mathbb{R}$ we write $C^k(A)$.

The present work builds on existing methods for computing global properties of ODE dynamical systems. These methods rely on finding auxiliary functions $V\in C^1(\Omega)$ subject to certain pointwise inequalities on the state space that,  in turn, imply statements about the dynamics. The best known examples of auxiliary functions are Lyapunov functions, whose inequality conditions imply statements about stability. (Our use of Lyapunov functions to study Lyapunov exponents is not typical, despite both objects bearing the same person's name.) Among the lesser known types of auxiliary functions are those whose inequality conditions imply bounds on infinite-time averages. For each quantity studied in the present work, we formulate two different upper bounds: a ``shifted spectrum'' approach based on Lyapunov function conditions for stability, and a ``sphere projection'' approach based on auxiliary function conditions for bounding time averages. 

The shifted spectrum approach that we present in \cref{sec:shifted} uses conditions on $V$ that are similar to standard Lyapunov function conditions in stability theory. Such pointwise inequality conditions may be, for instance,
\begin{equation}
\label{eq:lyapunovfunction}
    V(X)\geq0,\qquad F(X)\cdot DV(X)\le 0 \qquad \forall\, X\in\Omega,
\end{equation}
where $DV$ denotes the gradient of $V$ with respect to $X$. Here $F(X)\cdot DV(X)$ is the Lie derivative of $V$ with respect to the flow since the usual chain rule gives $\ddt{}V(X(t)) = V(X(t)) \cdot DV(X(t))$. The existence of $V$ satisfying \cref{eq:lyapunovfunction} would imply that the subset of $\Omega$ on which $V(X)=0$ is Lyapunov stable, meaning that trajectories remain arbitrarily close to this set if they start sufficiently close \citep{khalil2002nonlinear}. Until about 25 years ago, there was no broadly applicable method for finding $V$ subject to pointwise inequalities like those in~\cref{eq:lyapunovfunction}. Now, however, it is often possible to construct such $V$ computationally using optimization methods based on sum-of-squares (SOS) polynomials \citep{parrilo2000structured, papachristodoulou2002construction}. In particular, if the ODE right-hand side $F$ is polynomial in the components of $X$, and if $V$ is sought from a finite-dimensional space of polynomials, then $F\cdot DV$ also belongs to a finite-dimensional polynomial space. Deciding whether the polynomials $V$ and $-F\cdot DV$ are nonnegative on $\Omega$ is prohibitively hard in general \cite{murty1985some}, but this nonnegativity can be ensured by stronger and more tractable SOS conditions. For instance, simply requiring $V$ to be SOS, meaning that it can be represented as a sum of squares of other polynomials, would imply $V\ge0$ on all of $\mathbb{R}^n$.  Other SOS conditions can imply nonnegativity on $\Omega$ rather than on $\mathbb{R}^n$, as explained in \cref{sec:sos}.

The sphere projection approach that we present in \cref{sec:sphere} uses conditions on an auxiliary function $V$ for bounding time averages. For any $\Phi\in C^0(\Omega)$ evaluated along a trajectory $X(t)$ with initial condition $X_0$, define the infinite-time average $\overline\Phi$ as
\begin{equation}
\overline{\Phi} (X_0) = \limsup_{T\to\infty}\frac{1}{T}\int_0^T\Phi(X(t))\,\mathrm{d}t.
\label{eq:overline}
\end{equation}
On each $X(t)$ that remains bounded forward in time, boundedness of $V(X(t))$ implies $\overline{\ddt{}V}=0$, and thus $\overline{F \cdot DV}=0$. Therefore, on each bounded trajectory in $\Omega$,
\begin{equation}
    \overline{\Phi} = \overline{\Phi+F \cdot DV}\le \sup_{X\in\Omega}[\Phi+F\cdot DV].
    \label{eq:Phi-ineq}
\end{equation}
This upper bound is useful because it does not involve individual trajectories. Assuming each trajectory in $\Omega$ is bounded for $t\ge0$,  one can maximize the left-hand side over $X_0$ and minimize the right-hand side over $V$ to find
\begin{equation}
\label{eq:Phi-opt}
    \sup_{X_0\in \Omega}\overline{\Phi} \le \inf_{V\in C^1(\Omega)} \sup_{X\in \Omega}[ \Phi + F\cdot DV ].
\end{equation}
Furthermore, the inequality in \cref {eq:Phi-opt} is an \emph{equality} when the forward-invariant set $\Omega$ is compact \cite{tobasco2018optimal,Bochi2018}, and this equality underlies sharpness of our sphere projection approach. When $\Phi$, $V$ and $F$ are polynomial, so is the expression $\Phi+F \cdot DV$ on the right-hand side of  \cref{eq:Phi-opt}. In this case, the min--max problem can be relaxed into a minimization subject to SOS constraints, as explained in \cref{sec:sos}. The relaxation is an SOS optimization problem whose objective is to minimize the resulting upper bound on $\overline\Phi$. These ideas were used by \citet{oeri2023convex} to bound the single largest LE. Here we extend this approach to sums of LEs and to Lyapunov dimension.

Our two approaches are the first broadly applicable computational methods that can give convergent upper bounds on global maxima of LE sums or Lyapunov dimension. Unlike methods that rely on pointwise optimization over state space \citep{doering1995shape}, our methods remain sharp when the maximizing orbits are periodic, rather than stationary. Furthermore, our methods give a systematic way to search for the maximizing trajectories themselves, as demonstrated in \cref{sec:duffing}.

The rest of the paper is organized as follows. \Cref{sec:continuous} defines sums of LEs and Lyapunov dimension and then describes our two general approaches to bounding these quantities. After arriving at more general optimization problems, we strengthen their constraints to obtain SOS optimization problems, and then we describe how to exploit symmetries. \Cref{sec:examples} presents computational results from applications of our methods to two examples: the chaotic Duffing oscillator, which can be formulated as an autonomous system in 4 variables, and a 3-degree-of-freedom Hamiltonian system adapted from~\cite{palacian2017periodic}, which is an autonomous system in 6 variables. For all LE sums in both examples, we confirm that our computed bounds are sharp to several digits by finding periodic orbits on which the bounds are approximately saturated. Concluding remarks are given in \cref{sec:conclusion}. For completeness, \cref{sec:compounds} summarizes necessary material about exterior products, and \cref{sec:appendix2} proves a non-standard variant of an existence theorem for Lyapunov functions.

\section{Optimization problems and SOS relaxations}
\label{sec:continuous}

This section defines the quantities we seek to bound and formulates our methods for doing so. \Cref{sec:LE defs} gives expressions for LE sums and Lyapunov dimension that are best suited to what follows. In \cref{sec:sphere} we describe approaches for bounding these quantities based on projecting tangent space vectors onto the unit sphere. In \cref{sec:shifted} we describe approaches that instead shift the spectrum of the Jacobian so that tangent vectors remain bounded. \Cref{sec:sos} describes how each formulation is relaxed into SOS optimization problems that can be implemented numerically. Finally, \cref{sec:syms} describes how symmetries may be exploited in the optimization problems.

\subsection{Tangent vectors, Lyapunov exponents and Lyapunov dimension}
\label{sec:LE defs}

For the remainder of this paper, the ODE system whose LEs are to be studied is denoted by
\begin{equation}
\ddt{}x(t) = f(x(t)), \qquad x(0)=x_0,
\label{eq:dynsys}
\end{equation}
which is the same as~\cref{eq:ODE} but with lowercase $x$ and $f$. The domain of $x(t)$ is denoted by $\mathcal{B}\subset\mathbb{R}^n$, so $f\in C^1(\mathcal{B},\mathbb{R}^n)$. The notation in~\cref{eq:ODE} with uppercase $X$ and $F$, and domain $\Omega$, will be reserved for ODEs associated with~\cref{eq:dynsys} on augmented state spaces that combine the dynamics of $x$ with dynamics on its tangent space. Trajectories are assumed to remain in $\mathcal{B}$ for all $t\ge0$, meaning $\mathcal{B}$ is forward invariant. In cases where $\mathcal{B}$ is unbounded, we further assume that each trajectory remains bounded for $t\ge0$, meaning there is no forward-time blowup. The differentiability of $f$ and the assumption of no blowup guarantee well-posedness for all positive times, meaning there exists a differentiable flow map $\phi:\mathbb{R}_+\times\mathcal{B}\to\mathcal{B}$ satisfying $x(t) = \phi^t(x_0)$ for every initial condition $x_0\in\mathcal{B}$ and all $t\ge0$.

The LEs associated with a trajectory $x(t)$ quantify the convergence or divergence of infinitesimally nearby trajectories. Linearization of~\cref{eq:dynsys} around $x(t)$ defines the dynamics of the tangent vector $y(t)$, which evolves in the tangent space $\mathbb{R}^n$ according to
\begin{equation}
    \ddt{}y(t) = Df(x(t))\,y(t), \qquad y(0)=y_0,
    \label{eq:y ode}
\end{equation}
where $Df(x)$ denotes the $n\times n$ Jacobian matrix of $f$ at $x$, and $y_0$ is the initial tangent vector. Equivalently, $y(t)$ is given by the linearized finite-time flow map as 
\begin{equation}
    y(t)=D\phi^t(x_0)\,y_0.
    \label{eq:y flow map}
\end{equation}
We can restrict $y_0$ to the unit sphere,
\begin{equation}
\label{eq:S notation}
\mathbb{S}^{n-1} = \{ x \in \mathbb{R}^n \,:\,|x| = 1\},
\end{equation}
because the tangent vector dynamics are linear in $y$, so scaling $y_0$ simply scales $y(t)$ proportionally. Considering the separation between trajectories starting at $x_0$ and at $x_0+\varepsilon y_0$, Taylor expanding the flow map and using~\cref{eq:y flow map} and $|y_0|=1$ gives 
\begin{align}
    |\phi^t(x_0)-\phi^t(x_0+\varepsilon y_0)| = \varepsilon |y(t)| + O(\varepsilon^2).
    \label{eq:nearby}
\end{align}
The tangent vector typically shrinks or grows exponentially like $|y(t)|\approx e^{\mu t}$ for some average rate $
\mu(x_0,y_0)$, in which case \cref{eq:nearby} suggests
\begin{equation}
    |\phi^t(x_0)-\phi^t(x_0+\varepsilon y_0)|\approx \varepsilon e^{\mu(x_0,y_0) t}.
    \label{eq:nearby mu}
\end{equation}
If $\mu<0$ this approximation can be valid for all time. If $\mu>0$ this approximation can be valid only until the separation between trajectories grows too large for  \cref{eq:nearby} to apply, but this time horizon approaches infinity as $\varepsilon\to0$. Motivated by the expectation that $|y(t)|\approx e^{\mu t}$, the corresponding LE can be defined precisely as
\begin{equation}
    \mu(x_0,y_0)=\limsup_{t\to\infty}\frac1t \log|y(t)|.
    \label{eq:LE x0 y0}
\end{equation}
For each trajectory $x(t)$, which is determined by its initial condition $x_0$, the LE $\mu$ defined by \cref{eq:LE x0 y0} generally depends on the initial tangent vector direction $y_0$. The Oseledets ergodic theorem guarantees that different $y_0$ give at most $n$ different LEs \citep{pikovsky2016lyapunov}, the largest of which is called the leading LE. \Cref{def:leadingle} below gives an expression for the leading LE. The literature on LEs uses various definitions that are equivalent in many settings \citep{pikovsky2016lyapunov}. 

\begin{defn}
\label{def:leadingle}
For $x(t)$ evolving by the ODE \cref{eq:dynsys}, the \emph{leading Lyapunov exponent} is
\begin{equation}
\label{eq:leadingle}
\mu_1(x_0) = \sup_{y_0\in\mathbb{S}^{n-1}}\limsup_{t\to\infty} \frac{1}{t}\log{|y(t)|},
\end{equation}
where the tangent vector $y(t)$ evolves by \cref{eq:y ode}.
\end{defn}

The leading LE is the average growth rate of lengths in the tangent space, maximized over the initial direction $y_0$. A natural generalization is to consider growth rates of $k$-dimensional volumes in the tangent space for any $k\le n$. From this one can define leading sums of LEs and, in turn, the full spectrum of LEs \cite{temam}. Let $|y_1\wedge\cdots\wedge y_k|$ denote the volume of the parallelepiped whose edges coincide with vectors $y_1,\ldots,y_k\in\mathbb R^n$. If each $y_i$ is a time-evolving tangent vector, the volume of the parallelepiped generally grows or shrinks exponentially in time, so we can define an average exponential rate analogously to \cref{eq:LE x0 y0}. Maximizing this rate over all initial directions $y_i(0)$ of the tangent vectors gives \cite{temam}
\begin{equation}
\label{eq:sumdef}
\Mu_k(x_0) = \sup_{|y_1\wedge \dots \wedge y_k|_{t=0}=1}\limsup_{t\to\infty} \frac{1}{t}\log{|y_1(t)\wedge\dots\wedge y_k(t)|},
\end{equation}
where each $y_i(t)$ evolves as in~\cref{eq:y ode}. The value $\Mu_k(x_0)$ is the maximum time-averaged growth rate of $k$-dimensional volumes in the tangent space for the trajectory $x(t)$ starting at $x_0$. This maximum growth rate will be attained by almost every choice of initial tangent vectors, provided that certain multiplicative ergodic theorems apply \cite{pikovsky2016lyapunov}. 

The set of all `$k$-blades' $y_1\wedge\cdots\wedge y_k$ span a vector space called the exterior product space of $k$-vectors on $\mathbb{R}^n$. Properties of exterior products that we require are reviewed in \cref{sec:compounds}. The space of $k$-vectors has dimension $\bn{n}{k}=\tfrac{n!}{k!(n-k)!}$, so we can identify each $k$-blade $y_1\wedge\cdots\wedge y_k$ with a vector denoted by $y^k\in\mathbb{R}^{\bn{n}{k}}$, whose Euclidean norm $|y^k|$ gives the volume of the parallelepiped with edges $y_1,\ldots,y_k$. For time-dependent $k$-blades where each $y_i(t)$ is a tangent vector that satisfies the ODE~\cref{eq:y ode}, the corresponding ODE that $y^k$ satisfies is
\begin{equation}
    \ddt{}y^k(t) = Df^{[k]}(x(t))\,y^k(t), \qquad y^k(0)=y^k_0. 
    \label{eq:yk ode}
\end{equation}
Here $Df^{[k]}$ denotes the $k^\mathrm{th}$ additive compound matrix induced by the matrix $Df$. This compound is a linear transformation acting on the space $\mathbb{R}^{\bn{n}{k}}$ of $k$-vectors, induced by the linear transformation $Df$ on $\mathbb{R}^n$, and it can be computed explicitly as a $\bn{n}{k}\times \bn{n}{k}$ matrix. Additive (and multiplicative) compound matrices are reviewed in \cref{sec:compounds}. The $y^k$ dynamics~\cref{eq:yk ode} follow from the $y$ dynamics~\cref{eq:y ode} after applying the product and chain rule to $y_1\wedge\dots\wedge y_k$ and using the second part of \cref{thm:addcompoundprops}. The expression~\cref{eq:sumdef} for $\Mu_k$ can be equivalently stated using $y^k$, yielding the following \cref{def:Sigmak} that we choose for~$\Mu_k$.

\begin{defn}
\label{def:Sigmak}
For $x(t)$ evolving by the ODE \cref{eq:dynsys}, the Lyapunov exponents are defined such that the \emph{sum of the $k$ leading Lyapunov exponents} is
\begin{equation}
\Mu_k(x_0) = \sup_{y^k_0\in\mathbb{S}^{{\bn{n}{k}}-1}}\limsup_{t\to\infty} \frac{1}{t}\log{|y^k(t)|},
\label{eq:Sigmak}
\end{equation}
where $y^k(t)$ evolves by \cref{eq:yk ode}.
\end{defn}

Having defined $\Mu_k$, we can define the LE spectrum $\mu_1\ge\mu_2\ge\cdots\ge\mu_n$ such the $\Mu_k$ are necessarily LE sums. When $k=1$ we already have $\mu_1=\Mu_1$ since \cref{def:Sigmak} reduces to \cref{def:leadingle} when $y^k=y$ and $Df^{[k]}=Df$. For $k\ge2$, the LE $\mu_k$ can be defined recursively by
\begin{equation}
    \mu_k=\Mu_k-\Mu_{k-1},
\end{equation}
so indeed $\Mu_k=\sum_{i=1}^k \mu_i$. There are equivalent ways to define the $\mu_k$ directly using singular values, and then define the $\Mu_k$ as their sums \citep{pikovsky2016lyapunov}, but for our purposes it is simpler to define $\Mu_k$ directly by~\cref{eq:Sigmak}.

For systems with attractors, a primary use of LE sums is to define and study the dimension of an attractor. A notion of dimension for fractal sets in general is the Hausdorff dimension, which can be very hard to approximate for fractals that arise from dynamical systems. However, Hausdorff dimension is bounded above by Lyapunov dimension~\cite{eden1991local}, which is given by the Kaplan--Yorke formula \citep{kaplan1979chaotic} in \cref{def:ldim}. (See \citep{kuznetsov2020attractor} for equivalent definitions of Lyapunov dimension.) Our methods give upper bounds on Lyapunov dimension of global attractors, which also are upper bounds on Hausdorff dimension of global attractors. 
\begin{defn}
\label{def:ldim}
Let $x(t)$ evolve by the ODE \cref{eq:dynsys}, and let $\mathcal{B}$ be forward invariant. Define $j$ as the smallest integer such that $\sup_{x_0\in\mathcal{B}}\Mu_{j+1}(x_0)<0$, and define the \emph{global Lyapunov dimension} over $\mathcal{B}$ as
\begin{equation}
\label{eq:dL}
d_L = j + \sup_{x_0\in\mathcal{B}} \left(\frac{\Mu_j(x_0)}{-\mu_{j+1}(x_0)}\right).
\end{equation}
\end{defn}

For particular ODEs, the Kaplan--Yorke formula is usually evaluated numerically along a chaotic trajectory, rather than being maximized over all trajectories. This approximates the fractal dimension of the chaotic attractor, rather than the global attractor. Global Lyapunov dimension pertains to the latter, and the so-called Eden conjecture \cite{eden1989abstract, leonov1993eden} predicts that the supremum in~\cref{eq:dL} is attained on an equilibrium or periodic orbit, not a chaotic orbit. \Cref{thm:dimlemma} states that $j\le d_L<j+1$, which we prove for completeness, and gives a rearranged formula for $d_L$ that we use in \cref{sec:sphere,sec:shifted}.

\begin{lem}
\label{thm:dimlemma}
The Lyapunov dimension $d_L$ has the following properties.
\begin{enumerate}
\item For the integer $j$ in \cref{def:ldim}, Lyapunov dimension is bounded by $j\le d_L<j+1$.
\item The expression~\cref{eq:dL} for $d_L$ is equivalent to
\begin{equation}
\label{eq:dL restatement}
d_L = \inf_{B\in[0,1)}(j+B) \quad \text{s.t.}\quad 
B\left[\Mu_j(x_0)-\Mu_{j+1}(x_0)\right]-\Mu_j(x_0)\ge 0~~\forall~x_0\in\mathcal{B}.
\end{equation}
\end{enumerate}
\end{lem}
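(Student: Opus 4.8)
The plan is to derive both parts from one preliminary estimate: $-\mu_{j+1}(x_0)$ is bounded below by a fixed positive constant, uniformly over $x_0\in\mathcal B$. Write $\delta:=-\sup_{x_0\in\mathcal B}\Mu_{j+1}(x_0)$, which is positive by the choice of $j$, so $\Mu_{j+1}(x_0)\le-\delta$ for every $x_0$. Because $\mu_1(x_0)\ge\mu_2(x_0)\ge\cdots$, the partial sums $k\mapsto\Mu_k(x_0)$ are concave with $\Mu_0(x_0)=0$; evaluating this concavity at $k=j$, between the nodes $0$ and $j+1$, gives $\Mu_j(x_0)\ge\tfrac{j}{j+1}\Mu_{j+1}(x_0)$, hence
\[
\mu_{j+1}(x_0)=\Mu_{j+1}(x_0)-\Mu_j(x_0)\le\frac{1}{j+1}\Mu_{j+1}(x_0)\le-\frac{\delta}{j+1}<0.
\]
In particular each ratio $\Mu_j(x_0)/(-\mu_{j+1}(x_0))$ in \cref{eq:dL} is finite, so $d_L$ is well defined. (I take $j\le n-1$ below; the borderline $j=n$ is covered by the conventions $\Mu_{n+1}=\mu_{n+1}=-\infty$, forcing $d_L=n$.)

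For part~1, the bound $d_L\ge j$ follows from minimality of $j$: if $j\ge1$ then $j-1$ violates the defining inequality, so $\sup_{x_0}\Mu_j(x_0)\ge0$ (and this is trivial for $j=0$, where $\Mu_0\equiv0$); picking $x_0$ with $\Mu_j(x_0)\ge-\varepsilon$ gives $\Mu_j(x_0)/(-\mu_{j+1}(x_0))\ge-\varepsilon(j+1)/\delta$, and $\varepsilon\to0$ shows the supremum in \cref{eq:dL} is $\ge0$. For $d_L<j+1$ I would bound that ratio uniformly below $1$. Pointwise, $\Mu_j(x_0)/(-\mu_{j+1}(x_0))<1$ is equivalent to $\Mu_{j+1}(x_0)<0$, which holds. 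For uniformity, note $-\mu_{j+1}(x_0)=\Mu_j(x_0)-\Mu_{j+1}(x_0)\ge\Mu_j(x_0)+\delta$; where $\Mu_j(x_0)\le0$ the ratio is $\le0$, and where $\Mu_j(x_0)>0$, monotonicity of $t\mapsto t/(t+\delta)$ and finiteness of $\overline{\Mu}_j:=\sup_{x_0\in\mathcal B}\Mu_j(x_0)$ give
\[
\frac{\Mu_j(x_0)}{-\mu_{j+1}(x_0)}\le\frac{\Mu_j(x_0)}{\Mu_j(x_0)+\delta}\le\frac{\overline{\Mu}_j}{\overline{\Mu}_j+\delta}<1.
\]
Hence the supremum in \cref{eq:dL} lies in $[0,1)$, so $j\le d_L<j+1$.

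For part~2, fix $B\in[0,1)$ and $x_0\in\mathcal B$. Since $\Mu_j(x_0)-\Mu_{j+1}(x_0)=-\mu_{j+1}(x_0)>0$ by the preliminary estimate, the constraint in \cref{eq:dL restatement} is equivalent to $B\ge\Mu_j(x_0)/(-\mu_{j+1}(x_0))$; imposing it for all $x_0\in\mathcal B$ is equivalent to $B\ge S$, where $S:=\sup_{x_0\in\mathcal B}\Mu_j(x_0)/(-\mu_{j+1}(x_0))$. Part~1 gives $S\in[0,1)$, so the feasible set for $B$ is the nonempty interval $[S,1)$, on which $j+B$ is minimized at $B=S$; comparing with \cref{eq:dL}, this minimum is $j+S=d_L$.

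The step I expect to be the main obstacle is the strict inequality $d_L<j+1$. Unlike $d_L\ge j$ it is not purely formal: it uses both the uniform lower bound $\delta>0$ on $-\Mu_{j+1}$ extracted from the definition of $j$, and finiteness of $\overline{\Mu}_j=\sup_{x_0}\Mu_j(x_0)$, which is the standing assumption that the LE-sum suprema under study are finite. Without the latter the ratio could approach $1$ along a family of trajectories with $\Mu_j\to\infty$, and only the non-strict bound $d_L\le j+1$ would survive. Everything after the preliminary estimate $\mu_{j+1}(x_0)\le-\delta/(j+1)$ is elementary algebra.
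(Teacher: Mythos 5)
Your proof is correct and follows essentially the same route as the paper: extract a uniform negative upper bound on $\mu_{j+1}$ from the definition of $j$, deduce $j\le d_L<j+1$ from $\sup_{x_0}\Mu_j\ge0$ and a bound on the ratio $\Mu_j/(-\mu_{j+1})$, and obtain part 2 by rewriting the supremum as an infimum over $B$ after rearranging with $\Mu_j-\Mu_{j+1}>0$. The only difference is that you are somewhat more careful at the strict inequality $d_L<j+1$, making the pointwise estimate $\mu_{j+1}\le\Mu_{j+1}/(j+1)$ and the uniformity via finiteness of $\sup_{x_0}\Mu_j$ explicit, where the paper's displayed pointwise inequality leaves these points implicit.
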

\begin{proof}
The definition of $j$ implies that there exists $\varepsilon>0$ such that
\begin{equation}
\Mu_{j+1}(x_0)\le -\varepsilon \quad\text{and} \quad -\mu_{j+1}(x_0)\ge \varepsilon\quad\text{for all}~x_0\in\mathcal{B},
\end{equation}
and that $\sup_{x_0\in\mathcal{B}}\Mu_j(x_0)\ge 0$. From these facts it follows that the supremum in the definition~\cref{eq:dL} of $d_L$ cannot be negative, which gives the lower bound $j\le d_L$ in the first part of the lemma. For the upper bound, note that 
\begin{equation}
\label{eq:dL upper bound}
\frac{\Mu_j(x_0)}{-\mu_{j+1}(x_0)}\le \frac{-\mu_{j+1}(x_0)-\varepsilon}{-\mu_{j+1}(x_0)} < 1,
\end{equation}
which gives $d_L<j+1$.

For the second part of the lemma, note that the supremum in the definition~\cref{eq:dL} of $d_L$ is equivalent to
\begin{equation}
\label{eq:dL lemma pf}
\inf B \quad \text{s.t.}\quad \frac{\Mu_j(x_0)}{-\mu_{j+1}(x_0)}\le B~~\forall~x_0\in\mathcal{B}.
\end{equation}
Since $-\mu_{j+1}=\Mu_j-\Mu_{j+1}>0$, rearranging the constraint in~\cref{eq:dL lemma pf} gives the constraint in~\cref{eq:dL restatement} as an equivalent form. This infimum is in $[0,1)$ and so is unchanged by the $B\in[0,1)$ constraint in \cref{eq:dL restatement}. Thus expression~\cref{eq:dL restatement} for $d_L$ is equivalent to~\cref{eq:dL}.
\end{proof}

\begin{rmk}
    \label{rmk:dL upper bound}
    Combining both parts of \cref{thm:dimlemma} implies that, if we can find $B\in\mathbb{R}$ and $k\geq j$ where
    \begin{equation}
    \label{eq:dL rmk inequality}
B\left[\Mu_k(x_0)-\Mu_{k+1}(x_0)\right]-\Mu_k(x_0)\ge 0
\end{equation}
for all $x_0\in\mathcal{B}$, then $d_L\leq k+B$. Note that~\cref{eq:dL rmk inequality} cannot hold for all $x_0$ unless $B\ge0$.
\end{rmk}

\subsection{Sphere projection approach}
\label{sec:sphere}

Our first approach to bounding the LE sums $\Mu_k$ uses the formulation~\cref{eq:Phi-opt} for bounding infinite-time averages. This is a generalization of the approach in \cite{oeri2023convex}, which was described only for the leading LE---i.e., the $k=1$ case. First note that the time average in the \cref{def:Sigmak} for $\Mu_k$ can be expressed as a time integral along trajectories in the augmented state space for $(x,y^k)$:
\begin{align}
\limsup_{t\to\infty}\frac{1}{t} \log{|y^k(t)|} 
&= \limsup_{T\to\infty}\frac{1}{T}\int_0^T\ddt{}{\log{|y^k(t)|}}\,\mathrm{d}t \\ 
&= \limsup_{T\to\infty}\frac{1}{T}\int_0^T \frac{(y^k)^\mathsf{T}\ddt{}y^k}{|y^k|^2}\,\mathrm{d}t\\
&= \limsup_{T\to\infty}\frac{1}{T}\int_0^T
 \frac{(y^k)^\mathsf{T}Df^{[k]}(x)y^k}{|y^k|^2}\,\mathrm{d}t, \label{eq:Phik y}
\end{align}
where $(y^k)^\mathsf{T}$ is the transpose of the vector $y^k$. The formulation~\cref{eq:Phi-opt} for bounding time averages could be applied to the quantity in \cref{eq:Phik y} if each $(x,y^k)$ were to remain bounded for $t\ge0$, but $y^k(t)$ will grow without bound when $\Mu_k$ is positive. Instead, the tangent space dynamics can be captured with bounded variables by projecting $y^k$ onto a sphere, by introducing $z^k=y^k/|y^k|$. Expressing the right-hand side of  \cref{eq:Phik y} in terms of $z^k$ and then maximizing both sides over the initial tangent space directions $z^k_0$ gives
\begin{equation}
   \Mu_k(x_0) = \sup_{z_0^k\in\mathbb{S}^{\bn{n}{k}-1}}\overline{\Phi_k},
   \label{eq:Sigmak Phik}
\end{equation}
where the overline denotes a time average of
\begin{equation}
 \Phi_k(x,z^k) = (z^k)^\mathsf{T} Df^{[k]}(x)z^k
 \label{eq:Phik}
\end{equation}
along trajectories in the $(x,z^k)$ state space. The ODE for $z^k$ follows from the ODE~\cref{eq:yk ode} for $y^k$, and it couples with the ODE~\cref{eq:dynsys} for $x$ to form the augmented system
\begin{equation}
\label{eq:projectedaugmented}
\ddt{} \begin{bmatrix}x\\z^k\end{bmatrix} =\begin{bmatrix}f(x)\\\ell_k(x,z^k)\end{bmatrix}
\end{equation}
on state space $\mathcal{B}\times\mathbb{S}^{\bn{n}{k}-1}$, where
\begin{equation}
\label{eq:ell}
\ell_k(x,z^k)= Df^{[k]}(x) z^k - \Phi_k(x,z^k)z^k.
\end{equation}
Expressions for $\Mu_k$ in terms of the projected tangent space dynamics~\cref{eq:projectedaugmented} have often appeared in the study of stochastic dynamics, where they are called Furstenberg–Khasminskii formulas \cite[chapter 6]{Arnold_1998}. In the stochastic case, the formula~\cref{eq:Sigmak Phik} differs by additional terms in $\Phi_k$, and the averaging is with respect to stationary measures.

Auxiliary function methods for bounding time averages can be applied to $\Mu_k$ by using its representation~\cref{eq:Sigmak Phik} as a time average in the augmented state space for $(x,z^k)$. In particular, we directly apply the formula~\cref{eq:Phi-opt} with state variable $X=(x,z^k)$ on domain $\mathcal{B}\times\mathbb{S}^{\bn{n}{k}-1}$ and with $F(X)$ being the right-hand side of~\cref{eq:projectedaugmented}. The resulting inequality is the first part of \cref{thm:projectionmethod} below, for which no further proof is needed. If $F\in C^1$ and the domain is compact, then~\cref{eq:Phi-opt} holds with equality, as proved in \cite{tobasco2018optimal}. This yields the second part of \cref{thm:projectionmethod} since assuming  $f\in C^2$ ensures $F\in C^1$. The $k=1$ special case of \cref{thm:projectionmethod} is proposition 1 in~\cite{oeri2023convex}.
\begin{prop}
\label{thm:projectionmethod}
Let $x(t)$ evolve by the ODE \cref{eq:dynsys}, where $f\in C^1(\mathcal{B},\mathbb{R}^n)$ and $\mathcal{B}$ is forward invariant. Let $ \Phi_k(x,z^k)$ and $\ell_k(x,z^k)$ be as defined by \cref{eq:Phik} and \cref{eq:ell}, respectively.
\begin{enumerate}
\item Suppose each trajectory $x(t)$ in $\mathcal{B}$ is bounded for $t\ge0$. For each $k\in\{1,\ldots,n\}$, the maximum leading LE sum $\Mu_k$ among trajectories in $\mathcal{B}$ is bounded above by
\begin{equation}
\label{eq:Sigmak C1 ineq}
\sup_{x_0\in \mathcal{B}} \Mu_k(x_0) \leq \inf_{V\in C^1\left( \mathcal{B}\times\mathbb{R}^{\bn{n}{k}}\right)}\sup_{\substack{x\in  \mathcal{B}\hspace{13pt}\\z^k\in\mathbb{S}^{{\bn{n}{k}}-1}}}\left[\Phi_k + f\cdot D_xV+\ell_k\cdot D_{z^k}V\right],
\end{equation}
where $D_xV$ and $D_{z^k}V$ denote the gradients of $V(x,z^k)$ with respect to $x$ and $z^k$, respectively.
    \item Suppose $\mathcal{B}$ is compact and $f\in C^2(\mathcal{B},\mathbb{R}^n)$. Then~\cref{eq:Sigmak C1 ineq} holds with equality.
\end{enumerate}
\end{prop}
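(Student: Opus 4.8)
The plan is to obtain \cref{thm:projectionmethod} as a direct corollary of the general time-average bound \cref{eq:Phi-opt}, applied in the augmented state space. First I would set $X=(x,z^k)$, $\Omega=\mathcal{B}\times\mathbb{S}^{n_k-1}$, and $F(X)$ equal to the right-hand side of~\cref{eq:projectedaugmented}, i.e.\ $F(x,z^k)=(f(x),\ell_k(x,z^k))$. Then I would observe that the quantity we are averaging, $\Phi_k(x,z^k)=(z^k)^\mathsf{T}Df^{[k]}(x)z^k$, is exactly the integrand identified in~\cref{eq:Phik y} once the substitution $z^k=y^k/|y^k|$ is made, so that by the derivation leading to~\cref{eq:Sigmak Phik} we have $\Mu_k(x_0)=\sup_{z^k_0\in\mathbb{S}^{n_k-1}}\overline{\Phi_k}$ along trajectories of~\cref{eq:projectedaugmented}. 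For the first part, the hypotheses give that each $x(t)$ stays in the forward-invariant set $\mathcal{B}$ and is bounded for $t\ge0$; since $|z^k(t)|\equiv1$ by construction of $\ell_k$, the augmented trajectory $(x(t),z^k(t))$ is bounded forward in time, so~\cref{eq:Phi-ineq} and hence~\cref{eq:Phi-opt} apply with $\Phi=\Phi_k$. Writing $D_XV=(D_xV,D_{z^k}V)$ so that $F\cdot D_XV=f\cdot D_xV+\ell_k\cdot D_{z^k}V$, and taking the supremum over $x_0\in\mathcal{B}$ on the left, yields precisely~\cref{eq:Sigmak C1 ineq}. One minor point to address is that in~\cref{eq:Phi-opt} the infimum is over $V\in C^1(\Omega)$ with $\Omega=\mathcal{B}\times\mathbb{S}^{n_k-1}$, whereas~\cref{eq:Sigmak C1 ineq} writes $V\in C^1(\mathcal{B}\times\mathbb{R}^{n_k})$; these give the same bound because any $C^1$ function on the sphere extends to a $C^1$ function on $\mathbb{R}^{n_k}$ (e.g.\ by radial extension), and restricting to such extensions cannot decrease the infimum while the supremum on the right is only ever evaluated on $z^k\in\mathbb{S}^{n_k-1}$, so the two formulations coincide.

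For the second part I would invoke the sharpness result of \cite{tobasco2018optimal}: when $\Omega$ is compact and $F\in C^1(\Omega,\mathbb{R}^{\dim\Omega})$, the inequality~\cref{eq:Phi-opt} is in fact an equality. So it suffices to check that, under the stronger hypothesis $f\in C^2(\mathcal{B},\mathbb{R}^n)$ with $\mathcal{B}$ compact, the augmented vector field $F=(f,\ell_k)$ is $C^1$ on the compact set $\mathcal{B}\times\mathbb{S}^{n_k-1}$. Compactness of the product is immediate since $\mathbb{S}^{n_k-1}$ is compact. For the regularity, $f\in C^2$ implies $Df\in C^1$, and the additive compound $Df^{[k]}$ is built from $Df$ by polynomial (in fact multilinear) operations on its entries, hence $Df^{[k]}(x)$ depends $C^1$ on $x$; then $\Phi_k(x,z^k)=(z^k)^\mathsf{T}Df^{[k]}(x)z^k$ is $C^1$ in $(x,z^k)$ as a product of $C^1$ functions, and $\ell_k(x,z^k)=Df^{[k]}(x)z^k-\Phi_k(x,z^k)z^k$ is likewise $C^1$. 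Therefore $F\in C^1(\mathcal{B}\times\mathbb{S}^{n_k-1})$, the cited theorem applies, and~\cref{eq:Sigmak C1 ineq} holds with equality.

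I do not expect a serious obstacle here, since the proposition is essentially a bookkeeping exercise that repackages~\cref{eq:Phi-opt} in the augmented coordinates; the content has already been front-loaded into the derivation of~\cref{eq:Sigmak Phik} and into the cited sharpness theorem. The one place that warrants care — and the closest thing to a genuine step — is the reconciliation of the two function spaces for $V$ (the sphere $\mathbb{S}^{n_k-1}$ versus the ambient $\mathbb{R}^{n_k}$) and, relatedly, making sure that restricting $z^k$ to the unit sphere in the supremum is consistent with $\mathbb{S}^{n_k-1}$ being the correct state space for the projected dynamics~\cref{eq:projectedaugmented}, i.e.\ that $\ell_k$ is tangent to the sphere so that $\mathbb{S}^{n_k-1}$ is genuinely forward invariant. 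That last fact follows by computing $\tfrac{\mathrm{d}}{\mathrm{d}t}|z^k|^2=2(z^k)^\mathsf{T}\ell_k=2\Phi_k(1-|z^k|^2)$, which vanishes on $|z^k|=1$; I would state this briefly to justify that the augmented system is well posed on the compact manifold-with-boundary $\mathcal{B}\times\mathbb{S}^{n_k-1}$ before applying the two cited results.
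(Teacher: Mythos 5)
Your proposal is correct and follows essentially the same route as the paper: apply the general time-average bound \cref{eq:Phi-opt} with $X=(x,z^k)$, $\Omega=\mathcal{B}\times\mathbb{S}^{n_k-1}$, $F$ the right-hand side of \cref{eq:projectedaugmented} and $\Phi=\Phi_k$, using the representation \cref{eq:Sigmak Phik} for part 1, and invoke the sharpness result of \cite{tobasco2018optimal} for part 2 after checking that $f\in C^2$ makes $F\in C^1$ on the compact augmented domain. The extra details you supply (tangency of $\ell_k$ to the sphere via $(z^k)^\mathsf{T}\ell_k=\Phi_k(1-|z^k|^2)$, and the equivalence of taking $V\in C^1(\mathcal{B}\times\mathbb{R}^{n_k})$ versus $C^1$ on the sphere) are consistent with, and merely flesh out, the argument the paper leaves implicit.
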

The optimization problem on the right-hand side of~\cref{eq:Sigmak C1 ineq} may be intractable, but it can be relaxed into computationally tractable optimization problems that also yield upper bounds on $\Mu_k$. Relaxations using SOS polynomials are given in \cref{sec:sos} below. 
 
The Lyapunov dimension $d_L$ can be bounded above using similar ideas. One way is to apply upper bounds $\Mu_k\le B_k$ that have already been found using the right-hand side of \cref{eq:Sigmak C1 ineq} or its SOS relaxations. If such bounds are computed with $B_k\ge0$ but $B_{k+1}<0$ for some integer $k$, then \cref{thm:dimension_separate} below gives an upper bound on $d_L$. To apply this result, one does not need to know whether $k$ is the minimal integer $j$ appearing in \cref{def:ldim} for $d_L$. It is sufficient that $B_{k+1}<0$, which implies $k\ge j$.

\begin{prop}
    \label{thm:dimension_separate}
Suppose that, for some $k\in\{1,\ldots,n-1\}$ and values $B_k\ge0$ and $B_{k+1}<0$, 
\begin{equation}
\label{eq:Bk assumed}
 \sup_{x_0\in\mathcal{B}}\Mu_k(x_0)\le B_k \quad \text{and}\quad\sup_{x_0\in\mathcal{B}} \Mu_{k+1}(x_0)\le B_{k+1}.
\end{equation}
Then, the global Lyapunov dimension over $\mathcal{B}$ (\cref{def:ldim}) is bounded above by
    \begin{equation}
    \label{eq:dimbound1}
        d_L\leq k+\frac{B_k}{B_k-B_{k+1}}.
    \end{equation}
\end{prop}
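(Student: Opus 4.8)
The plan is to apply \cref{rmk:dL upper bound}. That remark reduces this proposition to two checks: (a) the given index $k$ satisfies $k\ge j$, where $j$ is the integer appearing in \cref{def:ldim}; and (b) the constant $B:=B_k/(B_k-B_{k+1})$ makes the pointwise inequality~\cref{eq:dL rmk inequality} hold throughout $\mathcal B$. Granting (a) and (b), \cref{rmk:dL upper bound} immediately gives $d_L\le k+B=k+B_k/(B_k-B_{k+1})$, which is exactly~\cref{eq:dimbound1}.

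For (a), I would note that $j$ is by definition the smallest integer with $\sup_{x_0\in\mathcal B}\Mu_{j+1}(x_0)<0$. The hypothesis $\sup_{x_0\in\mathcal B}\Mu_{k+1}(x_0)\le B_{k+1}<0$ shows that $k$ is itself one of the integers competing in that minimization, so $j\le k$.

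For (b), first observe that $B_k\ge 0>B_{k+1}$ forces $B_k-B_{k+1}>0$, so $B$ is well defined; moreover $0\le B_k<B_k-B_{k+1}$, hence $0\le B<1$ and in particular $B-1<0$. Now I would rewrite the left-hand side of~\cref{eq:dL rmk inequality} as $(B-1)\Mu_k(x_0)-B\,\Mu_{k+1}(x_0)$. From $\Mu_k(x_0)\le B_k$ together with $B-1<0$ one gets $(B-1)\Mu_k(x_0)\ge(B-1)B_k$, and from $\Mu_{k+1}(x_0)\le B_{k+1}$ together with $B\ge0$ one gets $-B\,\Mu_{k+1}(x_0)\ge-B\,B_{k+1}$. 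Adding these and then substituting the definition of $B$,
\[
(B-1)\Mu_k(x_0)-B\,\Mu_{k+1}(x_0)\;\ge\;(B-1)B_k-B\,B_{k+1}\;=\;B(B_k-B_{k+1})-B_k\;=\;0 ,
\]
so~\cref{eq:dL rmk inequality} holds for every $x_0\in\mathcal B$, which establishes (b) and completes the argument via \cref{rmk:dL upper bound}.

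There is no substantive analytic obstacle here: the real content is already packaged in \cref{thm:dimlemma} and \cref{rmk:dL upper bound}, and what remains is purely algebraic. The only thing requiring care is bookkeeping of signs — verifying $B_k-B_{k+1}>0$, noting that $B\in[0,1)$ so that multiplying the bound $\Mu_k(x_0)\le B_k$ by $B-1$ reverses the inequality, and confirming $k\ge j$ so that the remark is indeed applicable.
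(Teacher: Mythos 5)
Your proposal is correct and follows essentially the same route as the paper: both verify $k\ge j$ from $B_{k+1}<0$ and then apply \cref{rmk:dL upper bound} with $B=B_k/(B_k-B_{k+1})$, checking the pointwise inequality~\cref{eq:dL rmk inequality} by elementary manipulation of the assumed bounds. The only difference is cosmetic — you substitute $B$ and bound $(B-1)\Mu_k-B\,\Mu_{k+1}$ termwise, whereas the paper cross-multiplies the two bounds and divides by $B_k-B_{k+1}$ at the end — but the algebra is equivalent.
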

\begin{proof}
The meaning of $j$ in \cref{def:ldim} and the fact that $B_{k+1}<0$ together imply $k\ge j$. Due to \cref{rmk:dL upper bound}, it suffices for the inequality \cref{eq:dL rmk inequality} to hold for all $x_0$ with $B=\frac{B_k}{B_k-B_{k+1}}$.
    For any $x_0\in \mathcal{B}$, the assumptions give $\Mu_k(x_0)\le B_k$ and $0<-B_{k+1}\le-\Mu_{k+1}(x_0)$, so
    \begin{equation}
    \label{eq:proof inequality}
  -B_{k+1}\Mu_k(x_0)\le   -B_k \Mu_{k+1}(x_0),
    \end{equation}
    and
\begin{equation}
B_k\Mu_k(x_0) -B_{k+1}\Mu_k(x_0) \le  B_k\Mu_k(x_0)-B_k \Mu_{k+1}(x_0).
\end{equation}
Dividing by the positive quantity $B_k-B_{k+1}$ and rearranging gives
    \begin{equation}
    \label{eq:prop_intermediate}
            \frac{B_k}{B_k-B_{k+1}}\left[\Mu_k(x_0)-\Mu_{k+1}(x_0)\right]-\Mu_k(x_0)\geq 0,
    \end{equation}
  which is the required inequality~\cref{eq:dL rmk inequality}.
\end{proof}

The upper bound on Lyapunov dimension given by \cref{thm:dimension_separate} cannot be sharp for certain systems, even if $B_k$ and $B_{k+1}$ are each sharp bounds in~\cref{eq:Bk assumed}. The reason is that the inequality \cref{eq:proof inequality}, when both sides are maximized over $x_0$, is an equality only if $\Mu_k(x_0)$ and $\Mu_{k+1}(x_0)$ attain their maxima on the same orbits. This is not true in general; see \cref{sec:hamiltonian} for a counterexample. We thus give a second formulation for upper bounds on $d_L$ that is generally sharper than \cref{thm:dimension_separate}. This formulation directly considers the ratio in the Kaplan--Yorke formula~\cref{eq:dL}, rather than separately bounding the $\Mu_k$ and $\Mu_{k+1}$ terms in that ratio. In particular, we consider the $x$ dynamics together with the projected dynamics of $k$-vectors and $(k+1)$-vectors,
\begin{equation}
\label{eq:augmenteddim1}
\ddt{} \begin{bmatrix}x\\z^k\\z^{k+1}\end{bmatrix} = \begin{bmatrix}f(x)\\\ell_k(x,z^k)\\\ell_{k+1}(x,z^{k+1})\end{bmatrix},
\end{equation}
where $\ell_k$ and $\ell_{k+1}$ are defined by \cref{eq:ell}. The augmented state space of this system is $(x,z^k,z^{k+1})\in\mathcal{B}\times\mathbb{S}^{\bn{n}{k}-1}\times\mathbb{S}^{\bn{n}{k+1}-1}$. The constraint in \cref{eq:dL restatement} from \cref{thm:dimlemma} can then be expressed in terms of trajectories of the ODE system \cref{eq:augmenteddim1}. For this we define
\begin{equation}
\label{eq:PsiB}
\Psi^B(x,z^k,z^{k+1}) = (1-B)\Phi_k(x,z^k)+B\,\Phi_{k+1}(x,z^{k+1}),
\end{equation}
and we consider the condition
\begin{equation}
\overline{\Psi}^B(x_0,z^k_0,z^{k+1}_0)\leq 0 
~~\text{for all}~
(x_0,z^k_0,z^{k+1}_0)\in\mathcal{B}\times\mathbb{S}^{\bn{n}{k}-1}\times\mathbb{S}^{\bn{n}{k+1}-1},
\label{eq:PsiB cond}
\end{equation}
where the time average denoted by the overbar is along trajectories of \cref{eq:augmenteddim1}. Condition~\cref{eq:PsiB cond} is equivalent to the condition in \cref{eq:dL restatement} of \cref{thm:dimlemma}, as explained below in the proof of \cref{thm:dimcombined}. The condition~\cref {eq:PsiB cond} can be enforced using the general formulation~\cref{eq:Phi-opt} for bounding time averages, applied to the augmented system \cref{eq:augmenteddim1}. This yields the first part of \cref{thm:dimcombined}, whose sharpness is addressed by its second part. \Cref{thm:dimcombined} can be applied only when the LE sum $\Mu_{k+1}$ for certain $k$ is known to be negative on every trajectory. Such negativity can be confirmed by using \cref{eq:Sigmak C1 ineq}, or its SOS relaxation that we introduce in \cref{sec:sos}, to obtain a negative upper bound on $\Mu_{k+1}$.
\begin{prop}
\label{thm:dimcombined}
Let $x(t)$ evolve by the ODE \cref{eq:dynsys}, where $f\in C^1(\mathcal{B},\mathbb{R}^n)$ and $\mathcal{B}$ is forward invariant. Let $ \Phi_k(x,z^k)$ and $\ell_k(x,z^k)$ be as defined by~\cref{eq:Phik} and \cref{eq:ell}, and likewise for  $ \Phi_{k+1}(x,z^{k+1})$ and $\ell_{k+1}(x,z^{k+1})$. 
\begin{enumerate}
\item  Suppose each trajectory $x(t)$ in $\mathcal{B}$ is bounded for $t\ge0$. Let the positive integer $k< n$ be such that $\sup_{x_0\in\mathcal{B}}\Mu_{k+1}(x_0)<0$. Then the global Lyapunov dimension $d_L$ over $\mathcal{B}$ is bounded above by
\begin{multline}
\label{eq:dL bounds 1}
d_L\le \inf\limits_{\substack{B\in[0,1)\\V\in C^1~~}}(k+B) 
 \quad\text{s.t.}\quad \begin{array}[t]{l}
\Psi^B + f\cdot D_xV+\ell_k\cdot D_{z^k}V +\ell_{k+1}\cdot D_{z^{k+1}}V\le0 \\[4pt]
\text{for all}~~ 
(x,z^k,z^{k+1})\in\mathcal{B}\times\mathbb{S}^{\bn{n}{k}-1}\times\mathbb{S}^{\bn{n}{k+1}-1},
\end{array}
\end{multline}
where $V:\mathcal{B}\times\mathbb{S}^{{\bn{n}{k}}-1}\times\mathbb{S}^{\bn{n}{k+1}-1}\to\mathbb{R}$, and $D_xV$, $D_{z^k}V$ and $D_{z^{k+1}}V$ denote the gradients of $V(x,z^k,z^{k+1})$ with respect to each argument.
\item Suppose $\mathcal{B}$ is compact and $f\in C^2(\mathcal{B},\mathbb{R}^n)$. Let $j$ be the minimum admissible $k$, as in \cref{def:ldim} for $d_L$. Then, for all $\varepsilon>0$,
\begin{equation}
\label{eq:dL bounds 2}
d_L~-~\varepsilon\,\left|\sup_{x_0\in\mathcal{B}}\Mu_{j+1}(x_0)\right|^{-1}
\le j+B_\varepsilon\le  d_L,
\end{equation}
where
\begin{equation}
\label{eq:Beps}
B_\varepsilon = \inf\limits_{\substack{B\in[0,1)\\V\in C^1\hspace{7pt}}}B
\quad \text{s.t.}\quad \begin{array}[t]{l}
\Psi^B + f\cdot D_xV+\ell_j\cdot D_{z^j}V +\ell_{j+1}\cdot D_{z^{j+1}}V\le \varepsilon \\[4pt]
\text{for all}~~ 
(x,z^k,z^{k+1})\in\mathcal{B}\times\mathbb{S}^{\bn{n}{k}-1}\times\mathbb{S}^{\bn{n}{k+1}-1}.
\end{array}
\end{equation}
\end{enumerate}
\end{prop}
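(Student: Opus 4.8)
The plan is to reduce both parts to a single identity: for every $x_0\in\mathcal B$ and $B\in[0,1]$,
\begin{equation*}
\sup_{\substack{z^k_0\in\mathbb S^{n_k-1}\\ z^{k+1}_0\in\mathbb S^{n_{k+1}-1}}}\overline{\Psi}^B(x_0,z^k_0,z^{k+1}_0)=(1-B)\,M_k(x_0)+B\,M_{k+1}(x_0),
\end{equation*}
where the overline is the time average along trajectories of~\cref{eq:augmenteddim1}; taking $k=j$ and requiring this quantity to be $\le0$ is exactly the equivalence of~\cref{eq:PsiB cond} with the constraint in~\cref{eq:dL restatement}. To prove the identity, observe that along a trajectory of~\cref{eq:augmenteddim1} started from $|z^k_0|=|z^{k+1}_0|=1$, the components $(x,z^k)$ and $(x,z^{k+1})$ recover the tangent $k$- and $(k+1)$-vectors $y^k(t),y^{k+1}(t)$ solving~\cref{eq:yk ode} (and its $(k+1)$-analogue), and the computation behind~\cref{eq:Phik y} gives $\tfrac1T\int_0^T\Phi_k\,\mathrm{d}t=\tfrac1T\log|y^k(T)|$, and likewise for $k{+}1$, so that by~\cref{eq:PsiB}
\begin{equation*}
\overline{\Psi}^B(x_0,z^k_0,z^{k+1}_0)=\limsup_{T\to\infty}\left[(1-B)\,\tfrac1T\log|y^k(T)|+B\,\tfrac1T\log|y^{k+1}(T)|\right].
\end{equation*}
Since $1-B,B\ge0$ and $\limsup$ is subadditive and positively homogeneous, this is $\le(1-B)\limsup_T\tfrac1T\log|y^k(T)|+B\limsup_T\tfrac1T\log|y^{k+1}(T)|$, and maximising over $z^k_0,z^{k+1}_0$ and invoking~\cref{eq:Sigmak} yields ``$\le$'' in the identity; the reverse inequality is the crux, discussed at the end.

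\textbf{Part 1.} Let $(B,V)$ be feasible in~\cref{eq:dL bounds 1}. Every trajectory of~\cref{eq:augmenteddim1} is bounded for $t\ge0$, since $z^k,z^{k+1}$ lie on spheres and $x$ is bounded by hypothesis, so~\cref{eq:Phi-ineq} applied to~\cref{eq:augmenteddim1} with $\Phi=\Psi^B$ and this $V$ gives $\overline{\Psi}^B\le\sup\bigl[\Psi^B+f\cdot D_xV+\ell_k\cdot D_{z^k}V+\ell_{k+1}\cdot D_{z^{k+1}}V\bigr]\le0$ for every initial condition. By the identity, $(1-B)M_k(x_0)+B\,M_{k+1}(x_0)\le0$ for all $x_0$, which rearranges to~\cref{eq:dL rmk inequality}. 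The hypothesis $\sup_{x_0}M_{k+1}(x_0)<0$ makes $k$ admissible in~\cref{def:ldim}, so $k\ge j$, and $B\ge0$; hence~\cref{rmk:dL upper bound} gives $d_L\le k+B$, and taking the infimum over feasible $(B,V)$ proves~\cref{eq:dL bounds 1}.

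\textbf{Part 2.} With $\mathcal B$ compact and $f\in C^2$, the right-hand side of~\cref{eq:augmenteddim1} is a $C^1$ vector field on the compact set $\mathcal B\times\mathbb S^{n_j-1}\times\mathbb S^{n_{j+1}-1}$, so~\cref{eq:Phi-opt} holds with equality for $\Phi=\Psi^B$. For the upper bound $j+B_\varepsilon\le d_L$: by~\cref{thm:dimlemma}, $B^\star:=d_L-j\in[0,1)$ is the smallest $B$ for which the constraint in~\cref{eq:dL restatement} holds (that feasible set of $B$ is of the form $[B^\star,1)$, by convexity in $B$ and boundedness of $M_j,M_{j+1}$ on compact $\mathcal B$), so $(1-B^\star)M_j(x_0)+B^\star M_{j+1}(x_0)\le0$ for all $x_0$; the ``$\le$'' half of the identity then gives $\sup_{X_0}\overline{\Psi}^{B^\star}\le0$, whence the equality version of~\cref{eq:Phi-opt} supplies, for each $\varepsilon>0$, some $V\in C^1$ with $\sup\bigl[\Psi^{B^\star}+f\cdot D_xV+\ell_j\cdot D_{z^j}V+\ell_{j+1}\cdot D_{z^{j+1}}V\bigr]\le\varepsilon$, so $(B^\star,V)$ is feasible in~\cref{eq:Beps} and $B_\varepsilon\le d_L-j$. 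For the lower bound: for any $(B,V)$ feasible in~\cref{eq:Beps}, the Part-1 argument gives $\overline{\Psi}^B\le\varepsilon$ for all initial conditions, so the identity gives $(1-B)M_j(x_0)+B\,M_{j+1}(x_0)\le\varepsilon$ for all $x_0$; rearranging, $M_j(x_0)\le\varepsilon-B\,\mu_{j+1}(x_0)$, and dividing by $-\mu_{j+1}(x_0)=M_j(x_0)-M_{j+1}(x_0)>0$,
\begin{equation*}
\frac{M_j(x_0)}{-\mu_{j+1}(x_0)}\le B+\frac{\varepsilon}{-\mu_{j+1}(x_0)}\le B+\varepsilon\left|\sup_{x_0}M_{j+1}(x_0)\right|^{-1},
\end{equation*}
the last step using $-\mu_{j+1}(x_0)\ge-M_{j+1}(x_0)\ge\bigl|\sup_{x_0}M_{j+1}(x_0)\bigr|$ when $M_j(x_0)\ge0$, the bound being trivial when $M_j(x_0)<0$. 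Taking the supremum over $x_0$ and using~\cref{eq:dL}, then minimising over feasible $(B,V)$, yields $j+B_\varepsilon\ge d_L-\varepsilon\bigl|\sup_{x_0}M_{j+1}(x_0)\bigr|^{-1}$, which with the upper bound is~\cref{eq:dL bounds 2}.

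\textbf{Main obstacle.} The one nontrivial step is the reverse inequality in the identity: for a fixed $x_0$ one must exhibit near-optimal tangent directions $z^k_0,z^{k+1}_0$ along which $\limsup_T[(1-B)\tfrac1T\log|y^k(T)|+B\tfrac1T\log|y^{k+1}(T)|]$ is close to $(1-B)M_k(x_0)+B\,M_{k+1}(x_0)$. The difficulty is that the $\limsup$ of a sum of two time-averages driven by the \emph{same} base trajectory but by \emph{independently} chosen tangent directions may be strictly smaller than the sum of the two $\limsup$s, so the near-maximizers for $M_k$ and for $M_{k+1}$ must be chosen to resonate. I expect to handle this by passing to the $\omega$-limit set of $x_0$ and an ergodic invariant measure that is nearly optimal for the relevant exterior-power cocycle, on which the Oseledets theorem furnishes tangent directions along which the time-averages genuinely converge, so the two limits add; in the compact setting of Part 2 this meshes with the measure-theoretic characterisation of $\sup_{X_0}\overline{\Psi}^B$ underlying the equality in~\cref{eq:Phi-opt}.
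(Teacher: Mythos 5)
Your outline reproduces the paper's argument essentially step for step: part 1 via \cref{rmk:dL upper bound} after converting the Kaplan--Yorke constraint of \cref{eq:dL restatement} into the time-average condition \cref{eq:PsiB cond} and enforcing that condition with \cref{eq:Phi-ineq} applied to the augmented system \cref{eq:augmenteddim1}; the second inequality of \cref{eq:dL bounds 2} via the equality case of \cref{eq:Phi-opt} on the compact augmented manifold, which supplies for each $\varepsilon>0$ a $V$ feasible in \cref{eq:Beps} at $B=d_L-j$; and the first inequality of \cref{eq:dL bounds 2} by averaging the $\varepsilon$-relaxed constraint, dividing by $-\mu_{j+1}>0$, and bounding $\varepsilon/(-\mu_{j+1})$ by $\varepsilon\left|\sup_{x_0}M_{j+1}(x_0)\right|^{-1}$ (your case split on the sign of $M_j(x_0)$ there is fine). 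Those portions are correct and match the paper. However, the proposal is not complete: both your part 1 and your part-2 lower bound are routed through the direction of your ``identity'' that you explicitly do not prove, namely $\sup_{z^k_0,z^{k+1}_0}\overline{\Psi}^B(x_0,z^k_0,z^{k+1}_0)\ge(1-B)M_k(x_0)+B\,M_{k+1}(x_0)$. Subadditivity of $\limsup$ gives only the opposite inequality, and what your argument needs is precisely that ``$\overline{\Psi}^B\le\varepsilon$ for every pair of initial directions'' forces $(1-B)M_k+B\,M_{k+1}\le\varepsilon$; your closing paragraph concedes this is unresolved and offers only a sketch of an ergodic/Oseledets argument. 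An acknowledged missing step at the crux is a genuine gap, so as submitted this is a proof outline rather than a proof of \cref{thm:dimcombined}.

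For comparison, the paper deploys no ergodic machinery at that point: after rewriting the constraint of \cref{eq:dL restatement} as $(1-B)\sup_{z^k_0}\overline{\Phi}_k+B\sup_{z^{k+1}_0}\overline{\Phi}_{k+1}\le0$, it asserts that this holds if and only if the inequality holds without the suprema for all $z^k_0$ and $z^{k+1}_0$, and identifies that statement with condition \cref{eq:PsiB cond}; likewise, in the part-2 lower bound it passes from the averaged constraint directly to $M_j(x_0)+B\mu_{j+1}(x_0)\le\varepsilon$ ``by maximizing over the initial directions.'' In other words, the step you isolate as the main obstacle is treated by the paper as an immediate interchange -- the two suprema run over independent variables, and the time average of $\Psi^B$ is identified with the corresponding combination of the two separate time averages -- not via the resonance-of-near-maximizers construction you propose. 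You have therefore correctly located the one non-mechanical point in the argument (the $\limsup$-of-a-sum versus sum-of-$\limsup$s issue), but you neither resolve it nor adopt the paper's short justification; to complete the proposal you must either justify that interchange for the exterior-power tangent dynamics along a fixed trajectory, or carry out in full the measure-theoretic argument you only gesture at.
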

\begin{proof}
For the first part, the assumption that $\sup_{x_0\in\mathcal{B}}\Mu_{k+1}(x_0)<0$ means that $k\geq j$, where $j$ is as in \cref{def:ldim}. By \cref{rmk:dL upper bound}, we therefore have
\begin{equation}
\label{eq:dL bound pf}
    d_L \le \inf_{B\in[0,1)}(k+B) \quad \text{s.t.}\quad B\left[\Mu_k(x_0)-\Mu_{k+1}(x_0)\right]-\Mu_k(x_0)\geq 0~~\text{for all}~x_0\in\mathcal{B}.
\end{equation}
The constraint in \cref{eq:dL bound pf} is equivalent to the $\overline{\Psi}^B\le 0$ condition~\cref{eq:PsiB cond}. To see this, we rearrange the constraint in~\cref{eq:dL bound pf} and use the expression~\cref{eq:Sigmak Phik} for $\Mu_k$ to obtain an equivalent inequality,
\begin{equation}
(1-B)\left[\sup_{z_0^k\in\mathbb{S}^{\bn{n}{k}}}\overline\Phi_k(x_0,z_0^k)\right]+ B\left[\sup_{z_0^{k+1}\in\mathbb{S}^{\bn{n}{k+1}}}\overline\Phi_{k+1}(x_0,z_0^{k+1})\right]~\le~0
~~\text{for all}~x_0\in\mathcal{B}.
\end{equation}
This condition is equivalent to the $\overline{\Psi}^B\le 0$ condition~\cref{eq:PsiB cond}, so the latter can replace the constraint in~\cref{eq:dL bound pf} to give
\begin{equation}
\label{eq:dL bound pf 2}
d_L\le\inf_{B\in[0,1)}(k+B) \quad \text{s.t.}\quad \overline{\Psi}^B(x_0,z_0^k,z_0^{k+1})\le 0~~ \text{for all}~(x_0,z_0^k,z_0^{k+1})\in\mathcal{B}\times\mathbb{S}^{\bn{n}{k}-1}\times\mathbb{S}^{\bn{n}{k+1}-1}.
\end{equation}

We want to replace the constraint in \cref{eq:dL bound pf 2} with one that does not involve time averages. To do so we bound $\overline{\Psi}^B$ above using the general formulation~\cref{eq:Phi-opt} for time averages by letting $X=(x,z^k,z^{k+1})$ on domain $\Omega=\mathcal{B}\times\mathbb{S}^{\bn{n}{k}-1}\times\mathbb{S}^{\bn{n}{k+1}-1}$, and letting $F(X)$ be the right-hand side of~\cref{eq:augmenteddim1}. This gives
\begin{equation}
\label{eq:dL sphere pf 1}
\sup_{\substack{x_0\in\mathcal{B}\hspace{18pt}\\z^k_0\in\mathbb{S}^{\bn{n}{k}-1}\\z^{k+1}_0\in\mathbb{S}^{\bn{n}{k+1}-1}}}
\overline{\Psi}_B
\le \inf_{V\in C^1}\sup_{\substack{x\in\mathcal{B}\hspace{14pt}\\z^k\in\mathbb{S}^{\bn{n}{k}-1}\\z^{k+1}\in\mathbb{S}^{\bn{n}{k+1}-1}}}
\left[\Psi^B + f\cdot D_xV+\ell_k\cdot D_{z^k}V +\ell_{k+1}\cdot D_{z^{k+1}}V\right].
\end{equation}
The constraint in \cref{eq:dL bound pf}, which is equivalent to nonpositivity of the left-hand side of~\cref{eq:dL sphere pf 1}, can be strengthened by requiring there to exist $V\in C^1$ for which the inner supremum on the right-hand side is nonpositive. This gives the first part of the proposition.

For the second part, note that~\cref{eq:dL bound pf} is an equality when $k=j$, according to~\cref{thm:dimlemma}, and likewise~\cref{eq:dL bound pf 2} is an equality:
\begin{equation}
\label{eq:dL sphere pf 2}
d_L=\inf_{B\in[0,1)}(j+B) \quad \text{s.t.}\quad \overline{\Psi}^B(x_0,z_0^j,z_0^{j+1})\le 0~~\text{for all}~(x_0,z_0^j,z_0^{j+1})\in\mathcal{B}\times\mathbb{S}^{\bn{n}{j}-1}\times\mathbb{S}^{\bn{n}{j+1}-1}.
\end{equation}
To prove the second inequality in the claim~\cref{eq:dL bounds 2}, let $B$ be any value for which the constraint in~\cref{eq:dL sphere pf 2} holds. It suffices to prove existence of $V$ for which constraint in the definition~\cref{eq:Beps} of $B_\varepsilon$ holds also. This will imply that the infimum over $B$ in~\cref{eq:Beps}  is at least as small as in~\cref{eq:dL sphere pf 2}. 

Under the assumptions that $\mathcal{B}$ is compact and $f\in C^2$, the $X$ domain $\Omega$ is compact, and $F(X)$, the right-hand side of~\cref{eq:augmenteddim1}, is $C^1$ on an open region containing $\Omega$. For such $X$ and $F$, the general upper bound~\cref{eq:Phi-opt} is an equality \cite{tobasco2018optimal}, so in particular~\cref{eq:dL sphere pf 1} is an equality. Equality in~\cref{eq:dL sphere pf 1} with $k=j$, and in~\cref{eq:dL sphere pf 2},  implies that the constraint in~\cref{eq:dL sphere pf 2} is equivalent to
\begin{equation}
\label{eq:dL sphere pf 3}
0\ge \inf_{V\in C^1}\sup_{\substack{x\in\mathcal{B}\\z^k\in\mathbb{S}^{\bn{n}{k}-1}\\z^{k+1}\in\mathbb{S}^{\bn{n}{k+1}-1}}}
\left[\Psi^B + f\cdot D_xV+\ell_k\cdot D_{z^k}V +\ell_{k+1}\cdot D_{z^{k+1}}V\right].
\end{equation}
This inequality implies that, for any $\varepsilon>0$, there exists $V\in C^1$ such that
\begin{equation}
\label{eq:dL sphere pf 4}
\left[\Psi^B + f\cdot D_xV+\ell_k\cdot D_{z^k}V +\ell_{k+1}\cdot D_{z^{k+1}}V\right] \le \varepsilon\quad\forall~(x,z^k,z^{k+1})\in\mathcal{B}\times\mathbb{S}^{\bn{n}{k}-1}\times\mathbb{S}^{\bn{n}{k+1}-1}.
\end{equation}
(Such $V$ need not exist with $\varepsilon=0$ if the infimum over $V$ in~\cref{eq:dL sphere pf 1} is not attained.) Condition~\cref{eq:dL sphere pf 4} is the same as the constraint in the minimization~\cref{eq:Beps} defining $B_\varepsilon$, and this implies $B_\varepsilon\le B$. We have shown that $B_\varepsilon\le B$ for any $B$ satisfying the constraint in~\cref{eq:dL sphere pf 2}, and thus $j+B_\varepsilon\le d_L$. This establishes the second inequality in~\cref{eq:dL bounds 2}.

To prove the first inequality in the claim~\cref{eq:dL bounds 2}, suppose that the inequality constraint in the definition~\cref{eq:Beps} of $B_\varepsilon$ is satisfied for a pair of $B$ and $V$. Averaging this inequality along any trajectory and then maximizing over the initial directions $z_0^k$ and $z_0^{k+1}$ gives
\begin{equation}
\label{eq:dL sphere pf 6}
(1-B)\Mu_k(x_0)+B\Mu_{k+1}(x_0)\le \varepsilon.
\end{equation}
Dividing by the positive quantity $-\mu_{k+1}=\Mu_k-\Mu_{k+1}$ gives
\begin{equation}
\label{eq:dL sphere pf 7}
\frac{\Mu_k(x_0)}{-\mu_{k+1}(x_0)}-\frac{\varepsilon}{-\mu_{k+1}(x_0)}\le B.
\end{equation}
Maximizing the first term over $x_0$, and bounding the second term above using $\mu_{k+1}(x_0)\le \sup_{x_0\in\mathcal{B}}\Mu_{k+1}<0$, we find
\begin{equation}
\label{eq:dL sphere pf 8}
(d_L-j) - \frac{\varepsilon}{\left|\sup_{x_0\in\mathcal{B}}\Mu_{j+1}(x_0)\right|}\le B.
\end{equation}
This inequality holds at every $B$ for which there exists a $V$ satisfying the constraint in the minimization~\cref{eq:Beps} that defines $B_\varepsilon$. Therefore, minimizing over such $B$ gives~\cref{eq:dL sphere pf 8} with $B_\varepsilon$ on the right-hand side. This establishes the first inequality in~\cref{eq:dL bounds 2}, which completes the proof.
\end{proof}
\begin{rmk}
The second part of \cref{thm:dimcombined} implies that
\begin{equation}
d_L\le j + B_\varepsilon + \varepsilon\,\left|\sup_{x_0\in\mathcal{B}}\Mu_{j+1}(x_0)\right|^{-1}
\end{equation}
at each $\varepsilon>0$, and that this upper bound converges to $d_L$ as $\varepsilon\to0$.

\end{rmk}

\subsection{Shifted spectrum approach}
\label{sec:shifted}
In the preceding subsection, the possibly unbounded $k$-vectors $y^k$ are replaced by their projections $z^k$ onto the unit sphere, which are necessarily bounded. In the alternative approach of the present subsection, we replace the dynamics of $y^k$ by
\begin{equation}
\label{eq:wk}
w^k(t) = c\hspace{1pt}e^{-Bt}y^k(t)
\end{equation}
for any nonzero constant $c$, so the initial condition $w_0^k=c\hspace{1pt}y_0^k$ can be any nonzero point in $\mathbb{R}^{\bn{n}{k}}$. This $w^k(t)$ is bounded forward in time for sufficiently large $B$. The ODE for $w^k$ is the same as the ODE~\cref{eq:yk ode} for $y^k$, except the spectrum of the Jacobian is shifted by $-B$. Together with the ODE~\cref{eq:dynsys} for $x$ this gives the augmented system
\begin{equation}
    \label{eq:shiftedaugmented}
    \ddt{}\begin{bmatrix}
        x\\w^k
    \end{bmatrix} = \begin{bmatrix}
        f(x)\\ m^B_k(x,w^k)
    \end{bmatrix},
\end{equation}
where
\begin{equation}
\label{eq:ellB}
m^B_k(x,w^k) = Df^{[k]}(x) w^k - B w^k.
\end{equation}
Note that this ODE is linear in $w^k$, whereas the ODE~\cref{eq:projectedaugmented} in the sphere projection approach is nonlinear in $z^k$. The following lemma asserts that if $B$ is large enough to prevent unbounded growth of $w^k$, then $B$ is an upper bound on the LE sum $\Mu_k$, and the infimum over such $B$ is equal to~$\Mu_k$. The second part gives stronger conditions for this to be true in a uniform sense for all initial conditions.

\begin{lem}
\label{thm:shiftedspectrumstability}
Let $x(t)$ evolve by the ODE \cref{eq:dynsys}, where $f\in C^1(\mathcal{B},\mathbb{R}^n)$ and $\mathcal{B}$ is forward invariant, and let $w^k(t)$ evolve by the ODE~\cref{eq:shiftedaugmented} with $B$ fixed. Assume each trajectory $x(t)$ in $\mathcal{B}$ is bounded for $t\ge0$. For each $k\in\{1,\ldots,n\}$:
\begin{enumerate}
\item The leading LE sum $\Mu_k(x_0)$ is equivalent to
\begin{equation}
\label{eq:LE sum is inf}
\Mu_k(x_0) = \inf_{B\in\mathbb{R}} B\quad\text{s.t.}\quad \text{for each } w^k_0\in\mathbb{R}^{\bn{n}{k}}, ~w^k(t)\text{ is bounded for }t\ge0.
\end{equation}
\item Suppose $f\in C^2(\mathcal{B},\mathbb{R}^n)$. For each compact set $\mathcal{A}\subset\mathcal{B}$ and each $B>\sup_{x_0\in\mathcal{A}} \Mu_k(x_0)$, there exist constants $K>0$ and $\lambda>0$ such that
\begin{equation}
\label{eq:exponential decay of w}
|w^k(t)|\leq K e^{-\lambda t}|w^k_0| \quad \text{for all $t\geq 0$ and }(x_0,w^k_0)\in\mathcal{A}\times\mathbb{R}^{\bn{n}{k}}.
\end{equation}
\end{enumerate}
\end{lem}
\begin{proof}
We first show that if $B$ satisfies the constraint in \cref{eq:LE sum is inf}, then $\Mu_k(x_0)\leq B$. Substituting $y^k=e^{Bt}w^k/c$ into the time average on the right-hand side of~\cref{eq:Sigmak} gives
\begin{equation}
\limsup_{t\to\infty} \frac{1}{t} \log \left| y^k(t) \right|
=\limsup_{t\to\infty} \frac{1}{t} \log \left( \left| e^{Bt}w^k(t) \right|/c\right) 
    = B+\limsup_{t\to\infty} \frac{1}{t} \log \left|w^k(t) \right| 
     \le B,
\end{equation}
where the final inequality follows from boundedness of $w^k(t)$.
Taking the supremum of the left-hand side over $y^k_0$ gives $\Mu_k(x_0)\le B$. Next, we assume that $B>\Mu_k(x_0)$ and show that the constraint in \cref{eq:LE sum is inf} must hold. With such $B$, and with $\mathcal{A}=\{x_0\}$, the second part of the proposition (proved below) applies because $B>\Mu_k(x_0)=\sup_{x_0'\in\mathcal{A}} \Mu_k(x_0')$. Thus \cref{eq:exponential decay of w} holds and implies that $w^k(t)$ is bounded for all $t\ge 0$, and the first part is proved.

To prove the second part of the proposition, let $B>\sup_{x_0\in\mathcal{A}} \Mu_k(x_0)$. It suffices to show that $K$ and $\lambda$ exists so that \cref{eq:exponential decay of w} holds for $(x_0,w_0^k)\in\mathcal{A}\times\mathbb{S}^{\bn{n}{k}-1}$, rather than all $w^k_0\in\mathbb{R}^{\bn{n}{k}-1}$. These same $K$ and $\lambda$  remain valid for all $w^k_0\in\mathbb{R}^{\bn{n}{k}}$ since $w^k(t)$ scales proportionally to $w^k_0$, due to the linearity of the $w^k$ dynamics in~\cref{eq:shiftedaugmented}.

To choose $\lambda$, note that
\begin{equation}
\begin{aligned}
        \sup_{x_0\in\mathcal{A}}\Mu_k(x_0) &=\sup_{(x_0,y^k_0)\in\mathcal{A}\times\mathbb{S}^{\bn{n}{k}-1}}\limsup_{t\to\infty} \frac{1}{t} \log{|y^k(t)|} \\
        &= B + \sup_{(x_0,w^k_0)\in\mathcal{A}\times\mathbb{R}^{\bn{n}{k}}}\limsup_{t\to\infty} \frac{1}{t} \log{|w^k(t)|},
\end{aligned}
\end{equation}
and so
\begin{equation}
    \sup_{(x_0,w^k_0)\in\mathcal{A}\times\mathbb{R}^{\bn{n}{k}}}\limsup_{t\to\infty} \frac{1}{t} \log{|w^k(t)|} < 0.
\end{equation}
We therefore can choose a $\lambda>0$ such that
\begin{equation}
\limsup_{t\to\infty} \frac{1}{t} \log{|w^k(t)|} < - \lambda \quad\text{ for all }\quad(x_0,w^k_0)\in\mathcal{A}\times\mathbb{R}^{\bn{n}{k}}.
\label{eq: w decay}
\end{equation}

To choose $K$, note that~\cref{eq: w decay} implies $|w^k(t)|\le e^{-\lambda t}|w^k_0|$ after some time $T(x_0,w^k_0)$. We choose $T$ to depend continuously on the initial conditions $(x_0,w^k_0)$, which is possible since continuity of $w^k(t)$ in both $t$ and $(x_0,w^k_0)$ follows from the fact that the right-hand side of the ODE~\cref{eq:shiftedaugmented} is locally Lipschitz when $f\in C^2$. Having chosen $T(x_0,w^k_0)$ to be continuous, the maximum
\begin{equation}
T^* = \max_{(x_0,w^k_0)\in\mathcal{A}\times\mathbb{S}^{\bn{n}{k}-1}}T(x_0,w^k_0)
\end{equation}
is finite since it is over a compact set. Therefore $|w^k(t)| \leq e^{-\lambda t} |w^k_0|$ for all $t>T^*$ and all $(x_0,w_0)\in\mathcal{A}\times\mathbb{S}^{\bn{n}{k}-1}$. Finally, we choose
\begin{equation}
K=\max_{\substack{(x_0,w^k_0)\in\mathcal{A}\times\mathbb{S}^{\bn{n}{k}-1}\\t\in[0,T^*]}} e^{\lambda t} |w^k(t)|,
\end{equation}
which again is finite since it is the maximum of a continuous function over a compact set. For the chosen $K$ and $\lambda$, the inequality in~\cref{eq:exponential decay of w} indeed holds over $\mathcal{A}\times\mathbb{S}^{\bn{n}{k}-1}$ and thus also over $\mathcal{A}\times\mathbb{R}^{\bn{n}{k}}$.
\end{proof}

Practical upper bounds on the LE sum $\Mu_k$ are found by applying a specialized  function theorem (\cref{thm:lyapunovfunction}) to the ODE system \cref {eq:shiftedaugmented} and combining the result with \cref{thm:shiftedspectrumstability}. This yields the following proposition.

\begin{prop}
\label{thm:shiftedmethod}
Let $x(t)$ evolve by the ODE \cref{eq:dynsys}, where $f\in C^1(\mathcal{B},\mathbb{R}^n)$ and $\mathcal{B}$ is forward invariant. Let $m^B_k(x,w^k)$ be as defined in \cref{eq:ellB}.
\begin{enumerate}
\item  Suppose each trajectory $x(t)$ in $\mathcal B$ is bounded for $t\ge0$. For each $k\in\{1,\ldots,n\}$, the maximum leading LE sum $\Mu_k$ is bounded above by
\begin{equation}
\label{eq:Sigmak C1 ineq shifted}
\sup_{x_0\in \mathcal{B}} \Mu_k(x_0) \leq \inf_{V\in C^1(\mathcal{B}\times\mathbb{R}^{\bn{n}{k}})} B \quad
\text{s.t.}\quad \begin{array}[t]{rl}
V\geq &\hspace{-7pt} \left|w^k\right|^2\\
f\cdot D_xV + m^B_k\cdot D_{w^k}V \leq &\hspace{-7pt}0,
\end{array}
\end{equation}
where the right-hand constraints are imposed for all $(x,w^k)\in\mathcal{B}\times\mathbb{R}^{\bn{n}{k}}$.
\item Suppose $\mathcal{B}$ is compact and $f\in C^2(\mathcal{B},\mathbb{R}^n)$. Then~\cref{eq:Sigmak C1 ineq shifted} holds with equality between the supremum and the infimum, including when Lyapunov functions are restricted to the form $V(x,w^k) = (w^k)^\mathsf{T} U(x) w^k$ with $U\in C^1(\mathcal{B},\mathbb{R}^{\bn{n}{k}\times \bn{n}{k}})$.
\end{enumerate}
\end{prop}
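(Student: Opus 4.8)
The plan is to prove the inequality of part~1 by a direct Lyapunov argument combined with \cref{thm:shiftedspectrumstability}, and to obtain the matching lower bound of part~2 from a converse Lyapunov construction, where essentially all of the work lies.

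For part~1, suppose $V\in C^1$ and $B\in\mathbb{R}$ satisfy the two pointwise inequalities in \cref{eq:Sigmak C1 ineq shifted}. Fix $x_0\in\mathcal{B}$, whose forward trajectory is bounded by hypothesis, and an arbitrary $w^k_0\in\mathbb{R}^{n_k}$, and let $(x(t),w^k(t))$ solve the augmented system \cref{eq:shiftedaugmented}; this solution exists for all $t\ge0$ since $x(t)$ does and the $w^k$-equation is then linear in $w^k$ with continuous coefficients, so Gr\"onwall rules out finite-time blow-up. By the chain rule $\ddt{}V(x(t),w^k(t)) = f\cdot D_xV + m^B_k\cdot D_{w^k}V\le 0$, so $|w^k(t)|^2\le V(x(t),w^k(t))\le V(x_0,w^k_0)$ for all $t\ge0$ and hence $w^k(t)$ is bounded. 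Since $w^k_0$ was arbitrary, the characterization \cref{eq:LE sum is inf} in \cref{thm:shiftedspectrumstability} yields $\Mu_k(x_0)\le B$. Taking the supremum over $x_0\in\mathcal{B}$ and then the infimum over admissible $(V,B)$ gives \cref{eq:Sigmak C1 ineq shifted}.

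For part~2, write $B^*=\sup_{x_0\in\mathcal{B}}\Mu_k(x_0)$, which is finite because $\mathcal{B}$ is compact. Part~1 shows that the right-hand side of \cref{eq:Sigmak C1 ineq shifted} is at least $B^*$, and this stays true if the infimum is restricted to $V$ of the quadratic form $V(x,w^k)=(w^k)^\mathsf{T}U(x)w^k$, since restricting the feasible set only raises the infimum. It therefore suffices to show that for every $B>B^*$ there is some $U\in C^1(\mathcal{B},\mathbb{R}^{n_k\times n_k})$ making this quadratic $V$ admissible; that pushes the restricted infimum down to $B^*$ as well, giving equality together with the stated form. By part~3 of \cref{thm:shiftedspectrumstability}, for such $B$ the slice $\mathcal{B}\times\{0\}$ is uniformly exponentially stable for \cref{eq:shiftedaugmented}, i.e.\ $|w^k(t)|\le Ke^{-\lambda t}|w^k_0|$ with $K,\lambda>0$ independent of $x_0$. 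The natural candidate is the energy integral $U(x)=\int_0^\infty W(t;x)^\mathsf{T}W(t;x)\,\mathrm{d}t$, where $W(t;x)$ is the fundamental matrix solution of $\dot W=(Df^{[k]}(\phi^t(x))-BI)W$, $W(0;x)=I$: the decay estimate makes the integral converge uniformly on $\mathcal{B}$; the cocycle identity $W(s;x(t))\,w^k(t)=w^k(t+s)$ gives $V(x(t),w^k(t))=\int_t^\infty|w^k(\tau)|^2\,\mathrm{d}\tau$, hence $\ddt{}V=-|w^k|^2\le 0$ along trajectories, which is precisely the second constraint since $(x(0),w^k(0))$ sweeps out all of $\mathcal{B}\times\mathbb{R}^{n_k}$; and $V\ge c_1|w^k|^2$ for some $c_1>0$ because $W(t;x)\to I$ as $t\to0$ uniformly on the compact set $\mathcal{B}$, so after scaling $U$ by a positive constant we obtain $V\ge|w^k|^2$.

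The main obstacle is the $C^1$-regularity of $U$ in $x$, compounded by two non-standard features of the situation: the ``equilibrium'' is the entire slice $\mathcal{B}\times\{0\}$ rather than an isolated point, and $\mathcal{B}$ may be a proper subdomain or a manifold, so the classical converse Lyapunov theorems do not apply verbatim. Differentiating the energy integral in $x$ is delicate because $D\phi^t$ may grow, at a rate tied to the leading Lyapunov exponent, which can exceed $\Mu_k$ when later exponents are negative. This is why the hypothesis is strengthened to $f\in C^2$ in part~2 and why the cleanest route is to invoke the specialised existence theorem \cref{thm:lyapunovfunction}, proved in \cref{sec:appendix2}, applied to \cref{eq:shiftedaugmented} on the compact domain $\mathcal{B}$: it supplies, for every $B>B^*$, a $C^1$ Lyapunov function of the quadratic form, from which both the equality in \cref{eq:Sigmak C1 ineq shifted} and the reduction to $V(x,w^k)=(w^k)^\mathsf{T}U(x)w^k$ follow at once.
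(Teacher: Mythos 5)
Your proof is correct and follows essentially the same route as the paper: part 1 combines a Lyapunov-function boundedness argument with the characterization in \cref{thm:shiftedspectrumstability} (you unfold the argument of the first part of \cref{thm:lyapunovfunction} directly rather than citing it), and part 2 uses the uniform exponential decay from part 3 of \cref{thm:shiftedspectrumstability} together with the converse Lyapunov construction of \cref{thm:lyapunovfunction} to produce an admissible quadratic $V=(w^k)^\mathsf{T}U(x)w^k$ for every $B>\sup_{x_0}\Mu_k(x_0)$. Your interim sketch of the infinite-horizon energy integral is superfluous, but you correctly identify its regularity pitfall and land on the same appendix lemma the paper invokes.
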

\begin{proof}
For the first part, assume $B$ and $V$ satisfy the constraints on the right-hand side of~\cref{eq:Sigmak C1 ineq shifted}. Letting $A(x)=Df^{[k]}(x)-BI_{\bn{n}{k}}$, the first part of \cref{thm:lyapunovfunction} applies to the system \cref{eq:shiftedaugmented} and guarantees the boundedness of $w^k(t)$ for $t\ge0$. Boundedness of $w^k(t)$ means that $B$ satisfies the constraint on the right-hand side of \cref{eq:LE sum is inf}, so $\Mu_k(x_0)\le B$ for all $x_0\in\mathcal{B}$. This implies \cref{eq:Sigmak C1 ineq shifted}, so the first part of the proposition is proved.

For the second part, let $B>\sup_{x_0\in\mathcal{B}} \Mu_k(x_0)$. By the second part of \Cref{thm:shiftedspectrumstability}, there exist $K,\lambda>0$ such that \cref{eq:exp decay} holds. Since $A\in C^1(\mathcal{B},\mathbb{R}^{\bn{n}{k}\times \bn{n}{k}})$ when $f\in C^2$, and since $\mathcal{B}$ is compact, the second part of \cref{thm:lyapunovfunction} guarantees the existence of $V(x,w^k) = (w^k)^\mathsf{T} U(x) w^k$ satisfying the constraint in \cref{eq:Sigmak C1 ineq shifted}. This is true for any $B>\sup_{x_0\in\mathcal{B}} \Mu_k(x_0)$, so~\cref{eq:Sigmak C1 ineq shifted} is an equality.
\end{proof}

To bound the Lyapunov dimension $d_L$ using the shifted spectrum approach, one option is to find a negative upper bound on $\Mu_{k+1}$ and a nonnegative upper bound on $\Mu_k$, and then use expression~\cref{eq:dimbound1} of \cref{thm:dimension_separate}. Another option, which is potentially sharper, is to bound $d_L$ directly, similarly to \cref{thm:dimcombined} in the sphere projection approach. However, whereas \cref{thm:dimcombined} avoids $y^k$ and $y^{k+1}$ by using their projections onto the unit sphere, here we retain the $y^{k+1}$ variable and shift the spectrum of $y^k$ by a factor that depends on $y^{k+1}$. We assume that $k$ is chosen so that $d_L<k+1$, as can be confirmed by a negative upper bound on $\Mu_{k+1}$. Then $y^{k+1}(t)$ is bounded and decays exponentially on average. The variable $y^k(t)$ might be unbounded, so we instead use
\begin{equation}
w(t)=\big|y^{k+1}(t)\big|^{\tfrac{B}{1-B}}y^k(t),
\label{eq:w ldim}
\end{equation}
which will be bounded for large enough $B\in[0,1)$ since $y^{k+1}(t)$ decays. The particular expression~\cref{eq:w ldim} for $w$ is chosen so that boundedness of $w$ implies $d_L\le k+B$, as asserted by \cref{lem:dL shift} below. (We do not pursue conditions under which the the bound on $d_L$ given by this lemma is an equality.)
\begin{lem}
\label{lem:dL shift}
Let $x(t)$ evolve by the ODE \cref{eq:dynsys}, where $f\in C^1(\mathcal{B},\mathbb{R}^n)$ and $\mathcal{B}$ is forward invariant. Assume each trajectory $x(t)$ in $\mathcal B$ is bounded for $t\ge0$. For any positive integer $k<n$ such that $\sup_{x_0\in\mathcal{B}}\Mu_{k+1}(x_0)<0$, let $w(t)$ be as defined by~\cref{eq:w ldim}, where $y^k(t)$ and $y^{k+1}(t)$ evolve by~\cref{eq:yk ode}. Then the global Lyapunov dimension $d_L$ over $\mathcal{B}$ is bounded above by
\begin{equation}
\label{eq:LE sum shift lemma}
d_L \le \inf_{B\in[0,1)}(k+B)\quad\text{s.t.}\quad \begin{array}[t]{l}
\text{for each } (x_0,y_0^k,y_0^{k+1})\in\mathcal{B}\times\mathbb{R}^{\bn{n}{k}}\times\mathbb{R}^{\bn{n}{k+1}},\\ ~w(t)\text{ is bounded for }t\ge0.
\end{array}
\end{equation}
\end{lem}
\begin{proof}
Let $B\in[0,1)$ be such that $w(t)$ defined by~\cref{eq:w ldim} is bounded for $t\ge0$ with each initial condition. It suffices to show that $d_L\le k+B$. Taking the logarithm of $|w(t)|$ and dividing by $t>0$ gives
\begin{equation}
\label{eq: logw}
\frac{1}{t}\log|w(t)|=\frac{B}{1-B}\frac{1}{t}\log\big|y^{k+1}(t)\big|+\frac{1}{t}\log\big|y^{k}(t)\big|.
\end{equation}
The left-hand side is nonpositive as $t\to\infty$ since $w(t)$ is bounded, so taking this limit and then maximizing over initial conditions gives
\begin{equation}
\label{eq: dL shift pf}
0\ge\left(\frac{B}{1-B}\right) \sup_{y^{k+1}_0\in\mathbb{S}^{{\bn{n}{k+1}}-1}}\limsup_{t\to\infty} \frac{1}{t}\log\big|y^{k+1}(t)\big| + \sup_{y^k_0\in\mathbb{S}^{{\bn{n}{k}}-1}}\limsup_{t\to\infty} \frac{1}{t}\log{|y^k(t)|}.
\end{equation}
Recalling expression~\cref{eq:Sigmak} for $\Mu_k$, we see that \cref{eq: dL shift pf} is equivalent to $B\left[\Mu_k(x_0)-\Mu_{k+1}(x_0)\right]-\Mu_k(x_0)\ge 0$, and so \cref{rmk:dL upper bound} gives the desired conclusion that $d_L\le k+B$ for all $x_0$.
\end{proof}
In order to apply \cref{lem:dL shift} to a particular ODE, one must show that $w(t)$ indeed remains bounded for all initial conditions. Such boundedness can be shown using the specialized Lyapunov function result of \cref{thm:lyapunovfunction}, for which we need the dynamics of $w(t)$. An ODE for $w$ is found by taking the time derivative of~\cref{eq:w ldim}, using the ODE~\cref{eq:y ode} for $y^k$ and $y^{k+1}$, and eliminating $y^k$ in favor of $w$. The resulting ODE, together with the ODEs for $x$ and $y^{k+1}$,  form the augmented system
\begin{equation}
\label{eq:dimaugmentedshifted}
        \ddt{} \begin{bmatrix}x\\w\\y^{k+1}\end{bmatrix} = \begin{bmatrix}f(x)\\p_k^B(x,w,y^{k+1})\\ Df^{[k+1]}(x)y^{k+1} \end{bmatrix},
\end{equation}
where
\begin{equation}
\label{eq:pkB def}
    p_k^B(x,w,y^{k+1}) =Df^{[k]}(x)w + \frac{B}{1-B}\frac{(y^{k+1})^\mathsf{T} Df^{[k+1]}(x)y^{k+1}}{|y^{k+1}|^2}w.
\end{equation}
\Cref{thm:lyapunovfunction} can be used to show boundedness of $w(t)$ that satisfy~\cref{eq:dimaugmentedshifted} for chosen $k$ and $B$, in which case \cref{lem:dL shift} will imply $d_L\le k+B$. The following proposition states this main result for bounding Lyapunov dimension using the shifted spectrum approach.
\begin{prop}
\label{thm:shifted dimension}
Let $x(t)$ evolve by the ODE \cref{eq:dynsys}, where $f\in C^1(\mathcal{B},\mathbb{R}^n)$ and $\mathcal{B}$ is forward invariant. Assume each trajectory $x(t)$ in $\mathcal B$ is bounded for $t\ge0$. For any positive integer $k< n$ such that $\sup_{x_0\in\mathcal{B}}\Mu_{k+1}(x_0)<0$, let $p_k^B(x,w,y^{k+1})$ be as defined by \cref{eq:pkB def} . Then the global Lyapunov dimension $d_L$ over $\mathcal{B}$ is bounded above by
\begin{equation}
\label{eq:shifted combined dimension}
d_L \leq \inf_{\substack{B\in[0,1)\\V\in C^1\hspace{6pt}}} (k+B) \quad
\text{s.t.}\quad \begin{array}[t]{rl}
V(x,w,y^{k+1}) \geq&\hspace{-7pt} \left|w\right|^2 \\
f\cdot D_x V + p_k^B \cdot D_{w} V+\big(Df^{[k+1]}y^{k+1}\big)\cdot D_{y^{k+1}} V  \leq&\hspace{-7pt} 0,
\end{array}
\end{equation}
where the right-hand constraints are imposed for all $(x,w,y^{k+1})\in\mathcal{B}\times\mathbb{R}^{\bn{n}{k}}\times\mathbb{R}^{\bn{n}{k+1}}$, and $D_xV$, $D_w V$ and $D_{y^{k+1}}V$ denote the gradients of $V(x,w,y^{k+1})$ with respect to each argument.
\end{prop}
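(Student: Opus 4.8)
The plan is to derive this shifted-spectrum dimension bound from the sphere-projection one in \cref{thm:dimcombined}, just as \cref{thm:shiftedmethod} parallels \cref{thm:projectionmethod}: I will show that the two constraints in \cref{eq:shifted combined dimension} force the time-average condition \cref{eq:PsiB cond}, and then invoke \cref{rmk:dL upper bound}. So fix $B\in[0,1)$ and $V\in C^1$ satisfying the constraints in \cref{eq:shifted combined dimension}, and fix an arbitrary $(x_0,z^k_0,z^{k+1}_0)\in\mathcal B\times\mathbb S^{n_k-1}\times\mathbb S^{n_{k+1}-1}$. First I would run the augmented system \cref{eq:dimaugmentedshifted} from $x(0)=x_0$, $y^{k+1}(0)=z^{k+1}_0$, $w(0)=z^k_0$: solving the linear $y^{k+1}$ equation first (its coefficient $Df^{[k+1]}(x(t))$ is bounded in $t$ because $x(t)$ stays bounded in $\mathcal B$) gives $y^{k+1}(t)\ne 0$ for all $t\ge0$, so $\hat y(t):=y^{k+1}(t)/|y^{k+1}(t)|$ and $S^B_k$ are well-defined; then the $w$ equation is linear with $t$-bounded coefficient, so $w(t)$ exists for all $t\ge0$ and never vanishes. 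By the chain rule and the second constraint of \cref{eq:shifted combined dimension}, $\tfrac{\mathrm d}{\mathrm dt}V(x(t),y^{k+1}(t),w(t))\le0$; together with $V\ge|w|^2\ge0$ this forces $|w(t)|^2\le V(x_0,z^{k+1}_0,z^k_0)<\infty$, so $w(t)$ is bounded for $t\ge0$ --- the elementary boundedness implication used in the first part of \cref{thm:lyapunovfunction} and in the proof of \cref{thm:shiftedmethod}.

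The crux is that \cref{eq:dimaugmentedshifted} projects onto \cref{eq:augmenteddim1}. Writing $\hat w=w/|w|$, a short computation shows the scalar multiplier $\tfrac{B}{1-B}\Phi_{k+1}$ inside $S^B_k$ cancels upon normalization, so $\hat w$ solves $\dot{\hat w}=\ell_k(x,\hat w)$ and $\hat y$ solves $\dot{\hat y}=\ell_{k+1}(x,\hat y)$; hence $(x(t),\hat w(t),\hat y(t))$ is precisely the trajectory of \cref{eq:augmenteddim1} starting from $(x_0,z^k_0,z^{k+1}_0)$. Differentiating $\log|w(t)|$ along the flow and multiplying by $1-B>0$ gives the pointwise identity $\Psi^B(x,\hat w,\hat y)=(1-B)\tfrac{\mathrm d}{\mathrm dt}\log|w(t)|$, so
\begin{equation*}
\frac{1}{T}\int_0^T\Psi^B\big(x(t),\hat w(t),\hat y(t)\big)\,\mathrm dt=(1-B)\,\frac{1}{T}\big(\log|w(T)|-\log|w(0)|\big).
\end{equation*}
Taking $\limsup_{T\to\infty}$ and using boundedness of $w$, the right-hand side is $\le0$, whence $\overline{\Psi}^B(x_0,z^k_0,z^{k+1}_0)\le0$. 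Since $(x_0,z^k_0,z^{k+1}_0)$ was arbitrary, condition \cref{eq:PsiB cond} holds.

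Finally, recall from the proof of \cref{thm:dimcombined} that \cref{eq:PsiB cond} is equivalent to $B[\Mu_k(x_0)-\Mu_{k+1}(x_0)]-\Mu_k(x_0)\ge0$ for all $x_0\in\mathcal B$, and that $\sup_{x_0\in\mathcal B}\Mu_{k+1}(x_0)<0$ forces $k\ge j$ for the $j$ in \cref{def:ldim}. \Cref{rmk:dL upper bound} then yields $d_L\le k+B$, and minimizing over admissible $B$ and $V$ gives \cref{eq:shifted combined dimension}. The only step that is not bookkeeping is the projection identity linking \cref{eq:dimaugmentedshifted} and \cref{eq:augmenteddim1}, together with the observation that here the Lyapunov conditions need only deliver boundedness of $w$ --- not exponential decay --- which is exactly what is needed to make $\overline{\Psi}^B\le0$.
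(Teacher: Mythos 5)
Your proposal is correct and follows essentially the same route as the paper's proof: the Lyapunov-type constraints force $w(t)$ to remain bounded, the log-derivative of $|w|$ is identified pointwise with $\Phi_k+\tfrac{B}{1-B}\Phi_{k+1}$ along the trajectory, and \cref{rmk:dL upper bound} converts the resulting averaged inequality into $d_L\le k+B$. The only real difference is cosmetic: you normalize $w$ and $y^{k+1}$ onto spheres and route through condition \cref{eq:PsiB cond} (reusing the equivalence established in the proof of \cref{thm:dimcombined}), whereas the paper substitutes $y^k=|y^{k+1}|^{-B/(1-B)}w$ and concludes directly that $\Mu_k+\tfrac{B}{1-B}\Mu_{k+1}\le0$.
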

\begin{proof}
Let $B$ and $V(x,w,y^{k+1})$ satisfy the constraints in~\cref{eq:shifted combined dimension}. It suffices to show that $d_L\le k+B$. The inequality constraints are exactly the Lyapunov function conditions~\cref{eq:Lyapunov function conditions} in \cref{thm:lyapunovfunction}, written for the case where the nonlinear variables are $(x,y^{k+1})$ and evolve as in \cref{eq:dimaugmentedshifted}, and the linear variable $w$ evolves as $\ddt{}w=A(x,y^{k+1})w$ with the form of $A$ implied by  \cref{eq:pkB def}. Thus, by the first part of \cref{thm:lyapunovfunction}, the constraints in \cref{eq:shifted combined dimension} imply that $w(t)$ is bounded for all initial conditions $(x_0,w_0,y_0^{k+1})$. \Cref{lem:dL shift} then implies that $d_L\le k+B$, which completes the proof.
\end{proof}

\subsection{Sum-of-squares relaxations}
\label{sec:sos}

The upper bounds on LE sums and Lyapunov dimension formulated in \cref{sec:sphere,sec:shifted} are stated as, or can be reformulated as, minimizations subject to pointwise inequalities on $V$. When the ODE right-hand side $f(x)$ is polynomial, and when we restrict to auxiliary functions $V$ in some finite-dimensional space of polynomials, then the pointwise inequalities can be enforced using SOS conditions. This gives SOS optimization problems that are computationally tractable when the polynomial degrees are not too large, and whose minima give upper bounds on the desired quantities.  For the shifted spectrum approach of \cref{sec:shifted}, the SOS relaxations are immediate after choosing a polynomial space for $V$ and enforcing nonnegativity on some set $\Omega$ via membership in $\Sigma^\Omega$. For the sphere projection approach of \cref{sec:sphere}, the relevant minmax problems can be expressed as constrained minimizations, after which the SOS relaxations are simple. We review the necessary basics of SOS polynomials here.

In order for a polynomial $S:\mathbb{R}^n\to\mathbb{R}$ to satisfy $S(x)\ge0$ for all $X\in\mathbb{R}^n$, it suffices that $S\in \Sigma$, where $\Sigma$ denotes the set of polynomials that are representable as a sum of squares of other polynomials. The set of nonnegative polynomials strictly contains $\Sigma$, and the relationship between these sets has been extensively studied \citep{reznick2000some}. Often one wants to enforce $S(x)\ge0$ on a subset of $\mathbb{R}^n$ without requiring nonnegativity on all of $\mathbb{R}^n$. For instance, one may be interested in a set $\Omega$ with a `semiagebraic' definition in terms of polynomial equalities and inequalities,
\begin{equation}
\label{eq:semialg set}
\Omega = \left\{ X\in\mathbb{R}^n~:~g_i(X)\ge0\text{ for }i=1,\ldots,I,~h_j(X)=0\text{ for }j=1,\ldots,J \right\},
\end{equation}
where the $g_i\in\mathbb{R}[X]$ and $h_i\in\mathbb{R}[X]$ are given. Here $\mathbb{R}[X]$ denotes the ring of polynomials in $X$ with real coefficients, and $\mathbb{R}^n[X]$ denotes the vectors of such polynomials taking values in $\mathbb{R}^n$. There are various ways to define a set $\Sigma^\Omega$ of polynomials whose nonnegativity on $\Omega$ is implied by SOS conditions. Here we choose $\Sigma^\Omega$ to be the quadratic module generated by the polynomials that define~$\Omega$,
\begin{equation}
\label{eq:quad module}
\Sigma^\Omega = \left\{ \sigma_0+\sum\limits_{i=1}^I \sigma_ig_i+\sum\limits_{j=1}^J\rho_jh_j~:~\sigma_i\in\Sigma\text{ for }i=0,\ldots,I,~\rho_j\in\mathbb{R}[X]\text{ for }j=1,\ldots,J\right\}.
\end{equation}
Any polynomial in $\Sigma^\Omega$ must be nonnegative on $\Omega$ because $\sigma_0(X)\ge0$ at every $X\in\mathbb{R}^n$, while each $\sigma_i(X)g_i(X)\ge0$ and each $\rho_j(X)h_j(X)=0$  at every $X\in\Omega$. When $\Omega=\mathbb{R}^n$, there are no $g_i$ or $\rho_j$, so $\Sigma^\Omega=\Sigma$. In the preceding paper \cite{oeri2023convex}, the condition $S\in\Sigma^\Omega$ was expressed in a different but equivalent way: by saying there exist $\sigma_i$ and $\rho_j$ such that $S-\sum\sigma_ig_i-\sum\rho_jh_j$ is SOS.

Computational formulations enforce membership not only in $\Sigma^\Omega$ but in a finite-dimensional subset. The simplest choice is to enforce membership in~$\Sigma^\Omega_\nu$, which is the truncation of the quadratic module~\cref{eq:quad module} where each $\sigma_i$ and $\rho_j$ has polynomial degree no larger than $\nu$, for some $\nu$ in the set $\mathbb{Z}_\ge$ of nonnegative integers. This gives a hierarchy of computational formulations, in which raising $\nu$ cannot worsen results---and typically improves them---but makes computations more expensive. In practice, it is often useful to choose the spaces for $\sigma_i$ and $\rho_i$ more carefully. In our computations of \cref{sec:examples}, for instance, we impose symmetries and allow different maximum degrees in different variables. For simplicity in the present subsection, however, we write formulations in terms of the degree-$\nu$ truncation~$\Sigma^\Omega_\nu$.

\subsubsection{Sphere projection}
\label{sec:sphere sos}

The sphere projection approach of \cref{sec:sphere} is based on the general upper bound~\cref{eq:Phi-opt} on time averages, which can be expressed as
\begin{equation}
\max_{X_0\in \Omega}\overline{\Phi} 
\le \inf_{V\in C^1(\Omega)} B\quad \text{s.t.}\quad B- \Phi - F\cdot DV \ge0~\;\forall X\in\Omega 
\label{eq:X relax 1}.
\end{equation}
An SOS relaxation of the right-hand side gives an upper bound, stated in the first part of \cref{thm:TGD-sos} below. Furthermore, this upper bound is an equality if $\Omega$ is forward invariant and its semialgebraic specification~\cref{eq:semialg set} has the Archimedean property, as stated in the second part of \cref{thm:TGD-sos} below. The Archimedean property implies compactness of $\Omega$ and is only slightly stronger; any semialgebraic specification of compact $\Omega$ that is not Archimedean can be given this property by adding the polynomial $g_i(X)=R^2-|X|^2$ with a constant $R$ that is large enough to not change $\Omega$ itself. A proof of the second part of \cref{thm:TGD-sos} can be found in \cite[Theorem 1]{lakshmi2020finding}. Briefly, since $\Omega$ is compact and forward invariant, equality in~\cref{eq:X relax 1} is given by the main theorem of~\cite{tobasco2018optimal}, whose applicability on compact manifolds is addressed after~(8) in \cite{oeri2023convex}. Then, for each suboptimal $B$, there exists a polynomial $V$ such that $B-\Phi - F\cdot DV>0$ on $\Omega$, and Putinar's Positivstellensatz \cite[Lemma 4.1]{putinar1993positive} guarantees that $B-\Phi - F\cdot DV$ belongs to $\Sigma^\Omega$.

\begin{prop}
\label{thm:TGD-sos}
Let $X(t)$ evolve by the ODE \cref{eq:ODE}, where $F\in\mathbb{R}^n[X]$. Let  $\Omega\subset\mathbb{R}^n$ be a semialgebraic set that is forward invariant, and whose quadratic module is denoted $\Sigma^\Omega$. Let $\Phi\in\mathbb{R}[X]$.
\begin{enumerate}
\item If each trajectory $X(t)$ in $\Omega$ is bounded for $t\ge0$, then
\begin{equation}
\label{eq:X relax 2}
\sup_{X_0\in \Omega}\overline{\Phi} 
\le \inf_{V\in \mathbb{R}[X]} B\quad \text{s.t.}\quad B- \Phi - F\cdot DV \in \Sigma^\Omega. 
\end{equation}
\item If $\Omega$ is compact and its semialgebraic specification is Archimedean, then
\begin{equation}
\label{eq:X relax sharp}
\max_{X_0\in \Omega}\overline{\Phi} 
= \inf_{V\in \mathbb{R}[X]} B\quad \text{s.t.}\quad B- \Phi - F\cdot DV \in \Sigma^\Omega. 
\end{equation}
\end{enumerate}
\end{prop}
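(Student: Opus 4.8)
The plan is to derive both parts of \cref{thm:TGD-sos} from the already-established general bound~\cref{eq:X relax 1}, which is itself just a restatement of~\cref{eq:Phi-opt}. The key observation is that $\Sigma^\Omega$, by the argument given just after its definition in~\cref{eq:quad module}, consists of polynomials that are nonnegative on $\Omega$. Hence membership $B-\Phi-F\cdot DV\in\Sigma^\Omega$ is strictly stronger than the pointwise constraint $B-\Phi-F\cdot DV\ge0$ on $\Omega$ appearing in~\cref{eq:X relax 1}. For the first part, I would argue that any feasible pair $(B,V)$ for the SOS problem~\cref{eq:X relax 2}, with $V$ polynomial, is also feasible for the minimization in~\cref{eq:X relax 1} (with the additional restriction that $V$ be polynomial rather than merely $C^1$, which only shrinks the feasible set and hence only raises the infimum). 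Since tightening the feasible set of a minimization can only increase its value, the infimum in~\cref{eq:X relax 2} is at least the infimum in~\cref{eq:X relax 1}, which in turn is at least $\max_{X_0\in\Omega}\overline\Phi$ by the boundedness hypothesis and~\cref{eq:Phi-ineq}. This chain of inequalities gives~\cref{eq:X relax 2}.

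For the second part I would combine two facts. First, under the compactness and forward-invariance hypotheses, the main theorem of~\cite{tobasco2018optimal} gives that~\cref{eq:X relax 1} (equivalently~\cref{eq:Phi-opt}) holds with \emph{equality} when $V$ ranges over $C^1(\Omega)$; the applicability on compact embedded manifolds is the point addressed after~(8) in~\cite{oeri2023convex}, which I would cite verbatim. So it remains to show that restricting $V$ to polynomials and strengthening the pointwise constraint to membership in $\Sigma^\Omega$ does not increase the infimum. This is where Putinar's Positivstellensatz \cite[Lemma 4.1]{putinar1993positive} enters: since the semialgebraic specification of $\Omega$ is Archimedean, any polynomial that is \emph{strictly} positive on $\Omega$ lies in $\Sigma^\Omega$. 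Given any $\varepsilon>0$, equality in the $C^1$ problem produces a $C^1$ function $V_\varepsilon$ and a value $B_\varepsilon$ within $\varepsilon$ of the optimum with $B_\varepsilon-\Phi-F\cdot DV_\varepsilon\ge 0$ on $\Omega$; I would then perturb to $V_\varepsilon$ and $B_\varepsilon+\varepsilon$ so the expression is strictly positive, and approximate $V_\varepsilon$ in $C^1(\Omega)$ by a polynomial $\tilde V$ (Weierstrass, using compactness of $\Omega$) closely enough that $B_\varepsilon+2\varepsilon - \Phi - F\cdot D\tilde V$ remains strictly positive on $\Omega$, hence lies in $\Sigma^\Omega$ by Putinar. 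This makes $(B_\varepsilon+2\varepsilon,\tilde V)$ feasible for~\cref{eq:X relax sharp}, so its infimum is within $3\varepsilon$ of the optimal time average; letting $\varepsilon\to0$ and combining with the $\le$ direction from part one yields equality.

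The main obstacle is the perturbation-and-approximation step in part two: one must be careful that approximating $V_\varepsilon$ only in $C^0$ is not enough, since $F\cdot DV$ involves the gradient, so a $C^1$ (or at least gradient-norm) approximation is needed, and one must check that $F\cdot D(\cdot)$ is continuous from $C^1(\Omega)$ into $C^0(\Omega)$ uniformly enough on the compact set $\Omega$ — which it is because $F$ is a fixed polynomial and $\Omega$ is compact. A secondary subtlety is that the infimum over $V$ in the $C^1$ problem need not be attained, but this is exactly why the $\varepsilon$-suboptimal argument is used rather than working with an exact optimizer. Beyond that, everything reduces to the containment $\Sigma^\Omega\subset\{$polynomials nonnegative on $\Omega\}$ already spelled out in the text, and to quoting~\cite{tobasco2018optimal}, \cite{oeri2023convex} and~\cite{putinar1993positive}; I would also note that the remark about adding a ball constraint $R^2-|X|^2$ to make any compact specification Archimedean is what lets us assume the Archimedean hypothesis without loss of generality, mirroring the discussion preceding the proposition.
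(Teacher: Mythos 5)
Your proposal is correct and follows essentially the same route as the paper: part one by noting that membership in $\Sigma^\Omega$ strengthens the pointwise constraint in~\cref{eq:X relax 1}, and part two by combining the equality result of~\cite{tobasco2018optimal} with Putinar's Positivstellensatz~\cite{putinar1993positive}. The explicit $\varepsilon$-suboptimal perturbation and $C^1$ polynomial-approximation step you spell out is precisely the detail the paper leaves to its citation of~\cite[Theorem 1]{lakshmi2020finding}.
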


All of the upper bounds from \cref{sec:sphere} can be relaxed into SOS optimization problems by the same reasoning that leads from~\cref{eq:Phi-opt} to~\cref{eq:X relax 1} to~\cref{eq:X relax 2}. \Cref{thm:projectionmethod sos} below is an SOS relaxation of \cref{thm:projectionmethod} for bounding LE sums. Computations giving such bounds also imply bounds on Lyapunov dimension via \cref{thm:dimension_separate}, but potentially sharper bounds on dimension can be computed directly using \cref{thm:dimcombined sos}, which is an SOS relaxation of \cref{thm:dimcombined}. In the proposition statements, the domain $\mathcal{B}$ is assumed to have the semialgebraic specification
\begin{equation}
\label{eq:calB}
\mathcal{B}=\left\{ x\in\mathbb{R}^n~:~g_i(x)\ge0\text{ for }i=1,\ldots,I,~h_j(x)=0\text{ for }j=1,\ldots,J \right\}.
\end{equation}
This is understood to include the $\mathcal{B}=\mathbb{R}^n$ case when there are no $g_i$ or $h_j$. The $k=1$ special case of \cref{thm:projectionmethod sos} is essentially proposition~2 of \cite{oeri2023convex}, although the SOS conditions are more explicit in~\cite{oeri2023convex}, and the restriction to finite-dimensional polynomial spaces is more explicit here.

\begin{rmk}
In the following \cref{thm:projectionmethod sos,thm:dimcombined sos,thm:shiftedmethod sos,thm:shifted dimension sos}, for fixed $k\in\{1,\ldots,n\}$ and polynomial degrees $\nu$ and $\nu'$, the resulting upper bound $C_i(k,\nu,\nu')$ can be computed by solving SOS programs using standard software (see \cref{sec:examples}). For certain combinations $(k,\nu,\nu')$, there will not exist any $V$ in the chosen space of polynomials that satisfy the SOS constraints, in which case we understand the infimum over bounds to be $+\infty$, meaning that we obtain no upper bound. In \cref{thm:projectionmethod sos,thm:dimcombined sos}, the values $C_1$ and $C_2$ are found by solving a single SOS program in which the bound $B$ is minimized. In \cref{thm:shiftedmethod sos,thm:shifted dimension sos}, the value $B$ must be fixed in each SOS program, which determines feasibility or infeasibility of the constraints with this $B$ value. In these cases the minimum feasible $B$ is found by repeatedly solving SOS programs to find $C_3$ and $C_4$.
\end{rmk}

\begin{prop}
\label{thm:projectionmethod sos}
Let $x(t)$ evolve by the ODE \cref{eq:dynsys}, where $f\in\mathbb{R}^n[X]$ and  $\mathcal{B}\subset\mathbb{R}^n$ is a semialgebraic set that is forward invariant. Let $\Sigma^\Omega$ be the quadratic module~\cref{eq:quad module} for the set $\Omega=\mathcal{B}\times\mathbb{S}^{\bn{n}{k}-1}$ whose semialgebraic specification includes the polynomial constraints on $x$ in~\cref{eq:calB} and the equality $\big| z^k\big|-1=0$. Let $ \Phi_k(x,z^k)$ and $\ell_k(x,z^k)$ be as defined in \cref{eq:Phik} and \cref{eq:ell}, respectively. 
\begin{enumerate}
\item Suppose each trajectory $x(t)$ in $\mathcal{B}$ is bounded for $t\ge0$. For each $k\in\{1,\ldots,n\}$, and each pair of polynomial degrees $(\nu,\nu')$, the maximum leading LE sum $\Mu_k$ among trajectories in $\mathcal{B}$ is bounded above by
\begin{equation}
\label{eq:Sigmak sos ineq}
\sup_{x_0\in \mathcal{B}} \Mu_k(x_0) \leq C_1(k,\nu,\nu'),
\end{equation}
where
\begin{equation}
\label{eq:C1 def}
C_1(k,\nu,\nu')=\inf_{V\in \mathbb{R}[x,z^k]_\nu}B\quad\text{s.t.}\quad B-\Phi_k - f\cdot D_xV-\ell_k\cdot D_{z^k}V\in\Sigma^\Omega_{\nu'}.
\end{equation}
    \item Suppose $\mathcal{B}$ is compact and its semialgebraic specification is Archimedean. Then, for each $k$,
\begin{equation}
\label{eq:Sigmak sos eq}
\sup_{x_0\in \mathcal{B}} \Mu_k(x_0) = \sup_{\nu,\nu'\in\mathbb{Z}_\ge} C_1(k,\nu,\nu').
\end{equation}
\end{enumerate}
\end{prop}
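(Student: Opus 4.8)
The plan is to obtain the first part as an immediate consequence of \cref{thm:projectionmethod} once polynomiality is noted, and the second part by combining the monotonicity of the SOS hierarchy with the exactness statement in the second part of \cref{thm:TGD-sos}.

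For the first part, the key preliminary observation is that every term in the constraint of~\cref{eq:C1 def} is a polynomial in $(x,z^k)$. Since $f\in\mathbb{R}^n[x]$, the Jacobian $Df$ is polynomial, and by the explicit formulas for additive compounds in \cref{sec:compounds} so is $Df^{[k]}$; hence $\Phi_k$ in~\cref{eq:Phik} and $\ell_k$ in~\cref{eq:ell} are polynomial, and for $V\in\mathbb{R}[x,z^k]_\nu$ the Lie-derivative terms $f\cdot D_xV$ and $\ell_k\cdot D_{z^k}V$ are polynomial too. Thus $B-\Phi_k-f\cdot D_xV-\ell_k\cdot D_{z^k}V$ is a polynomial, and membership in $\Sigma^\Omega_{\nu'}\subseteq\Sigma^\Omega$ implies it is nonnegative on $\Omega=\mathcal B\times\mathbb S^{n_k-1}$, i.e.\ $B\ge\sup_{(x,z^k)\in\Omega}[\Phi_k+f\cdot D_xV+\ell_k\cdot D_{z^k}V]$. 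Because a polynomial $V$ is in particular $C^1$, the first part of \cref{thm:projectionmethod} — whose hypotheses hold here, as a polynomial $f$ is $C^1$, $\mathcal B$ is forward invariant, and trajectories are bounded — gives $\sup_{x_0\in\mathcal B}\Mu_k(x_0)\le B$; taking the infimum over $(V,B)$ feasible in~\cref{eq:C1 def} yields~\cref{eq:Sigmak sos ineq}.

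For the second part, I would first note that raising $\nu$ or $\nu'$ enlarges the feasible set in~\cref{eq:C1 def}, so $C_1(k,\nu,\nu')$ is non-increasing in each argument and its limit as $\nu,\nu'\to\infty$ equals $\inf_{\nu,\nu'}C_1(k,\nu,\nu')$, which by the first part is at least $\sup_{x_0}\Mu_k$. It remains to prove the matching upper bound. The central step is to apply the second, exactness part of \cref{thm:TGD-sos} to the augmented system with $X=(x,z^k)$, $F=(f,\ell_k)$ the right-hand side of~\cref{eq:projectedaugmented}, $\Phi=\Phi_k$, and $\Omega=\mathcal B\times\mathbb S^{n_k-1}$. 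One must check its hypotheses: $F$ is polynomial (as above); $\Omega$ is compact since $\mathcal B$ is and the sphere is; $\Omega$ is forward invariant since $\mathcal B$ is and since, by construction of $\ell_k$, $\tfrac{\mathrm d}{\mathrm dt}|z^k|^2=2(z^k)^{\mathsf T}\ell_k=2\Phi_k(1-|z^k|^2)$ vanishes on $|z^k|=1$; and the semialgebraic description of $\Omega$ is Archimedean because that of $\mathcal B$ supplies some $N^2-|x|^2\in\Sigma^{\mathcal B}\subseteq\Sigma^\Omega$ while $1-|z^k|^2\in\Sigma^\Omega$ (being minus the sphere equality constraint), so their sum $(N^2+1)-|x|^2-|z^k|^2\in\Sigma^\Omega$ certifies boundedness; applicability of the theorem of~\cite{tobasco2018optimal} on the compact manifold $\Omega$ is as discussed in~\cite{oeri2023convex}. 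Together with the identity~\cref{eq:Sigmak Phik} relating $\Mu_k$ to the time average of $\Phi_k$, \cref{thm:TGD-sos} then gives $\sup_{x_0}\Mu_k=\inf_{V\in\mathbb{R}[x,z^k]}\{B:B-\Phi_k-f\cdot D_xV-\ell_k\cdot D_{z^k}V\in\Sigma^\Omega\}$ with the \emph{untruncated} quadratic module.

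It then suffices to pass from this untruncated identity to the truncated hierarchy. Given $B>\sup_{x_0}\Mu_k$, the identity supplies $V\in\mathbb{R}[x,z^k]$ together with a representation $B-\Phi_k-f\cdot D_xV-\ell_k\cdot D_{z^k}V=\sigma_0+\sum_i\sigma_i g_i+\rho(|z^k|^2-1)$ with $\sigma_i\in\Sigma$ and $\rho\in\mathbb{R}[x,z^k]$; as $V$ and these multipliers have finite degree, $(V,B)$ becomes feasible in~\cref{eq:C1 def} for all $\nu,\nu'$ beyond some thresholds, so $C_1(k,\nu,\nu')\le B$ there. Letting $\nu,\nu'\to\infty$ and then $B\downarrow\sup_{x_0}\Mu_k$ gives $\lim_{\nu,\nu'\to\infty}C_1(k,\nu,\nu')\le\sup_{x_0}\Mu_k$, which with the first part proves~\cref{eq:Sigmak sos eq}. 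I expect the only genuinely fiddly points to be the polynomiality bookkeeping and the verification that the product domain $\mathcal B\times\mathbb S^{n_k-1}$ inherits the Archimedean property from $\mathcal B$; the remainder is a routine assembly of \cref{thm:projectionmethod} and \cref{thm:TGD-sos}.
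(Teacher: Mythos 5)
Your proof is correct and takes essentially the same route as the paper: part 1 is the SOS relaxation of \cref{thm:projectionmethod} (equivalently a special case of \cref{eq:X relax 2}), and part 2 is the special case of the exactness statement \cref{eq:X relax sharp} in \cref{thm:TGD-sos} applied on the compact, forward-invariant, Archimedean product domain $\mathcal{B}\times\mathbb{S}^{n_k-1}$, followed by passing from the untruncated quadratic module to the degree-truncated hierarchy. The only remark worth adding is that what you actually establish (correctly) is that $\inf_{\nu,\nu'}C_1(k,\nu,\nu')=\sup_{x_0\in\mathcal{B}}\Mu_k(x_0)$; the $\sup$ over $\nu,\nu'$ written in \cref{eq:Sigmak sos eq} appears to be a typo for $\inf$, as in the analogous \cref{eq:C3 eq}.
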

\begin{proof}
The first part is an SOS relaxation of the upper bound~\cref{eq:Sigmak C1 ineq} on $\Mu_k$ for each $k$. Equivalently, it is a particular case of~\cref{eq:X relax 2} with $X$, $F$ and $\Phi$ as described above in \cref{thm:projectionmethod}, where $V$ is further constrained by having degree no larger than $\nu$, and where $\sigma_i$ and $\rho_j$ in the quadratic module representation~\cref{eq:quad module} have degrees no larger than $\nu'$. The second part is the analogous special case of~\cref{eq:X relax sharp}. This equality is applicable because $\mathcal{B}$ is specified as an Archimedean subset of $\mathbb{R}^n$, and so adding $|z^k|-1=0$ gives an Archimedean specification of $\Omega$ in $\mathbb{R}^n\times\mathbb{R}^{\bn{n}{k}}$. All polynomial degrees are admitted in~\cref{eq:X relax sharp}, which is equivalent to the supremum over $\nu$ and $\nu'$ in~\cref{eq:Sigmak sos eq}.
\end{proof}
\begin{prop}
\label{thm:dimcombined sos}
Let $x(t)$ evolve by the ODE \cref{eq:dynsys}, where $f\in\mathbb{R}^n[X]$ and  $\mathcal{B}\subset\mathbb{R}^n$ is a semialgebraic set that is forward invariant. Let the positive integer $k< n$ be such that $\sup_{x_0\in\mathcal{B}}\Mu_{k+1}(x_0)<0$. Let $\Sigma^\Omega$ be the quadratic module~\cref{eq:quad module} for the set $\Omega=\mathcal{B}\times\mathbb{S}^{\bn{n}{k}-1}\times\mathbb{S}^{\bn{n}{k+1}-1}$, whose semialgebraic specification includes the polynomial constraints on $x$ in~\cref{eq:calB} and the equalities $\big| z^k\big|-1=0$ and $\big| z^{k+1}\big|-1=0$. Let $ \Phi_k(x,z^k)$ and $\ell_k(x,z^k)$ be defined as in \cref{eq:Phik} and \cref{eq:ell}, respectively, and likewise for  $ \Phi_{k+1}(x,z^{k+1})$ and $\ell_{k+1}(x,z^{k+1})$.
\begin{enumerate}
\item Suppose each trajectory $x(t)$ in $\mathcal{B}$ is bounded for $t\ge0$. For each pair of polynomial degrees $(\nu,\nu')$, the maximum Lyapunov dimension $d_L$ among trajectories in $\mathcal{B}$ is bounded above by
\begin{equation}
\label{eq:C2 ineq}
d_L\le k + C_2(k,\nu,\nu',0)
\end{equation}
where
\begin{equation}
\label{eq:C2 def}
C_2(k,\nu,\nu',\varepsilon)=\hspace{-6pt}\inf\limits_{\substack{B\in[0,1]\\V\in \mathbb{R}[x,z^k,z^{k+1}]_\nu}}\hspace{-10pt}B
\quad \text{s.t.}\quad
\varepsilon-[\Psi^B + f\cdot D_xV+\ell_k\cdot D_{z^k}V +\ell_{k+1}\cdot D_{z^{k+1}}V] \in \Sigma^\Omega_{\nu'},
\end{equation}
with $C_2=+\infty$ if the constraint set is empty.
\item Suppose $\mathcal{B}$ is compact and its semialgebraic specification is Archimedean. Let $j$ be the minimum admissible $k$, as in \cref{def:ldim} for $d_L$. Then,
\begin{equation}
\label{eq:eq:dL eq sphere}
d_L= j~ +\inf_{\substack{\nu,\nu'\in\mathbb{Z}{_\ge}\\\varepsilon\ge0}}
C_2(j,\nu,\nu',\varepsilon).
\end{equation}
\end{enumerate}
\end{prop}
\begin{proof}
The first part follows from the upper bound~\cref{eq:dL bounds 1} on $d_L$ because the SOS constraint in~\cref{eq:C2 def} with $\varepsilon=0$ implies the nonpositivity constraint on the right-hand side of~\cref{eq:dL bounds 1}. For the second part, we first establish upper and lower bounds on $C_2$ in terms of $d_L$. For the lower bound, note that the constraint in~\cref{eq:C2 def} implies the constraint in the definition~\cref{eq:Beps} of $B_\varepsilon$. This implies $B_\varepsilon\le C_2(j,\nu,\nu',\varepsilon)$, with which the first inequality in~\cref{eq:dL bounds 2} gives 
\begin{equation}
\label{eq:C2 lower bound}
d_L~-~\varepsilon\,\left|\sup_{x_0\in\mathcal{B}}\Mu_{j+1}(x_0)\right|^{-1}
\le j+C_2(j,\nu,\nu',\varepsilon)
\end{equation}
for all $\varepsilon>0$ and $\nu,\nu'\in\mathbb{Z}_\ge$. Minimizing over these parameters gives
\begin{equation}
\label{eq:C2 lower bound 2}
d_L\le j~ +\inf_{\substack{\nu,\nu'\in\mathbb{Z}{_\ge}\\\varepsilon>0}}
C_2(j,\nu,\nu',\varepsilon),
\end{equation}
and this inequality remains true when the constraints allow $\varepsilon=0$, due to~\cref{eq:C2 ineq}.

For the upper bound on $C_2$, note that since $\mathcal{B}$ has an Archimedean specification, so does $\Omega$. Thus we can apply the general SOS equality~\cref{eq:X relax sharp}, in particular with $F(X)$ being the right-hand side of the ODE system~\cref{eq:augmenteddim1}, with $\Phi(X)$ being $\Psi^B$, and fixing $k=j$. This gives
\begin{equation}
\label{eq:dL sphere sos pf 1}
\sup_{\substack{x_0\in\mathcal{B}~~~~~\\z^j_0\in\mathbb{S}^{\bn{n}{j}-1}\\z^{j+1}_0\in\mathbb{S}^{\bn{n}{j+1}-1}}}
\hspace{-14pt}\overline{\Psi}^B =
\inf_{V\in \mathbb{R}[x,z^j,z^{j+1}]}\hspace{-12pt}\varepsilon
\quad\text{s.t.}\quad
\varepsilon-\left[\Psi^B + f\cdot D_xV+\ell_j\cdot D_{z^j}V +\ell_{j+1}\cdot D_{z^{j+1}}V\right]\in\Sigma^\Omega.
\end{equation}
Note that the variable called $B$ in~\cref{eq:X relax sharp} is $\varepsilon$ in~\cref{eq:dL sphere sos pf 1}, and here $B$ has a different meaning. Let $B$ be such that the constraint holds in expression~\cref{eq:dL sphere pf 2} for $d_L$. This constraint is equivalent to both sides of~\cref{eq:dL sphere sos pf 1} being nonpositive, and nonpositivity of the right-hand infimum in~\cref{eq:dL sphere sos pf 1} implies that there exists polynomial $V$ satisfying the SOS constraint for any $\varepsilon>0$. Since such $V$ exists for any $B>d_L-j$ , 
\begin{equation}
d_L-j~\ge\inf_{\substack{B\in[0,1]\\V\in \mathbb{R}[x,z^j,z^{j+1}]}}\hspace{-8pt}B
\quad\text{s.t.}\quad
\varepsilon-\left[\Psi^B + f\cdot D_xV+\ell_j\cdot D_{z^j}V +\ell_{j+1}\cdot D_{z^{j+1}}V\right]\in\Sigma^\Omega
\end{equation}
for all $\varepsilon>0$. Restricting the right-hand minimization to $V$ of degree $\nu$ and to $\sigma_i$ and $\rho_j$ of degree $\nu'$ gives the definition of $C_2(j,\nu,\nu',\varepsilon)$. Thus, for all $\varepsilon>0$,
\begin{equation}
\label{eq:C2 upper bound}
j~+\inf_{\nu,\nu'\in\mathbb{Z{_\ge}}}C_2(j,\nu,\nu',\varepsilon) \le d_L.
\end{equation}
(This inequality need not hold with any finite $\nu$ and $\nu'$.) The upper bound~\cref{eq:C2 upper bound} on $C_2$, combined with the lower bound~\cref{eq:C2 lower bound 2}, establishes the claim~\cref{eq:eq:dL eq sphere} of the second part and completes the proof.
\end{proof}

For each fixed $(k,\nu,\nu')$, the minimization problems defining $C_1$ in~\cref{eq:C1 def} and $C_2$ in~\cref{eq:C2 def} are SOS programs. That is, polynomials of finite degree are constrained to be SOS, and these polynomials, as well as the optimization objective, are affine in the optimization parameters. Here the optimization parameters are $B$ and the coefficients of $V$ (which depend on the basis in which $V$ is represented). Such SOS programs are computationally tractable so long as the number of variables and the polynomial degrees in the SOS constraints are not too large. Increasing $\nu$ and $\nu'$ gives a hierarchy of SOS programs with increasing computational cost whose minima cannot increase. Typically these minima decrease as $\nu$ and $\nu'$ are raised, and the second part of each proposition guarantees convergence to the sharp bound under mild conditions.

\subsubsection{Shifted spectrum}
\label{sec:shifted sos}

Now we give the SOS formulations for the shifted spectrum approach. \Cref{thm:shiftedmethod sos} below is an SOS relaxation of \cref{thm:shiftedmethod} for bounding LE sums. 
Computations giving such bounds also imply bounds on Lyapunov dimension via \cref{thm:dimension_separate}, but potentially sharper bounds on dimension can be computed directly using \cref{thm:shifted dimension sos}, which is an SOS relaxation of \cref{thm:shifted dimension}. We do not prove that \cref{thm:shifted dimension sos} gives sharp results in general, as we have for our other methods. We also omit its short proof that, as in the first part of \cref{thm:shiftedmethod sos}, simply strengthens nonnegativity conditions into SOS conditions.

\begin{prop}
\label{thm:shiftedmethod sos}
Let $x(t)$ evolve by the ODE \cref{eq:dynsys}, where $f\in\mathbb{R}^n[X]$ and  $\mathcal{B}\subset\mathbb{R}^n$ is a semialgebraic set that is forward invariant. Let $\Sigma^\Omega$ be the quadratic module~\cref{eq:quad module} for the set $\Omega=\mathcal{B}\times\mathbb{R}^{\bn{n}{k}}$, using the same constraints~\cref{eq:calB} that specify $\mathcal{B}$. Let $m^B_k(x,w^k)$ be as defined by \cref{eq:ellB}.
\begin{enumerate}
\item Suppose each trajectory $x(t)$ in $\mathcal{B}$ is bounded for $t\ge0$. For each $k\in\{1,\ldots,n\}$, and each pair of polynomial degrees $(\nu,\nu')$, the maximum leading LE sum $\Mu_k$ among trajectories in $\mathcal{B}$ is bounded above by
\begin{equation}
\label{eq:C3 ineq}
\sup_{x_0\in \mathcal{B}} \Mu_k(x_0) \leq C_3(k,\nu,\nu'),
\end{equation}
where
\begin{equation}
\label{eq:C3 def}
C_3(k,\nu,\nu') = \inf_{V\in \mathbb{R}[x,w^k]_\nu}B
\quad
\text{s.t.}\quad \begin{array}[t]{r}
V-|w^k|^2\in\Sigma^\Omega_{\nu'}\phantom{.} \\
-[f\cdot D_xV+m_k^B\cdot D_{w^k}V]\in\Sigma^\Omega_{\nu'}.
\end{array}
\end{equation}
\item Suppose $\mathcal{B}$ is compact and its semialgebraic specification is Archimedean. Then, for each $k$,
\begin{equation}
\label{eq:C3 eq}
\sup_{x_0\in \mathcal{B}} \Mu_k(x_0) = \inf_{\nu,\nu'\in\mathbb{Z}_\ge} C_3(k,\nu,\nu'),
\end{equation}
where the infimum in \cref{eq:C3 def} is restricted to $V(x,w^k) = (w^k)^\mathsf{T} U(x) w^k$ with symmetric polynomial matrix $U\in \mathbb{R}^{\bn{n}{k}\times \bn{n}{k}}[x]$, and where $\sigma_i(x,w^k)$ and $\rho_j(x,w^k)$ in the SOS representation~\cref{eq:quad module} are likewise quadratic in~$w^k$.
\end{enumerate}
\end{prop}
\begin{proof}
The first part follows from the upper bound~\cref{eq:Sigmak C1 ineq shifted} on $\Mu_k$ because the SOS constraints in~\cref{eq:C3 def} imply the nonnegativity constraints in~\cref{eq:Sigmak C1 ineq shifted}. For the second part, let $B_1$ and $B_2$ be arbitrary with $B_1>B_2>\sup_{x_0\in \mathcal{B}} \Mu_k(x_0)$. It suffices to show that there exists a polynomial matrix $U(x)$ such that $V(x,w^k) = (w^k)^\mathsf{T} U(x) w^k$ satisfies the SOS constraints of~\cref{eq:C3 def} for $B=B_1$ with no restrictions on polynomial degree. 

The second part of~\cref{thm:shiftedmethod} guarantees the existence of a symmetric $U\in C^1(\mathbb{R}^n,\mathbb{R}^{\bn{n}{k}\times \bn{n}{k}})$ for which $V=(w^k)^\mathsf{T} U w^k$ satisfies the pointwise inequalities in~\cref{eq:Sigmak C1 ineq shifted} with $B=B_2$. Equivalently, this $U$ satisfies $(w^k)^\mathsf{T}Z_i(x)w^k\ge0$ for all $x\in\mathcal{B}$ and
\begin{align}
Z_1(x) &= U(x)-I_{\bn{n}{k}},\\
Z_2(x) &= -f(x)\cdot D_xU(x)+2\,U(x)[B\,I_{\bn{n}{k}}-Df^{[k]}(x)],
\end{align}
where $B=B_2$ and $I_{\bn{n}{k}}$ denotes the $\bn{n}{k}\times \bn{n}{k}$ identity matrix. This, in turn, is equivalent to both $Z_i(x)$ being positive semidefinite for all $x\in\mathcal{B}$. We instead want both $Z_i(x)$ to be strictly positive definite on $\mathcal{B}$, which is achieved for $Z_1$ by scaling $U$ larger and is then achieved for $Z_2$ by taking $B=B_1$.

Because there exists $U\in C^1$ such that both $Z_i(x)$ are positive definite on $\mathcal{B}$, and $\mathcal{B}$ is compact, there also exists a polynomial matrix $U\in\mathbb{R}^{\bn{n}{k}\times \bn{n}{k}}[x]$ for which both $Z_i(x)$ are positive definite. This follows from a strengthened version of the Stone-Weierstrass approximation theorem~\cite{Lorentz1986approximation}. This polynomial $U$ satisfies the SOS constraints of~\cref{eq:C3 def} due to $\mathcal{B}$ being Archimedean and $Z_i$ being positive definite on~$\mathcal{B}$. Recalling that $\mathcal B$ is the set where all $g_i(x)\ge0$ and $h_j(x)=0$, applying a matrix Positivstellensatz \cite[theorem 2]{scherer2006matrix} guarantees existence of polynomial matrices $S_i(x)$ and $T_i(x)$, with the $S_i(x)$ being squares of other polynomial matrices, such that
\begin{equation}
Z_1(x)=S_0(x)+\sum_{i=1}^Ig_i(x)S_i(x)+\sum_{j=1}^Jh_j(x)T_j(x),
\end{equation}
and likewise for $Z_2$. The existence of such representations implies that both $(w^k)^\mathsf{T}Z_iw^k$ belong to the quadratic module $\Sigma^\Omega$. Therefore $V=(w^k)^\mathsf{T}Uw^k$ satisfies the constraints of~\cref{eq:C3 def} with $B=B_1$, and the second part of the proposition is proved.
\end{proof}
\begin{prop}
\label{thm:shifted dimension sos}
Let $x(t)$ evolve by the ODE \cref{eq:dynsys}, where $f\in\mathbb{R}^n[X]$ and  $\mathcal{B}\subset\mathbb{R}^n$ is a semialgebraic set that is forward invariant. Assume each trajectory $x(t)$ in $\mathcal{B}$ is bounded for $t\ge0$. Let the positive integer $k< n$ be such that $\sup_{x_0\in\mathcal{B}}\Mu_{k+1}(x_0)<0$. Let $\Sigma^\Omega$ be the quadratic module~\cref{eq:quad module} for the set $\Omega=\mathcal{B}\times\mathbb{R}^{\bn{n}{k+1}}\times\mathbb{R}^{\bn{n}{k}}$, using the same constraints~\cref{eq:calB} that specify $\mathcal{B}$. For each $k\in\{1,\ldots,n\}$, and each pair of polynomial degrees $(\nu,\nu')$, the maximum Lyapunov dimension $d_L$ among trajectories in $\mathcal{B}$ is bounded above by
\begin{equation}
d_L\le k + C_4(k,\nu,\nu'),
\end{equation}
where
\begin{equation}
\label{eq:C4 def}
C_4(k,\nu,\nu')=\inf_{\substack{B\in [0,1]\\V\in \mathbb{R}[x,y^{k+1},w]_\nu}}B\quad\text{s.t.}\quad 
\begin{array}[t]{r}
V-\left|w\right|^2\in\Sigma^\Omega_{\nu'}\phantom{,} \\
(1-B)\left| y^{k+1}\right|^2
p_k^B \in\Sigma^\Omega_{\nu'},
\end{array}
\end{equation}
with $C_4=+\infty$ if the constraint set is empty.
\end{prop}

\subsection{Symmetry exploitation}
\label{sec:syms}
If an ODE being studied using auxiliary function methods has symmetry, then in many cases the same symmetry can be imposed on the auxiliary functions, leading to symmetries in SOS expressions. Symmetries in SOS programs can be exploited computationally by block diagonalizing the corresponding semidefinite programs (SDPs) \citep{gatermann2004symmetry}. An ODE system~\cref{eq:ODE} with right-hand side $F(X)$ is said to be invariant under a map $\Lambda:\mathbb{R}^n\to\mathbb{R}^n$ if $F$ is $\Lambda$-equivariant, meaning that $F(\Lambda X) = \Lambda F(X)$ for all $X$. In such cases, $X(t)$ solves the ODE if and only if $\Lambda X(t)$ does. 
For any ODE, the set of such transformations forms a group. We focus on the common situation where each $\Lambda$ is an orthogonal linear transformation on $\mathbb{R}^n$, meaning the group of such $\Lambda$ is a subgroup of the orthogonal group $O(n)$. (In some contexts the set of $\Lambda$ would be called the linear representation of a group, rather than a group itself, but we do not draw this distinction.) In many formulations of auxiliary function methods for ODEs, if the ODE is invariant under $\Lambda$ then the results are provably unchanged if the optimization is over the restricted set of $V$ which are $\Lambda$-invariant, meaning $V(X)=V(\Lambda X)$ for all $X$. Such a result in the context of bounding time averages, which is proved in the appendix of \cite{oeri2023convex}, justifies imposing symmetries on $V$ in all of our sphere projection methods. An analogous argument holds for the Lyapunov function conditions in \cref{thm:lyapunovfunction}, and this justifies imposing symmetries on $V$ in all of our shifted spectrum methods.

The particular symmetries that can be imposed on $V$, and on the polynomials $\sigma_i$ and $\rho_j$ appearing in Positivstellans\"atze, follow from the symmetries of ODEs on augmented state space that underlie our various formulations. First, regardless of whether the $x$ ODE has any symmetry, the augmented ODEs have a sign-change symmetry due to linearity of the $y^k$ dynamics. These sign-change symmetries are summarized by \cref{thm:signsymm}. Second, if the right-hand side $f(x)$ of the $x$ ODE is equivariant under an orthogonal transformation $\Lambda$, this induces corresponding equivariance on the right-hand sides of the augmented ODEs, as summarized by \cref{thm:generalsymm}. In the proposition, $\Lambda^{(k)}$ denotes the $k^\mathrm{th}$ multiplicative compound of $\Lambda$ (see\ \cref{sec:compounds}), $I_n$ denotes the $n\times n$ identity matrix, and $\diag(A,B,\dots)$ denotes a block diagonal matrix with square blocks $A,B,\ldots$.
\begin{prop}
\label{thm:signsymm}
    Let $\ddt{}x(t)=f(x(t))$ with $f:\mathcal{B}\to\mathbb{R}^n$. The augmented ODEs are invariant under the following orthogonal transformations.
    \begin{enumerate}
        \item The ODE system \cref{eq:projectedaugmented} is invariant under $\diag(I_n, -I_{\bn{n}{k}})$.
        \item The ODE system \cref{eq:augmenteddim1} is invariant under $\diag(I_n, \pm I_{\bn{n}{k}}, \mp I_{\bn{n}{k+1}})$.
        \item The ODE system \cref{eq:shiftedaugmented} is invariant under $\diag(I_n, -I_{\bn{n}{k}})$.
        \item The ODE system \cref{eq:dimaugmentedshifted} is invariant under $\diag(I_n,\pm I_{\bn{n}{k}},\mp I_{\bn{n}{k+1}})$.
    \end{enumerate}
\end{prop}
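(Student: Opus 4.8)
The plan is to verify, for each of the four systems, that the stated transformation $\Lambda$ is orthogonal and that the system's right-hand side $F$ is $\Lambda$-equivariant, i.e.\ $F(\Lambda X)=\Lambda F(X)$ for all $X$; by the discussion at the start of \cref{sec:syms}, equivariance is precisely what it means for the ODE to be invariant under $\Lambda$. Orthogonality is immediate in every case, since each $\Lambda$ is block diagonal with blocks $\pm I$. So the entire content of the proposition is the equivariance check, which reduces to a parity count on each block of $F$.

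The key observation is that the tangent-space dynamics in each augmented system are either linear or cubic-homogeneous in the tangent variables, while the $x$-block of $F$ is always just $f(x)$ and hence unaffected by any sign flip of tangent variables. Concretely: (i) in $\ell_k(x,z^k)$ of \cref{eq:ell}, the term $Df^{[k]}(x)z^k$ is linear in $z^k$ and the term $\Phi_k(x,z^k)z^k$ is cubic in $z^k$, because $\Phi_k(x,z^k)=(z^k)^\mathsf{T}Df^{[k]}(x)z^k$ is quadratic, hence \emph{even}, in $z^k$; thus $\ell_k(x,-z^k)=-\ell_k(x,z^k)$, and likewise $\ell_{k+1}$ is odd in $z^{k+1}$. (ii) The map $m^B_k(x,w^k)=Df^{[k]}(x)w^k-Bw^k$ of \cref{eq:ellB} is linear, hence odd, in $w^k$. (iii) In $S_k^B(x,y^{k+1},w)$ of \cref{eq:SkB def}, the Rayleigh-quotient prefactor $(y^{k+1})^\mathsf{T}Df^{[k+1]}(x)y^{k+1}/|y^{k+1}|^2$ is even (degree $0$) in $y^{k+1}$, so $S_k^B$ is odd in $w$ and even in $y^{k+1}$, while $Df^{[k+1]}(x)y^{k+1}$ is odd in $y^{k+1}$.

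Given these facts the four cases are one-line verifications. For \cref{eq:projectedaugmented} with $\Lambda=\diag(I_n,-I_{n_k})$: the $x$-block of $F$ is unchanged and the $z^k$-block acquires an overall sign by oddness of $\ell_k$, matching $\Lambda F$. For \cref{eq:augmenteddim1}: flipping $z^k$ alone flips only $\ell_k$ (since $f$ and $\ell_{k+1}$ do not involve $z^k$), and flipping $z^{k+1}$ alone flips only $\ell_{k+1}$; hence both transformations $\diag(I_n,\pm I_{n_k},\mp I_{n_{k+1}})$, and their product, are equivariances. For \cref{eq:shiftedaugmented} the argument is identical to \cref{eq:projectedaugmented} with $\ell_k$ replaced by the linear map $m^B_k$. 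For \cref{eq:dimaugmentedshifted} with $\Lambda=\diag(I_n,\pm I_{n_{k+1}},\mp I_{n_k})$: flipping $w$ alone flips only the $w$-block (oddness of $S_k^B$ in $w$), whereas flipping $y^{k+1}$ alone flips the $y^{k+1}$-block (oddness of $Df^{[k+1]}y^{k+1}$) and leaves the $w$-block fixed (evenness of $S_k^B$ in $y^{k+1}$) — which is exactly why the sign on the $n_k$-block is opposite to the sign on the $n_{k+1}$-block.

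There is essentially no obstacle: the proof is routine sign bookkeeping. The only point requiring care is correctly identifying the parity of $\Phi_k$ and of the Rayleigh-quotient prefactor of $S_k^B$ as \emph{even} (not odd) in their tangent arguments. One minor technical remark worth including is that the quotient in \cref{eq:SkB def} is defined only for $y^{k+1}\ne0$, so equivariance is asserted on that domain; this is harmless because the sign flip is a bijection of the domain.
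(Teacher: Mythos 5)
Your proposal is correct and follows essentially the same route as the paper: a direct equivariance check based on the parity of each block of the augmented right-hand side, using that $\Phi_k$ and the Rayleigh-quotient prefactor of $S_k^B$ are even in the tangent variables while the remaining tangent-space terms are linear, hence odd. The paper writes out only part 4 and notes the others are analogous, whereas you spell out all four cases (and add the harmless remark about $y^{k+1}\neq 0$), but the underlying argument is identical.
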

\begin{proof}
    All four parts have very similar proofs, so we prove only the last. The claim amounts to the right-hand side of \cref{eq:dimaugmentedshifted} being equivariant under negation of $y^{k+1}$. For $f(x)$ this statement is trivial. The requirement that $Df^{[k+1]}(x)(-y^{k+1})=-Df^{[k+1]}(x)y^{k+1}$ is also immediate. The final requirement that $p_k^B(x,w,-y^{k+1}) = p_k^B(x,w,y^{k+1})$ holds because $y^{k+1}$ appears quadratically in both the numerator and denominator in \cref{eq:pkB def}.
\end{proof}
\begin{prop}
\label{thm:generalsymm}
    Let $\mathcal{B}\subset\mathbb{R}^n$ be invariant under $\Lambda\in O(n)$. If $\ddt{}x(t)=f(x(t))$ on $\mathcal{B}$ is invariant under $\Lambda$, then the augmented ODEs are invariant under the following orthogonal transformations.
    \begin{enumerate}
        \item The ODE system \cref{eq:projectedaugmented} is invariant under $\diag(\Lambda,\Lambda^{(k)})$. 
        \item The ODE system \cref{eq:augmenteddim1} is invariant under $\diag(\Lambda, \Lambda^{(k)}, \Lambda^{(k+1)})$. 
        \item The ODE system \cref{eq:shiftedaugmented} is invariant under $\diag(\Lambda, \Lambda^{(k)})$. 
        \item The ODE system \cref{eq:dimaugmentedshifted} is invariant under $\diag(\Lambda, \Lambda^{(k)}, \Lambda^{(k+1)})$. 
    \end{enumerate}
\end{prop}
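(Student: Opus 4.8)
The plan is to check, for each of the four systems, that its right-hand side $F$ is equivariant under the stated block-diagonal transformation $T$, i.e.\ $F(TX)=TF(X)$; by the discussion opening \cref{sec:syms} this is precisely what ``invariance under $T$'' means. All four reductions hinge on a single algebraic identity. Differentiating the hypothesis $f(\Lambda x)=\Lambda f(x)$ gives $Df(\Lambda x)\,\Lambda=\Lambda\,Df(x)$, hence $Df(\Lambda x)=\Lambda\,Df(x)\,\Lambda^{-1}$, and combining this with the fact (from \cref{sec:compounds}) that additive compounds transform under conjugation via the multiplicative compound, $(SAS^{-1})^{[k]}=S^{(k)}A^{[k]}(S^{(k)})^{-1}$, yields
\begin{equation}
Df^{[k]}(\Lambda x)=\Lambda^{(k)}\,Df^{[k]}(x)\,(\Lambda^{(k)})^{-1},
\label{eq:symm-conj}
\end{equation}
and likewise with $k+1$ in place of $k$. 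I would also record that $\Lambda\in O(n)$ forces each $\Lambda^{(k)}\in O(n_k)$: from the Cauchy--Binet multiplicativity $(AB)^{(k)}=A^{(k)}B^{(k)}$ and $(A^\mathsf{T})^{(k)}=(A^{(k)})^\mathsf{T}$ one gets $(\Lambda^{(k)})^\mathsf{T}\Lambda^{(k)}=(\Lambda^\mathsf{T}\Lambda)^{(k)}=I_{n_k}$, so the transformations claimed in the statement are indeed orthogonal.

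Two consequences of \cref{eq:symm-conj} carry the rest. Multiplying on the right by $\Lambda^{(k)}$ gives the intertwining relation $Df^{[k]}(\Lambda x)\,\Lambda^{(k)}=\Lambda^{(k)}\,Df^{[k]}(x)$. Using orthogonality of $\Lambda^{(k)}$, the Rayleigh quotient is invariant, $(\Lambda^{(k)}z^k)^\mathsf{T}Df^{[k]}(\Lambda x)(\Lambda^{(k)}z^k)=(z^k)^\mathsf{T}Df^{[k]}(x)z^k$, so $\Phi_k(\Lambda x,\Lambda^{(k)}z^k)=\Phi_k(x,z^k)$, and $|\Lambda^{(k)}z^k|=|z^k|$ so the unit sphere $\mathbb{S}^{n_k-1}$ is preserved.

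The four parts are then short computations. For part 1, $\ell_k(\Lambda x,\Lambda^{(k)}z^k)=Df^{[k]}(\Lambda x)\Lambda^{(k)}z^k-\Phi_k(\Lambda x,\Lambda^{(k)}z^k)\,\Lambda^{(k)}z^k=\Lambda^{(k)}\big(Df^{[k]}(x)z^k-\Phi_k(x,z^k)z^k\big)=\Lambda^{(k)}\ell_k(x,z^k)$, which with $f(\Lambda x)=\Lambda f(x)$ gives equivariance of \cref{eq:projectedaugmented} under $\diag(\Lambda,\Lambda^{(k)})$. Part 2 applies the same identity blockwise to the $z^k$ and $z^{k+1}$ components. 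For part 3, $m^B_k(\Lambda x,\Lambda^{(k)}w^k)=Df^{[k]}(\Lambda x)\Lambda^{(k)}w^k-B\Lambda^{(k)}w^k=\Lambda^{(k)}m^B_k(x,w^k)$, using only the intertwining relation. For part 4, the scalar coefficient $\tfrac{B}{1-B}\tfrac{(y^{k+1})^\mathsf{T}Df^{[k+1]}(x)y^{k+1}}{|y^{k+1}|^2}$ appearing in $S^B_k$ is unchanged under $(x,y^{k+1})\mapsto(\Lambda x,\Lambda^{(k+1)}y^{k+1})$ by the Rayleigh-quotient and norm invariances (with $k+1$), and $Df^{[k]}(\Lambda x)\Lambda^{(k)}w=\Lambda^{(k)}Df^{[k]}(x)w$, so $S^B_k(\Lambda x,\Lambda^{(k+1)}y^{k+1},\Lambda^{(k)}w)=\Lambda^{(k)}S^B_k(x,y^{k+1},w)$; together with the trivially equivariant $x$-block and the $y^{k+1}$-block (equivariant by \cref{eq:symm-conj} with $k+1$), this gives equivariance of \cref{eq:dimaugmentedshifted} under $\diag(\Lambda,\Lambda^{(k+1)},\Lambda^{(k)})$.

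The only genuine content is \cref{eq:symm-conj} --- the compatibility of the additive compound with conjugation --- together with orthogonality of multiplicative compounds; these are exactly the properties of compound matrices I would cite from \cref{sec:compounds} (they follow, for instance, from $(e^{tA})^{(k)}=e^{tA^{[k]}}$ and Cauchy--Binet). Everything after that is routine, since each augmented vector field is built from $Df^{[k]}$ acting linearly plus scalar multiples of the state, and both pieces transform correctly. The main thing to watch is bookkeeping --- which compound attaches to which block in each system --- and the reminder that ``$f$ invariant under $\Lambda$'' abbreviates $\Lambda$-equivariance of the right-hand side, so the identity to establish is $F(TX)=TF(X)$, not $F(TX)=F(X)$.
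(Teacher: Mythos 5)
Your proposal is correct and follows essentially the same route as the paper's proof: establish $Df^{[k]}(\Lambda x)=\Lambda^{(k)}Df^{[k]}(x)(\Lambda^{(k)})^{\mathsf T}$ via the conjugation property of additive compounds together with orthogonality of $\Lambda^{(k)}$, then check equivariance of each augmented right-hand side. The only differences are cosmetic — you derive $Df(\Lambda x)=\Lambda Df(x)\Lambda^{\mathsf T}$ directly rather than citing it, and you write out all four parts where the paper details only the first.
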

\begin{proof}
By assumption $f(x)$ is equivariant under $\Lambda$, meaning $f(\Lambda x)=\Lambda f(x)$. Orthogonality of $\Lambda$ implies orthogonality of the multiplicative compound $\Lambda^{(k)}$ by \cref{thm:multcompoundprops}, so all transformations in the proposition are orthogonal. The claim is proved by showing that the right-hand sides of the augmented ODEs are equivariant under the claimed transformations. All four parts are very similar, so we prove only the first. For this we must show that $ \ell_k(\Lambda x, \Lambda^{(k)} z^k) = \Lambda^{(k)}\ell_k(x,z^k)$. It is sufficient to show that
\begin{equation}
    Df^{[k]}(\Lambda x) \Lambda^{(k)}z^k  =  \Lambda^{(k)}Df^{[k]}(x) z^k,
    \label{eq:proofeq}
\end{equation}
so that
\begin{equation}
     \Phi_k(\Lambda x,\Lambda^{(k)}z^k) = \Phi_k(x,z^k)
\end{equation}
by orthogonality of $\Lambda^{(k)}$. Since $Df (\Lambda x) = \Lambda Df(x)\Lambda^\mathsf{T}$, taking the $k^\mathrm{th}$ additive compound of this equation gives, via \cref{thm:addcompoundprops},
\begin{equation}
Df^{[k]}(\Lambda x) = \Lambda^{(k)} Df^{[k]}(x)\left(\Lambda^{(k)}\right)^\mathsf{T}.
\end{equation}
Orthogonality then gives \cref{eq:proofeq}. The proofs of the other parts follow similarly, noting that \cref{eq:proofeq} remains valid with $y^k$ or $w^k$ in place of $z^k$.
\end{proof}

Combining both propositions gives a finite symmetry group for the augmented systems. For example, if $\mathcal{B}$ is invariant and $f$ is equivariant under a group generated by $\{\Lambda_1, \dots, \Lambda_m\}$ then the right-hand sides of the augmented systems \cref{eq:projectedaugmented} and \cref{eq:shiftedaugmented} are equivariant under the group generated by
\begin{equation}
\label{eq:augmented symmetries}
    \left\{\diag(\Lambda_1, \Lambda_1^{(k)}),\dots,\diag(\Lambda_m, \Lambda_m^{(k)}),\diag(I_n, -I_{\bn{n}{k}})\right\}.
\end{equation}
Thus we can restrict to $V$ that are invariant under the same transformations, and likewise for any $\sigma_i$ and $\rho_i$ appearing in quadratic module representations. The examples of the next section illustrate such symmetry exploitation for particular ODEs.

\section{Computational examples}
\label{sec:examples}
We now present computational examples in which our bounding formulations that use SOS optimization are applied to particular ODEs. It has been conjectured that, for all systems with equilibria embedded in chaotic attractors, the supremum in the formula~\cref{eq:dL} for Lyapunov dimension is attained on an equilibrium \citep{eden1989abstract, leonov1993eden}. This indeed is the case for many simple systems, including Lorenz's 1963 model \citep{leonov2016lyapunov}. In order to demonstrate the power of our methods in general, we want examples in which LE sums and Lyapunov dimension are not maximized on equilibria. Therefore we choose one dissipative system and one Hamiltonian system that each have no equilibria.

For all computations reported below, Julia code to replicate the results is available at \href{http://github.com/jeremypparker/Parker_Goluskin_LEs}{\texttt{github.com/jeremypparker/Parker\char`_Goluskin\char`_LEs}}. We used SumOfSquares.jl \cite{sumofsquares} to reformulate SOS optimization problems as SDPs, along with SymbolicWedderburn.jl \cite{symbolicwedderburn} for symmetry exploitation. To solve the resulting SDPs, we primarily used Mosek version 11 \citep{mosek}. Certain SDPs were also solved using Clarabel.jl \citep{clarabel} and Hypatia.jl \citep{hypatia}, which gave very similar results with longer runtimes.

In SOS formulations from the sphere projection approach, the bounds $B$ can be minimized as the objective of the SDPs, and the bounds we report are the numerical optima returned by Mosek. In SOS formulations from the shifted spectrum approach, the value of $B$ must be fixed during each SDP solution because it multiplies coefficients of $V$ in the SOS constraints, and the coefficients to be optimized over can appear only linearly. In such computations, if there exists a $V$ satisfying the SOS constraints, the SDP solver reports `feasibile', and otherwise it reports `infeasible'. To minimize the $B$ at which bounds are verified in the shifted spectrum approach, we perform a bisection search in $B$, solving SDPs at various fixed $B$ to find the boundary between large $B$ where the SDPs are feasible and small $B$ where they are infeasible. As $B$ approaches this boundary, numerical conditioning of the SDPs worsens, making it hard to precisely identify the infimum over feasible $B$. Confirming that near-optimal bounds are valid calls for rigorous and/or multiple-precision computations \citep{goluskin2018bounding,parker2024lorenz}, but for simplicity we have used only double precision arithmetic in our examples. Guided by~\citep{parker2024lorenz}, we deem a bound feasible only if Mosek reports a dual objective value of less than $10^{-6}$.

In all cases, the maximum total degree of $V$ in the components of $x$ is restricted to some value $\nu_x$.
In the sphere projection formulations, we also specify the maximum total degree of $V$ in $z^k$ as some value $\nu_z$. Then the maximum overall degree of each term is $\max\{\nu_x,\nu_z\}$ when bounding $\Mu_k$ using $(x,z^k)$, or it is $\max\{\nu_x,2\nu_z\}$ when bounding $d_L$ using $(x,z^k,z^{k+1})$. In the shifted spectrum formulations, $V$ is quadratic in $w^k$ or $w$ when bounding $\Mu_k$ or $d_L$, respectively, and in the latter case the maximum total degree of $V$ in $y$ is $\nu_y$ and in $(x,y)$ is $\max\{\nu_x,\nu_y\}$. Spaces for polynomials other than $V$ are chosen based on the space for $V$. In particular, maximum degrees of $V$ dictate the maximum degrees of the expressions $S$ that are affine in $V$ and that are constrained to lie in a quadratic module~\cref{eq:quad module}. Maximum degrees of any polynomials $\sigma_i$ and/or $\rho_j$ appearing in the quadratic module representation are chosen such that this representation has the same maximum degrees as $S$. The exact polynomial spaces used in each of our computations for both examples can be seen in the accompanying code.

To confirm the sharpness of our numerical bounds, we also computed various periodic orbits and the LE sums along each of these orbits. We used the Tsit5 solver from the OrdinaryDiffEq.jl package \citep{DifferentialEquations.jl-2017} to integrate the ODE systems, find periodic orbits and compute LEs. For an orbit with period $T$, the Jacobian $D\phi^T$ of the one-period flow map was constructed by automatic differentiation using the ForwardDiff.jl package \citep{RevelsLubinPapamarkou2016}. Then, the real parts of the Jacobian's eigenvalues give the LEs $\mu_k$. From these $\mu_k$ we calculate the periodic orbit's LE sums $\Mu_k$ and Lyapunov dimension.

\subsection{The Duffing Oscillator}
\label{sec:duffing}
The Duffing oscillator is a second-order non-autonomous ODE governing the amplitude $A(t)$ of oscillations that are damped and sinusoidally forced. Different versions exist, but we choose
\begin{equation}
\label{eq:duffing}
\frac{{\rm d}^2A}{{\rm d}t^2} + \delta \frac{{\rm d}A}{{\rm d}t} + \beta A +\alpha A^3 = \gamma \cos{\omega t}
\end{equation}
with parameter values $(\alpha,\beta,\gamma,\delta,\omega)=(1,-1,0.3,0.2,1)$. Numerical integration gives a trajectory that apparently converges to a chaotic attractor. We numerically estimate the Lyapunov exponents on the chaotic attractor using ChaosTools.jl \citep{DynamicalSystems.jl-2018,DatserisParlitz2022}. This gives $\mu_1\approx0.158$ and $\mu_2\approx-0.358$, which imply a local Lyapunov dimension of roughly $1.441$. \citet{kuznetsov2020attractor} derived an analytical upper bound on the Lyapunov dimension $d_L$ of the global attractor, which gives  $d_L \leq 1.9910$ at our parameter values. (In their notation, the present values are $\delta=0.1$, $\varepsilon=0.3$ and $\omega=1$.) Note that we have not defined LEs or Lyapunov dimension for non-autonomous systems, although it is straightforward to do so. Instead, we can reformulate~\cref{eq:duffing} as a first-order system whose right-hand side is autonomous and polynomial, so that the SOS methods of \cref{sec:sos} are directly applicable.

To obtain a first-order system from the ODE~\cref{eq:duffing} we let $x_1 = A$ and $x_2 = \ddt{}{A}$. To obtain an autonomous polynomial system, we employ a standard trick (e.g., as in \citet{parker2021study}) and let $x_3=\sin{\omega t}$ and $x_4=\cos{\omega t}$. Then the $x(t)$ vector evolves according to
\begin{equation}
\label{eq:duffingsys}
\ddt{}{x} = f(x), \quad \text{where}\quad
f(x) =\begin{bmatrix}
        x_2\\-\delta x_2-\beta x_1 - \alpha x_1^3 + \gamma x_4\\
        \omega x_4\\
        -\omega x_3
    \end{bmatrix}.
\end{equation}
Since $x_3$ and $x_4$ are sine and cosine of the same argument, dynamics are restricted to the semialgebraic set
\begin{equation}
    \mathcal{B} = \{x\in\mathbb{R}^4~:~h(x)=0\}, 
    \quad \text{where}\quad h(x) = x_3^2+x_4^2-1.
\end{equation}
Only odd powers of $x$ appear in \cref{eq:duffingsys}, so the right-hand side is equivariant under the simple sign-change symmetry $\Lambda=-I$, and the domain $\mathcal{B}$ is $\Lambda$-invariant. This induces a corresponding symmetry in the augmented ODE system for each bounding formulation (cf.~\cref{thm:generalsymm}), along with the symmetry that is always present (cf.~\cref{thm:signsymm}). In our code, the symmetries of the relevant augmented system are imposed on $V$, and on any other polynomials appearing in Positivstellans\"atze, as explained in \cref{sec:syms}.

For the right-hand side of the ODE~\cref{eq:duffingsys}, the Jacobian matrix is 
\begin{equation}
    Df(x) = \begin{bmatrix}
        0 & 1 & 0 & 0\\
        -\beta -3\alpha x_1^2 & -\delta & 0 & \gamma\\
        0 & 0 & 0 &\omega \\
        0 & 0 & -\omega & 0
    \end{bmatrix}.
\end{equation}
The additive compound matrices of this Jacobian are
\begin{align}
\label{eq:duffing additivecompounds}
    Df^{[2]}(x) &= \begin{bmatrix}
        -\delta & 0 &\gamma & 0 & 0 & 0\\
        0&0&\omega&1&0&0 \\
        0&-\omega&0&0&1&0 \\
        0&-\beta-3\alpha x_1^2&0&-\delta&\omega&\gamma \\
        0&0&-\beta-3\alpha x_1^2&\omega&-\delta&0 \\
        0&0&0&0&0&0
    \end{bmatrix}, \\
    Df^{[3]}(x) &= \begin{bmatrix}
        -\delta & \omega & -\gamma & 0\\
        -\omega & -\delta & 0 & 0\\
        0 & 0 & 0 & 1 \\
        0 & 0 & -\beta-3\alpha x_1^2 & -\delta
    \end{bmatrix}, \\
  Df^{[4]}(x) &= -\delta,
\end{align}
and we recall that $Df^{[1]}=Df$ always. In this system, various relations for the LEs can be deduced \emph{a priori}. The fourth additive compound---i.e., the trace of the Jacobian---is pointwise constant here, which implies that $\Mu_4=-\delta$ on every trajectory. Furthermore, every trajectory has two zero LEs: one associated with constant rotation between the sine and cosine variables ($x_3$ and $x_4$), and another associated with translation along the flow, which applies to all trajectories since there are no equilibria. We expect all trajectories to have a single positive LE $\mu_1>0$, so that the four LEs are $(\mu_1,0,0,-(\delta+\mu_1))$. Note that the two zero LEs are created by the reformulation of the second-order ODE~\cref{eq:duffing} as the fourth-order system~\cref{eq:duffingsys}. Directly applying the definition \cref{eq:sumdef} using the time-dependent dynamics in \cref{eq:y flow map}, the original system would have only the two LEs $\mu_1$ and $-(\delta+\mu_1)$.

The LE relations described above for the system~\cref{eq:duffingsys} imply that, on each trajectory, the LE sums are related by $\Mu_1=\Mu_2=\Mu_3$. This further implies that the local Lyapunov dimension is $3+\Mu_1/(\delta+\Mu_1)$, which corresponds to $1+\Mu_1/(\delta+\Mu_1)$ for the original second-order equation~\cref{eq:duffing}. Therefore, to obtain sharp upper bounds on all maximum LE sums and on the global Lyapunov dimension, it suffices to find a sharp bound on the maximum leading LE $\Mu_1$. Nevertheless, in order to test our methods, we computed bounds using our formulations for $\Mu_1$, $\Mu_2$, $\Mu_3$ and $d_L$. We do not report computations for $\Mu_4$ since $V=0$ already gives the sharp bound $\Mu_4\le -\delta$, due to  $Df^{[4]}=-\delta$ holding pointwise in this system.

To formulate SOS optimization problems, we choose finite-dimensional spaces over which to optimize $V$ and the other polynomials appearing in Positivstellans\"atze. When using \cref{thm:projectionmethod sos} to bound $\Mu_k$ by the sphere projection approach, for instance, these other polynomials include $\rho$ that multiplies $h$ and $\sigma$ that multiplies $|z^k|^2-1.$ The spaces of these polynomials are restricted by imposing symmetries of the problem as described in \cref{sec:syms}. They are also restricted by imposing maximum polynomial degrees of $V$ as described in the fourth paragraph of \cref{sec:examples}.

\Cref{tab:duffing} reports bounds computed using the sphere projection approach. Bounds on the LE sums $\Mu_k$ and Lyapunov dimension $d_L$ are computed by solving the SOS optimization problems \cref{eq:C1 def,eq:C2 def}, respectively, for various polynomial degrees $\nu_x$ and $\nu_z$.
\begin{table}[t]
\centering
\caption{Sphere projection approach for the Duffing oscillator in its autonomous formulation~\cref{eq:duffingsys}. The leading LE sums $\Mu_k$ are bounded by solving the SOS problem in \cref{thm:projectionmethod sos}, excluding $\Mu_4$ since it is trivial in this system (see text). The Lyapunov dimension $d_L$ of the global attractor is bounded by solving the SOS problem in \cref{thm:dimcombined sos} with $k=3$; bounds for the non-autonomous formulation~\cref{eq:duffing} can be obtained by subtracting~2. The auxiliary function $V$ is restricted to various maximum degrees $(\nu_x,\nu_z)$ in $x$ and $z^k$, which induce corresponding degrees in other polynomials, as described in the text. The tabulated bounds are rounded to the precision shown, and the underlines indicate digits that agree (after rounding) with the numerically computed values $\Mu_1=\Mu_2=\Mu_3\approx0.887897$ and $d_L\approx3.816159$ attained by the periodic orbit shown in \cref{fig:duffing}.}
\label{tab:duffing}
\vspace{6pt}
\begin{tabular}{c c c c c c c}
\hline
\multirow{2}{*}{$\nu_x$}&\multirow{2}{*}{$\nu_z$} & \multicolumn{4}{c}{Upper bounds} \\ \cline{3-6}
&& $\Mu_1$ & $\Mu_2$ & $\Mu_3$ & $d_L$ \\ \hline 
2&0&27.36247& 31.82308&29.78229 & \underline{3.}99116 \\ 
&2&14.82952&15.58938&13.93659 & \underline{3.}97731 \\
&4&7.14087&7.86722&7.73049 &\underline{3.}96131 \\ \hline
4&0&\underline{1.}36659&\underline{1.}36676&\underline{1.}36662& \underline{3.8}7232\\
&2&\underline{0.88790}&\underline{0.8879}7&\underline{0.88790} & \underline{3.81616} \\
&4&\underline{0.88790}& \underline{0.88790}&\underline{0.88790} & \underline{3.81616} \\ \hline
6 & 0 &\underline{1.}18428&\underline{1.}18427&\underline{1.}18481&\underline{3.8}5544 \\
& 2 &\underline{0.88790}&\underline{0.8879}7&\underline{0.88790}& \underline{3.81616} \\ \hline
\end{tabular}
\end{table}
The number of variables in the SOS program~\cref{eq:C1 def} for $\Mu_k$ with $k=1,2,3$ are $4+\bn{4}{k}=8,10,8$, respectively, and the number of variables in the SOS program~\cref{eq:C2 def} for $d_L$ is $4+\bn{4}{3}+\bn{4}{4}=9$. All bounds converge quickly as polynomial degrees are raised, and our best bounds on each quantity are sharp to at least 4 digits, as we confirm below by finding a periodic orbit whose LE sums and Lyapunov dimension saturate the bounds to this precision. We will see below that the maximizing orbit is geometrically simple, which typically allows for faster convergence as polynomial degrees are raised, relative to examples with more complicated maximizing orbits \citep{goluskin2018bounding}.

We have also computed bounds using the shifted spectrum approach. In this case, the SOS optimization problems solved to bound $\Mu_k$ and $d_L$ are \cref{thm:shiftedmethod sos,thm:shifted dimension sos}, respectively. All expressions are quadratic in the $w$ variables, and we must choose only the total degree in $x$ for $V$ and for other optimized polynomials. Feasibility or infeasibility of the SOS constraints for the chosen polynomial spaces is determined by solving SDPs with various fixed $B$, and the minimum feasible $B$ is sought by a bisection search. With $\nu_x\le 4$, numerical errors are large, and we did not find clear feasibility at any $B$ value. With $\nu_x=6$, however, we obtain the bounds
\begin{equation}
\Mu_1\le \underline{0.887898} \quad \text{and} \quad d_L\le \underline{3.8161}7,
\label{eq:duffing shifted bounds}
\end{equation}
where underlines indicate digits whose sharpness is confirmed by the periodic orbit described below. The SOS problem \cref{eq:C3 def,eq:C4 def}, which are feasible when $B$ is fixed to the respective bound values in~\cref{eq:duffing shifted bounds} with $\nu_x=6$, do not meet our criterion for feasibility when the final digits of the bounds in~\cref{eq:duffing shifted bounds} are decreased by $1$.

\begin{figure}[t]
    \centering
    \includegraphics[width=0.7\linewidth]{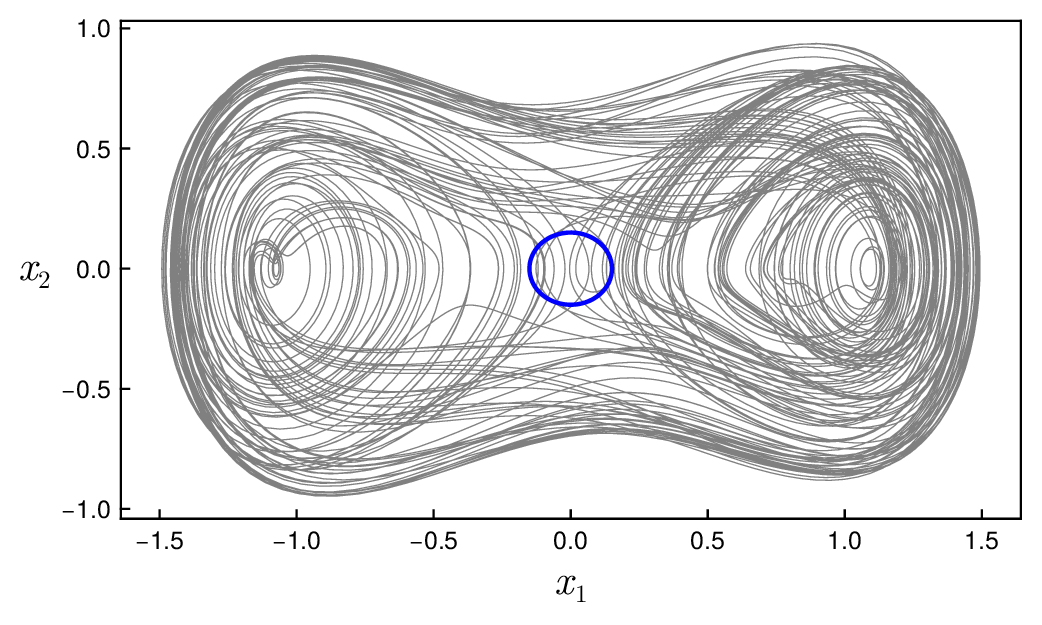}
    \caption{A trajectory on the chaotic attractor (thin line) and the unstable $2\pi$-periodic orbit (thick line) for the Duffing oscillator. The figure is the same for the non-autonomous system~\cref{eq:duffing} or its autonomous reformulation \cref{eq:duffingsys}.}
    \label{fig:duffing}
\end{figure}

To seek an orbit that maximizes the leading LE $\Mu_1$ among all trajectories, and consequently also maximizes the other $\Mu_k$ and the Lyapunov dimension, we follow ideas in~\citep{tobasco2018optimal,lakshmi2020finding} for using the $V$ found in SOS computations for bounding time averages. Applied here, their observations imply that that, for any $V(x,z^1)$ giving a nearly sharp bound $B$ on $\Mu_1$ via the SOS program~\cref{eq:C1 def}, the time average of
\begin{equation}
B-\Phi_1 - f\cdot D_xV-\ell_1\cdot D_{z^1}V
\label{eq:near-optimal set}
\end{equation}
is small on any orbit that nearly maximizes $\Mu_1$. This quantity is also constrained to be nonnegative, so it must be small pointwise on most parts of such orbits. Therefore we seek points in the $(x,z^1)$ state space where \cref{eq:near-optimal set} is small and then seek periodic orbits passing near these points.

To find points where~\cref{eq:near-optimal set} is small, we fix $(x_3,x_4)=(0,1)$ and consider a mesh of $(x_1,x_2)$ over roughly the extent of the chaotic attractor. At each $(x_1,x_2)$ pair, we approximate the minimum of~\cref{eq:near-optimal set} over $z^1\in\mathbb{S}^3$ by random sampling. For the $(x,z)$ at which we find our smallest value of~\cref{eq:near-optimal set}, this $x$ is used as the initial guess for a Newton-shooting algorithm, which converges to a $2\pi$-periodic orbit that approximately intersections the point
\begin{equation}
    (x_1,x_2,x_3,x_4) = (-0.14984, 0.01520, 0, 1).
\end{equation}
This is the small, approximately elliptical periodic orbit in the center of \cref{fig:duffing}. We computed LEs on this orbit to be approximately
\begin{equation}
    \mu_1=0.887897,\quad\mu_2=0,\quad\mu_3=0,\quad\mu_4= -1.08785.
\end{equation}
These LEs give
\begin{equation}
    \Mu_1=\Mu_2=\Mu_3=0.887897,\quad\Mu_4= -0.20000, \quad d_L=3.816159,
    \label{eq:duffing PO Mk}
\end{equation}
where $d_L$ is computed from the Kaplan--Yorke formula~\cref{eq:dL} with the assumption that $\Mu_1$ is indeed maximized on this periodic orbit. The values in~\cref{eq:duffing PO Mk} match at least 4 digits of our best upper bounds from both the sphere projection approach in \cref{tab:duffing} and the shifted spectrum approach in~\cref{eq:duffing shifted bounds}. This confirms sharpness of these bounds and maximality of the period orbit in \cref{fig:duffing} to at least 4 digits. In the limit $\gamma\to0$ of \cref{eq:duffing}, the $2\pi$-periodic orbit becomes an equilibrium at the origin. This is consistent with previous observations that Lyapunov dimension is maximized on equilibria in systems that have such points. 

For the Duffing oscillator in its original non-autonomous formulation~\cref{eq:duffing}, the sharp bound $d_L\le 1.8162$ on global Lyapunov dimension follows from the bound $d_L\le3.8162$ that we computed for the autonomous reformulation~\cref{eq:duffingsys}. This improves on the analytical bound from \cite{kuznetsov2020attractor}, which gives $d_L \leq 1.9910$ at the present parameters.

\subsection{A three-degree-of-freedom Hamiltonian system\label{sec:hamiltonian}}

For an example in which LE sums $\Mu_k$ for different $k$ are not all maximized on the same orbits, we consider a 6-dimensional ODE whose numerous symmetries keep the computational costs tractable. In particular, we consider a Hamiltonian system whose Hamiltonian function is
\begin{equation}
    \label{eq:triaxial}
    H(x) = \frac{1}{2}\left|x\right|^2 + x_1^2x_2^2+x_2^2x_3^2+x_3^2x_1^2,
\end{equation}
where the first three components of $x\in\mathbb{R}^6$ are position variables, and the other three components are corresponding momenta. This is a specific form of a triaxially symmetric potential studied by \citet{palacian2017periodic} that has applications in galactic dynamics \citep{de1985motion,caranicolas19941}. The Hamiltonian~\cref{eq:triaxial} gives rise to the ODE system
\begin{equation}
\ddt{}x = f(x), \quad \text{where} \quad
f(x)= \begin{bmatrix}
        x_4\\x_5\\x_6\\
        -x_1-2x_1x_2^2-2x_1x_3^2\\
        -x_2-2x_2x_1^2-2x_2x_3^2\\
        -x_3-2x_3x_1^2-2x_3x_2^2
    \end{bmatrix}.
    \label{eq:triaxialode}
\end{equation}
We consider dynamics only on the $H=1$ energy surface, where there are no equilibria. Our semialgebraic specification of this domain also includes the redundant constraint that $|x|^2\le 2$, so it is
\begin{equation}
\mathcal{B} = \bigg\{x\in\mathbb{R}^3\;:\;g(x)\ge0,~h(x)=0\bigg\},
\label{eq:hamiltonian domain}
\end{equation}
where 
\begin{equation}
g(x)=2-|x|^2,\quad 
h(x)=H(x)-1.
\end{equation}
Adding the inequality constraint does not change the set $\mathcal{B}$, but it changes the SOS Positivestelans\"atze that certify nonnegativity on $\mathcal{B}$, and we find empirically that this gives better bounds at lower computational cost in this example. The ODE~\cref{eq:triaxialode} and its domain~\cref{eq:hamiltonian domain} are both invariant under $x\mapsto\Lambda x$ for all $\Lambda$  in the 48-element octahedral symmetry group. Represented as a subgroup of $O(6)$, this group is generated by a sign-change $\Lambda_1$ and permutations $\Lambda_2$ and $\Lambda_3$, where
\begin{equation*}
    \Lambda_1 = \begin{bmatrix}
        -1 & 0 & 0 & 0 & 0 & 0\\
        0 & 1 & 0 & 0 & 0 & 0\\
        0 & 0 & 1 & 0 & 0 & 0\\
        0 & 0 & 0 & -1 & 0 & 0\\
       0& 0 & 0 & 0 & 1 & 0\\
        0& 0 & 0 & 0 & 0 & 1\\
    \end{bmatrix},\ 
    \Lambda_2 = \begin{bmatrix}
        0 & 1 & 0 & 0 & 0 & 0\\
        1 & 0 & 0 & 0 & 0 & 0\\
        0 & 0 & 1 & 0 & 0 & 0\\
        0 & 0 & 0 & 0 & 1 & 0\\
       0& 0 & 0 & 1 &0 & 0\\
        0& 0 & 0 & 0 & 0 & 1\\
    \end{bmatrix},\ 
    \Lambda_3 = \begin{bmatrix}
        0 & 0 &1 & 0 & 0 & 0\\
        0 & 1 & 0 & 0 & 0 & 0\\
        1 & 0 & 0 & 0 & 0 & 0\\
        0 & 0 & 0 & 0 & 0 & 1\\
       0& 0 & 0 & 0 & 1 & 0\\
        0& 0 & 0 & 1 & 0 & 0\\
    \end{bmatrix}.
\end{equation*}

Various relations between the LEs hold \emph{a priori} because this system is Hamiltonian and has no equilibria. Hamiltonian systems conserve state space volume, meaning that $\mathrm{tr}(Df)=0$ pointwise. This implies that  $\Mu_6=0$ on all trajectories, and that there are no attractors, so our methods for bounding Lyapunov dimension are not relevant. Hamiltonian structure also implies that LEs come in pairs of $\mu_k$ with opposite sign \citep{skokos2009lyapunov}, so $(\mu_1,\mu_2,\mu_3)=(-\mu_6,-\mu_5,-\mu_4)$. Since there are no equilibria, all trajectories have a zero LE corresponding to time translation, which in Hamiltonian systems guarantees at least two zero LEs, here meaning $\mu_3=\mu_4=0$.

Relations between the LE sums $\Mu_k$ follow from the $\mu_k$ relations, which imply
\begin{equation}
\Mu_1=\Mu_5=\mu_1, \quad
\Mu_2=\Mu_3=\Mu_4=\mu_1+\mu_2, \quad 
\Mu_6 = 0.
\end{equation}
Therefore, to obtain sharp upper bounds on the maxima of all LE sums, it suffices to compute sharp bounds on $\Mu_1$ and $\Mu_2$. Nevertheless, in order to test our methods, we computed bounds for all $\Mu_k$ aside from $\Mu_6$. The fact that $\Mu_6= 0$ on all trajectories is immediate since $Df^{[6]}=0$ pointwise, and simply taking $V=0$ in our bounding formulations would give $\Mu_6\le 0$.

We have computed a number of unstable periodic orbits lying in $\mathcal{B}$. The maxima of $\Mu_1$ and $\Mu_2$ among these orbits provide candidates for the maxima over all trajectories in $\mathcal{B}$, but this can only be confirmed by the upper bounds on $\Mu_k$ reported below. The periodic orbits were found by a brute-force search over random initial conditions, uniformly distributed over the unit box in $(x_1,x_2,x_4,x_5)$ on the $x_3=0$ Poincar\'e section, with $x_6$ chosen so that $H(x)=1$. These were converged by a Newton-like method to periodic orbits that close after one iteration of the Poincar\'e return map.  Among the periodic orbits we found, $\Mu_1$ is maximized on each orbit in a set of 6 symmetry-related orbits that we call $S_1$, while $\Mu_2$ is maximized on each in a set of 4 symmetry-related orbits that we call $S_2$. \Cref{fig:triaxial_POs} shows the $S_1$ and $S_2$ orbits plotted in position space.
\begin{figure}[t]
    \centering
    \includegraphics[width=0.49\linewidth]{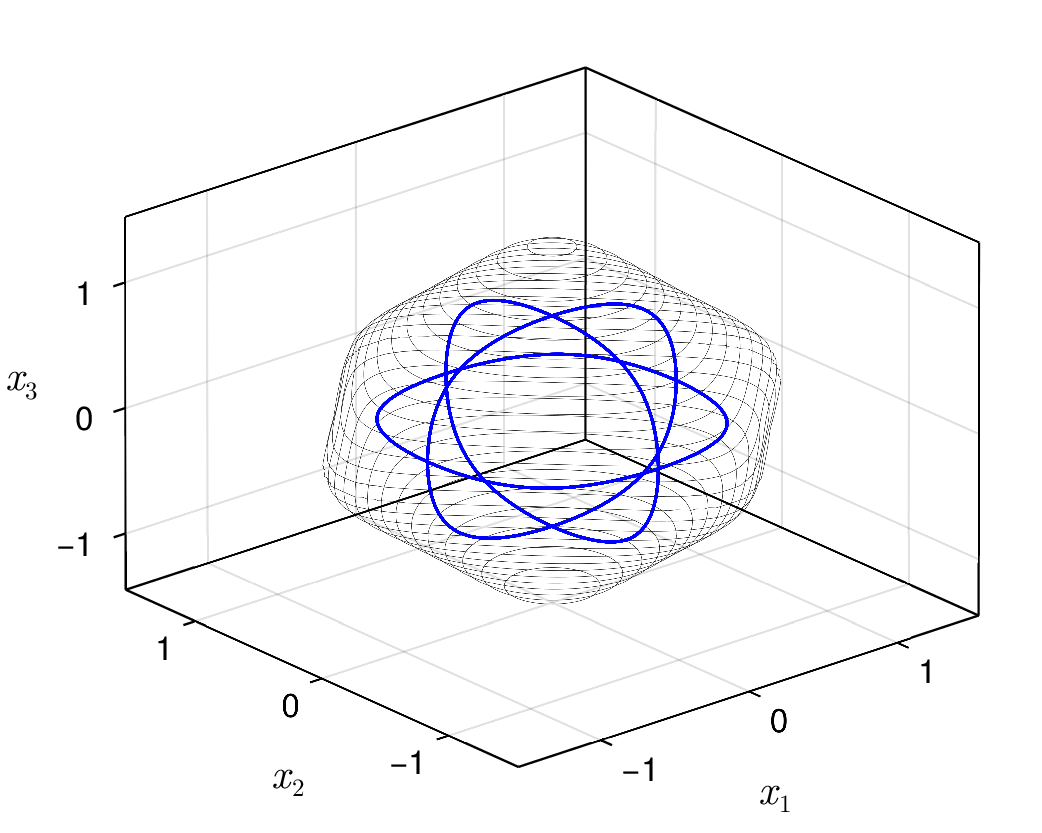}
    \includegraphics[width=0.49\linewidth]{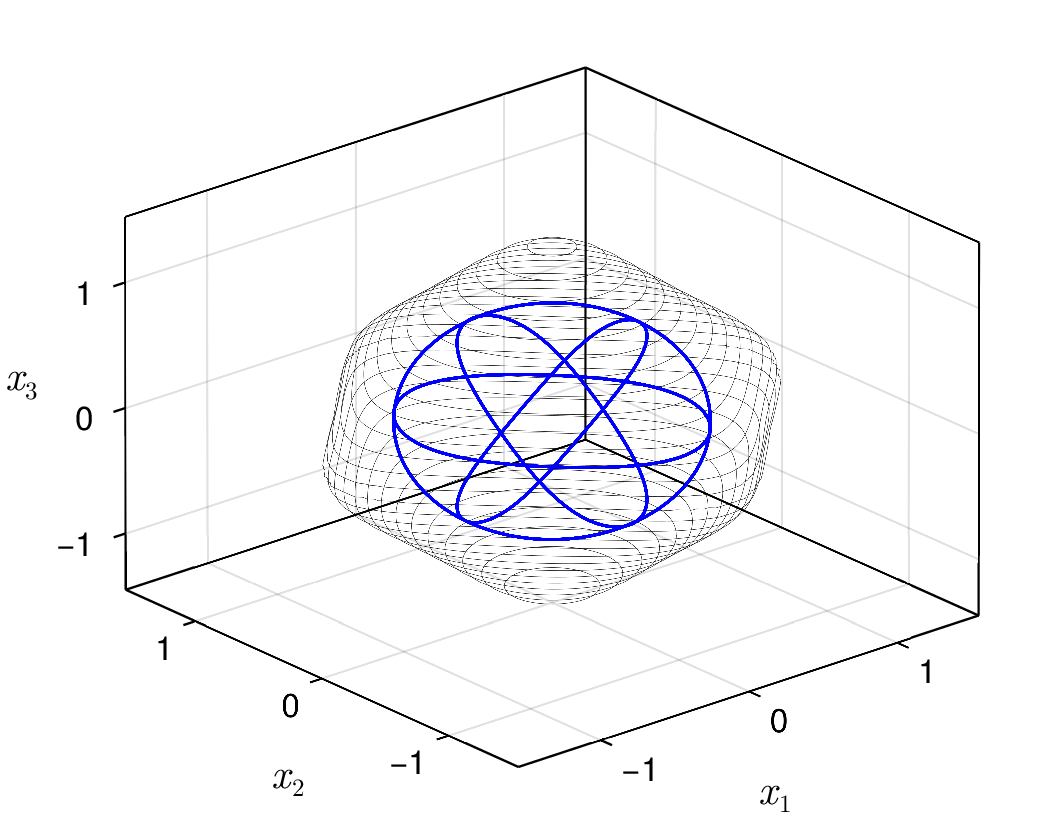}
    \caption{Position coordinates $(x_1,x_2,x_3)$ for the Hamiltonian system \cref{eq:triaxial} showing the $S_1$ (left) and $S_2$ (right) families of periodic orbits (thick lines). The $S_1$ orbits have LE sums $\Mu_1=\Mu_2=\Mu_3=\Mu_4=\Mu_5=0.42913$, and the $S_2$ orbits have $\Mu_1=\Mu_5=0.25919$ and $\Mu_2=\Mu_3=\Mu_4=0.51837$. The thin lines depict the part of the $H=1$ surface with zero momenta, which is the boundary of possible position coordinates.}
    \label{fig:triaxial_POs}
\end{figure}
The $S_1$ orbits have a period of approximately 5.278 and pass near one of the symmetry-related points $\Lambda x$ for
\[x=(0.83871,0,0,0,0,1.13867).\]
Their LEs are approximately
\[(\mu_1,\mu_2,\mu_3,\mu_4,\mu_5,\mu_6)=(0.42913,0,0,0,0,-0.42913),\]
corresponding to LE sums of 
\[(\Mu_1,\Mu_2,\Mu_3,\Mu_4,\Mu_5,\Mu_6)=(0.42913,0.42913,0.42913,0.42913,0.42913,0).\]
The $S_2$ orbits have a period of approximately 4.857 and pass near $\Lambda x$ for
\[ x=(-0.57566,0.57566,0,0.39004,0.39004,0.90186). \]
Their LEs are approximately\[(\mu_1,\mu_2,\mu_3,\mu_4,\mu_5,\mu_6)=(0.25919,0.25919,0,0,-0.25919,-0.25919),\] 
corresponding to sums of 
\[(\Mu_1,\Mu_2,\Mu_3,\Mu_4,\Mu_5,\Mu_6)=(0.25919,0.51837,0.51837,0.51837,0.25919,0).\] 
One may then guess that the maximum of $\Mu_1=\Mu_5$ among all trajectories in $\mathcal{B}$ is close to 0.42913, and that the maximum of $\Mu_2=\Mu_3=\Mu_4$ is close to 0.57566. These guesses are confirmed by the upper bounds that we report next.

As in the example of \cref{sec:duffing}, we formulate SOS optimization problems by choosing finite-dimensional spaces over which to optimize $V$ and the other polynomials appearing in Positivstellans\"atze. These spaces enforce symmetries of the problem (cf.~\cref{sec:syms}) and maximum polynomial degrees (cf.~the fourth paragraph of \cref{sec:examples}). We computed bounds on $\Mu_k$ using the SOS programs~\cref{eq:C1 def,eq:C3 def} from the sphere projection and shifted spectrum approaches, respectively. In both cases the total number of variables in the SOS program for $k=1,2,3,4,5$ are $6+\bn{6}{k}=12,21,26,21,12$, respectively. Due to high computational cost with this number of variables, the degree of $V$ in the $\bn{6}{k}$ tangent space variables was no larger than 2. The SOS constraints involve the additive compounds of the Jacobian matrix of $f$ in~\cref{eq:triaxialode}. For brevity we do not write down these compounds, as we did in the example of~\cref{sec:duffing}. They can be found in our included code, where they are constructed using the Julia package DynamicPolynomials.jl~\citep{benoit_legat_2024_11106280}.

\Cref{tab:triaxial} reports bounds on $\Mu_k$ computed using our two approaches. In both approaches, we vary the degree $\nu_x$ of $V$ in $x$, and in the sphere projection approach we also vary its degree $\nu_z$ in $z^k$.
\begin{table}[t]
\centering
\caption{Upper bounds on the LE sums $\Mu_k$ for the Hamiltonian system \cref{eq:triaxialode}, computed with the sphere projection and shifted spectrum approaches by solving the SOS problems in \cref{thm:projectionmethod sos,thm:shiftedmethod sos}, respectively. Polynomial degrees in $x$ are at most $\nu_x$. In the sphere projection approach, degrees in $z^k$ are at most $\nu_z$, and tabulated values are rounded up to the precision shown. In the shifted spectrum, expressions are quadratic in $w$, the tabulated values are the exact $B$ whose feasibility was verified, and feasibility could not be verified when the final digit of $B$ was decreased by 1. Underlines indicate digits that agree (after rounding) with the numerically computed values $\Mu_1=\Mu_5\approx0.42913$ on the $S_1$ periodic orbits or $\Mu_2=\Mu_3=\Mu_4\approx0.51837$ on the $S_2$ orbits (cf.~\cref{fig:triaxial_POs}). Bounds are missing in cases where solution of SDPs by interior-point algorithms required more than 3TB of memory.}
\label{tab:triaxial}
\vspace{6pt}
\begin{tabular}{c c c c c c c}
\hline
$\nu_x$ & $\nu_z$ & $\Mu_1$ & $\Mu_2$ & $\Mu_3$ & $\Mu_4$ & $\Mu_5$ \\\hline
&& \multicolumn{5}{c}{Upper bounds (sphere projection approach)} \\ 
\cline{3-7}
2 & 0 & 1.2361 & 2.0232 & 2.1082 & 2.0232 & 1.2361 \\
 & 2 & \underline{0.4}843 & \underline{0.}9216 & \underline{0.}9480 & \underline{0.}9216 & {0.4}843 \\ 
4 & 0 &  1.1855 & 2.0046 & 2.0611 & 2.0046 & 1.1855 \\
 & 2 & \underline{0.4292} & & & & \underline{0.4292} \\ \hline
&& \multicolumn{5}{c}{Upper bounds (shifted spectrum approach)} \\
\cline{3-7}
2 && \underline{0.43}11 & \underline{0.}6141  & \underline{0.}7579  & \underline{0.}6141 & \underline{0.43}11 \\
4 && \underline{0.4292} & \underline{0.5184} & \underline{0.5}711 & \underline{0.5184} & \underline{0.4292} \\
6 && \underline{0.4292} & \underline{0.5184} & \underline{0.5}300 & \underline{0.5184} & \underline{0.4292} \\         
\end{tabular}
\end{table}
Using the sphere projection, bounds on $\Mu_1$ and $\Mu_5$ are sharp to 4 digits with $V$ of maximum degrees $(\nu_x,\nu_z)=(4,2)$. For $k=2,3,4$, where the SOS problems have more variables, we did not obtain sharp bounds using sphere projection. With degrees $(\nu_x,\nu_z)=(4,2)$ at these $k$, we could not solve the necessary SDPs using Mosek because the 3TB of memory available to us was insufficient. This could be surmounted by using a first-order algorithm, rather than an interior-point algorithm, at the expense of very long runtimes and/or lower precision. Using the shifted spectrum approach, however, we obtain bounds that are sharp to 4 digits on all $\Mu_k$ aside from $k=3$. Bounds in the $k=3$ case are sharp to only 2 digits. This might be due to numerical inaccuracy in the solution of the SDP, which is larger than in any other case, or it might indicate a need for computations with $\nu_x=8$. Nonetheless, we remind the reader that we computed bounds with $k=3,4,5$ only to test our computational formulations. Due to the Hamiltonian structure of this example, sharp bounds on $\Mu_1$ and $\Mu_2$ immediately imply sharp bounds on the other $\Mu_k$ also. Therefore, we have indeed obtained bounds on all $\Mu_k$ that are sharp to 4 digits.

\section{Conclusion}
\label{sec:conclusion}

We have described two distinct approaches for computing upper bounds on sums of Lyapunov exponents and on Lyapunov dimension in ODE systems. One we call the sphere projection approach, which leads to a deterministic version of the Furstenberg--Khasminskii formula from stochastic dynamics. The other we call the shifted spectrum approach. In both approaches, our most general formulations require optimizing over auxiliary functions in the class of $C^1$ functions. We have proved that most of these formulations give sharp bounds under mild assumptions, but the optimization problems are not tractable in general. In the case of ODEs with polynomial right-hand sides, our bounding formulations can be relaxed using SOS constraints, yielding SOS optimization problems that are often computationally tractable, and which often converge to sharp bounds as polynomial degrees are raised. By carrying out such SOS computations for particular ODEs, we obtained nearly perfect bounds in challenging examples. For the Duffing oscillator example, the sphere projection computations had much better numerical conditioning. For the Hamiltonian example, the shifted spectrum computations were easier because the sphere projection computations had onerous memory requirements. In both examples, we confirmed sharpness of our best bounds by finding periodic orbits that saturate the bounds to at least 4 digits. We also demonstrated, in the Duffing oscillator example, how the results of our bounding computations can be used to find previously undetected orbits that saturate the bounds.

All of our computational methods could be used to construct fully rigorous bounds via computer-assisted proofs, making use of interval arithmetic or rational projection \citep{goluskin2018bounding, parker2024lorenz, parker2025computation}. In principle, computer-assisted proofs based on our framework could also give rigorous upper bounds on LEs in partial differential equations, by combining our methods with some in \citet{goluskin2019bounds}, but such computations would be challenging.

A clear future direction is to adapt our present methods to discrete-time dynamical systems. A formulation analogous to the sphere projection approach was formulated for bounding the single leading LE of discrete-time systems in the thesis of \citet{oeri2023thesis}, including computational formulations with SOS optimization in the case of polynomial maps. This can be directly extended to sums of LEs by using multiplicative compound matrices, as opposed to the additive compounds we have used for ODE dynamics. Formulations analogous to our shifted spectrum approach are also possible for discrete-time systems. For both approaches, the SOS computations arising for polynomial maps are generally harder than the SOS computations for polynomial ODEs that we have carried out here. The reason is that the auxiliary function $V$ is composed with the map in the discrete-time case, leading to high polynomial degrees. Explanation of this and other differences in the discrete-time case is left for a future publication.

\section*{Data availability statement}
Our code available at \href{http://github.com/jeremypparker/Parker_Goluskin_LEs}{\texttt{github.com/jeremypparker/Parker\char`_Goluskin\char`_LEs}} was used to carry out all computational examples.

\section*{Acknowledgements}
The authors thank Samuel Punshon-Smith and Anthony Quas for useful discussions, and James Meiss for suggesting possible Hamiltonian examples. During this work DG was supported by the NSERC Discovery Grants Program (awards RGPIN-2018-04263 and RGPIN-2025-06823). Computational resources were provided by the Digital Research Alliance of Canada.

\appendix
\crefalias{section}{appsec}
\crefalias{subsection}{appsec}
\crefalias{subsubsection}{appsec}

\section{Exterior products and compound matrices}
\label{sec:compounds}
The exterior algebra on $\mathbb{R}^n$, also called the Grassman algebra, is a graded algebra with a product that is denoted by $\wedge$ and called the exterior product. An exterior product of $k$ vectors in $\mathbb{R}^n$ is called a $k$-blade, and the space spanned by these is the space of $k$-vectors. The properties of the exterior product imply that permuting the $x_i$ in $x_1\wedge\dots\wedge x_k$ either leaves the $k$-blade unchanged or negates it, and that the $k$-blade is nonzero if and only if the $x_i$ are linearly independent. The set of $k$-vectors over $\mathbb{R}^n$ is therefore a linear space of dimension $\bn{n}{k}=\tfrac{n!}{k!(n-k)!}$. We identify this space with $\mathbb{R}^{\bn{n}{k}}$ by choosing a basis, which is the set of $k$-blades of the form $e_{i_1}\wedge\dots\wedge e_{i_k}$, where $e_i$ denotes the $i^\mathrm{th}$ standard basis vector in $\mathbb{R}^n$, and the indices $i_1<\ldots<i_k$ are all possible tuples of $k$ increasing integers between 1 and $n$. The standard Euclidean norm on this space satisfies the property that $|x_1\wedge\dots\wedge x_k|$ is the volume of the parallelepiped spanned by $x_1,\dots,x_k\in\mathbb{R}^n$ \citep{winitzki2009linear}.

Any linear map $A$ on vectors in $\mathbb{R}^n$ induces a linear map on corresponding $k$-vectors in $\mathbb{R}^{\bn{n}{k}}$, called the multiplicative compound $A^{(k)}$. Likewise, any linear ODE on $\mathbb{R}^n$ with right-hand side $A$ induces a linear ODE on $k$-vectors whose right-hand side is called the additive compound $A^{[k]}$. The multiplicative compound arises in many areas of mathematics and is known variously as the \textit{compound representation}, \textit{exterior power representation}, \textit{wedge power representation} or simply the \textit{induced action} of a linear map. Compound matrices have been applied previously to dynamical systems in the calculation and bounding of LEs \citep{manika2013application,martini2022ruling,martini2023bounding,kuznetsov2020attractor} and in stability analysis \citep{muldowney1990compound}. Here we state key definitions and theorems; a more detailed introduction can be found in \citet[Chapter 19F]{marshall2010inequalities}, in more generality for complex non-square matrices.

\begin{defn}
\label{def:comp}
For any real matrix $A\in \mathbb{R}^{n\times n}$ and integer $k\in\{1,\ldots,n\}$:
\begin{enumerate}
    \item The $k^\mathrm{th}$ \textit{multiplicative compound} of $A$ is the matrix $A^{(k)}\in\mathbb{R}^{{\bn{n}{k}} \times \bn{n}{k}}$ whose entries are the $k\times k$ subdeterminants of $A$, taken in lexicographic order. 
    \item The $k^\mathrm{th}$ \textit{additive compound} of $A$ is the matrix $A^{[k]}\in\mathbb{R}^{{\bn{n}{k}} \times \bn{n}{k}}$ such that
    \begin{equation}
        A^{[k]} = \frac{\mathrm{d}}{\mathrm{d}\delta}\left.\left(I+\delta A\right)^{(k)}\right|_{\delta=0}.
    \end{equation}
\end{enumerate}
\end{defn}
The lexicographic order in \cref{def:comp} refers to the ordering of all $\bn{n}{k}$ permutations of $k$ increasing integers chosen from $\{1,\ldots,n\}$. In this ordering, tuples are ordered by their first element, those with the same first element are ordered by their second element, and so on. As an example, the lexicographic order of $k=3$ integers in the case $n=4$ is
\begin{equation}
(1,2,3),\quad (1,2,4),\quad (1,3,4), \quad (2,3,4).
\end{equation}
Therefore, the $(2,3)$ entry of $A^{(3)}$ is
\begin{equation}
A^{(3)}_{2,3}=\det \begin{bmatrix} a_{1,1} & a_{1,3} & a_{1,4}\\ a_{2,1} & a_{2,3} & a_{2,4} \\ a_{4,1} & a_{4,3} & a_{4,4}
\end{bmatrix},
\end{equation}
Note that the row and column indices come from the second and third permutations in the lexicographic ordering, respectively. It follows from the definitions that $A^{(1)}=A^{[1]}=A$, while $A^{(n)}=\det{A}$ and $A^{[n]}=\mathrm{tr} A$. The eigenvalues of the $k^\mathrm{th}$ multiplicative and additive compounds are the products and sums, respectively, of the $k$ eigenvalues of $A$, which is why the compounds are called `multiplicative' and `additive'. Additive compounds for all $k$ in an explicit example with $n=4$ are given in \cref{eq:duffing additivecompounds}.

The next two propositions give properties of multiplicative and additive compounds, respectively, that will be useful to us. In \cref{thm:multcompoundprops}, part 1 is a special case of the Cauchy--Binet theorem, parts 2--4 follow quickly from the definition, and part 5 is theorem 6.2.10 of \citet{fiedler2008special}. For $A^{(k)}$ and $A^{[k]}$ to be linear mappings on $k$-blades, as in part 5 of \cref{thm:multcompoundprops} and part 2 of \cref{thm:addcompoundprops}, we identify the space of $k$-vectors with $\mathbb{R}^{\bn{n}{k}}$.
\begin{prop}[Properties of multiplicative compound matrices]
\label{thm:multcompoundprops}
For any $A,B\in\mathbb{R}^{n\times n}$:
\begin{enumerate}
\item $(AB)^{(k)}=A^{(k)}B^{(k)}$.
\item $\left(A^{(k)}\right)^\mathsf{T} = \left(A^\mathsf{T}\right)^{(k)}$.
\item    If $A$ is invertible, then $\left(A^{-1}\right)^{(k)}=\left(A^{(k)}\right)^{-1}$.
\item $A$ is orthogonal if and only if $A^{(k)}$ is orthogonal.
\item For any $k$-blade $x_1\wedge\dots\wedge x_k$,
\begin{equation}
    A^{(k)} \left(x_1\wedge\dots\wedge x_k\right) = (Ax_1)\wedge\dots\wedge (A x_k).
    \end{equation}
\end{enumerate}
\end{prop}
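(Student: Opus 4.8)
The plan is to establish the five parts in the stated order, since each later part either follows from part~1 (Cauchy--Binet) or from the coordinate description of the compound in \cref{def:comp}. First I would fix notation: index the rows and columns of any $k$th multiplicative compound by strictly increasing $k$-tuples $I,J\subseteq\{1,\ldots,n\}$ in lexicographic order, so that by definition $(A^{(k)})_{I,J}=\det A[I,J]$, the minor of $A$ on rows $I$ and columns $J$. With this in place, part~1 is immediate: the $(I,J)$ entry of $A^{(k)}B^{(k)}$ is $\sum_K \det A[I,K]\,\det B[K,J]$, summed over increasing $k$-tuples $K$, and the Cauchy--Binet formula identifies this with $\det (AB)[I,J]$.

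Next I would dispatch parts~2 and~3 by elementary manipulations. For part~2, the $(I,J)$ entry of $(A^{(k)})^\mathsf{T}$ is $\det A[J,I]$, which equals $\det A^\mathsf{T}[I,J]$ since transposing a square submatrix does not change its determinant. For part~3 I would first observe $I_n^{(k)}=I_{n_k}$ (a minor of the identity on rows $I$, columns $J$ is $1$ if $I=J$ and $0$ otherwise, the latter because the submatrix then has an all-zero row), and then apply part~1 to $AA^{-1}=I_n$ to get $A^{(k)}(A^{-1})^{(k)}=I_{n_k}$. Part~4 then splits into two directions: the forward implication, which is the only one needed elsewhere in the paper, follows by combining parts~1 and~2 with $A^\mathsf{T}A=I_n$ to get $(A^{(k)})^\mathsf{T}A^{(k)}=(A^\mathsf{T}A)^{(k)}=I_{n_k}$; for the converse one can pass to singular values, using that the singular values of $A^{(k)}$ are the $k$-fold products of those of $A$, so that all such products being $1$ forces all singular values of $A$ to coincide and hence equal $1$, making $A$ orthogonal.

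For part~5 the plan is to verify the identity on decomposable $k$-vectors built from standard basis vectors and then invoke multilinearity. Since both sides are alternating and multilinear in $(x_1,\ldots,x_k)$ and $A^{(k)}$ is linear, it suffices to compute $A^{(k)}(e_{j_1}\wedge\cdots\wedge e_{j_k})$ for an increasing tuple $J=(j_1,\ldots,j_k)$. By the coordinate description this is $\sum_I (\det A[I,J])\,(e_{i_1}\wedge\cdots\wedge e_{i_k})$, while $(Ae_{j_1})\wedge\cdots\wedge(Ae_{j_k})$ is the wedge of the columns of $A$ indexed by $J$, whose expansion in the chosen basis has $(e_{i_1}\wedge\cdots\wedge e_{i_k})$-coefficient equal to the determinant of the submatrix on rows $I$ of those columns, namely $\det A[I,J]$; the two expansions match. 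Alternatively one may simply cite Theorem~6.2.10 of \cite{fiedler2008special}.

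I expect the routine but error-prone step to be part~5: one must line up the sign and ordering conventions of the wedge basis fixed in \cref{sec:compounds} with the lexicographic indexing of minors used in \cref{def:comp}, and handle the cases where the chosen basis vectors are repeated or out of order (where both sides are, respectively, zero or related by a sign). The converse half of part~4 is the only place requiring a genuinely new ingredient (the singular value decomposition), but since only the forward implication is used in the paper this can be kept brief; everything else is a short consequence of Cauchy--Binet and the definition of $A^{(k)}$.
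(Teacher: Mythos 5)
Your proposal is correct and takes essentially the same route as the paper, whose ``proof'' is just the surrounding text: part~1 is cited as a special case of Cauchy--Binet, parts~2--4 are said to follow quickly from the definition, and part~5 is attributed to theorem~6.2.10 of Fiedler; your write-up simply fills in those details. One small caveat: your singular-value argument for the converse of part~4 needs $k<n$ (it requires two increasing $k$-tuples differing in a single index), and indeed for $k=n$ the converse as stated fails, since $A^{(n)}=\det A$ being orthogonal only means $|\det A|=1$ --- but as you note, only the forward implication is used elsewhere in the paper.
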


\begin{prop}[Properties of additive compound matrices]
\label{thm:addcompoundprops}
For any $A,B\in\mathbb{R}^{n\times n}$:
\begin{enumerate}
\item  $\left(A+B\right)^{[k]} = A^{[k]} + B^{[k]}$.
\item For any $k$-blade $x_1\wedge\dots\wedge x_k$,
\begin{equation}
A^{[k]} \left(x_1\wedge\dots\wedge x_k\right) = (A x_1\wedge x_2\wedge\dots\wedge x_k) + (x_1\wedge A x_2\wedge\dots\wedge x_k) + \dots + (x_1\wedge x_2\wedge\dots\wedge A x_k).
\end{equation}
\item If $B$ is invertible, $\left(B A B^{-1}\right)^{[k]} = B^{(k)}A^{[k]}\left(B^{(k)}\right)^{-1}$. 
\end{enumerate}
\end{prop}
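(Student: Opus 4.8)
The plan is to derive all three identities directly from the definition of the additive compound as the derivative $A^{[k]}=\frac{\mathrm{d}}{\mathrm{d}\delta}(I+\delta A)^{(k)}|_{\delta=0}$ in \cref{def:comp}, leaning on the multiplicative-compound properties already recorded in \cref{thm:multcompoundprops}. The single fact I would isolate at the outset is that, for any smooth matrix-valued curve $M(\delta)$ with $M(0)=I$, the entries of $M(\delta)^{(k)}$ are polynomials in the entries of $M(\delta)$, so $\frac{\mathrm{d}}{\mathrm{d}\delta}M(\delta)^{(k)}|_{\delta=0}$ depends only on the linear term $M'(0)$; writing this out shows it equals $(M'(0))^{[k]}$. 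This reduces each claim to a short manipulation of the curve sitting inside the derivative.

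For property 2 (the action on a decomposable $k$-vector), I would apply part~5 of \cref{thm:multcompoundprops} to the curve $M(\delta)=I+\delta A$, giving
\begin{equation}
(I+\delta A)^{(k)}(x_1\wedge\dots\wedge x_k)=(x_1+\delta Ax_1)\wedge\dots\wedge(x_k+\delta Ax_k).
\end{equation}
Expanding the right-hand side by multilinearity of the wedge product yields a polynomial in $\delta$ whose constant term is $x_1\wedge\dots\wedge x_k$ and whose coefficient of $\delta^1$ is obtained by replacing exactly one factor $x_i$ by $Ax_i$ and summing over $i$. Differentiating at $\delta=0$ kills the constant term and all $O(\delta^2)$ terms and returns precisely $\sum_{i=1}^k x_1\wedge\dots\wedge Ax_i\wedge\dots\wedge x_k$, which is the claim.

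Property 1 then follows almost immediately: the right-hand side of property 2 is manifestly linear in $A$ for each fixed decomposable $k$-vector, so $A\mapsto A^{[k]}$ agrees with a linear map on every $x_1\wedge\dots\wedge x_k$; since such decomposable $k$-vectors include the basis elements $e_{i_1}\wedge\dots\wedge e_{i_k}$ and hence span $\mathbb{R}^{n_k}$, the map $A\mapsto A^{[k]}$ is linear and $(A+B)^{[k]}=A^{[k]}+B^{[k]}$. (Alternatively, one can bypass property 2 by applying Cauchy--Binet, part~1 of \cref{thm:multcompoundprops}, to the product $(I+\delta A)(I+\delta B)=I+\delta(A+B)+\delta^2AB$ and using the product rule at $\delta=0$, where both factors reduce to $I^{(k)}=I$.) For property 3 I would write $I+\delta QAQ^{-1}=Q(I+\delta A)Q^{-1}$, apply Cauchy--Binet together with part~3 of \cref{thm:multcompoundprops} to obtain
\begin{equation}
(I+\delta QAQ^{-1})^{(k)}=Q^{(k)}(I+\delta A)^{(k)}(Q^{(k)})^{-1},
\end{equation}
and then differentiate at $\delta=0$; since $Q^{(k)}$ is constant in $\delta$ it passes outside the derivative, leaving $Q^{(k)}A^{[k]}(Q^{(k)})^{-1}$.

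The only step needing genuine care is the bookkeeping in property 2: one must justify that $M(\delta)^{(k)}$ is differentiable and that the derivative at $\delta=0$ selects exactly the $\delta^1$ coefficient of the multilinear expansion, with all higher-order terms discarded and the sign structure of the wedge product correctly tracked. Everything else is a direct consequence of the already-proven multiplicative-compound identities, so I do not anticipate any substantive obstacle beyond this routine verification.
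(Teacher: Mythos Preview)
Your proposal is correct. For part~3 your argument is essentially identical to the paper's: both write $I+\delta QAQ^{-1}=Q(I+\delta A)Q^{-1}$, apply Cauchy--Binet and the inverse property of multiplicative compounds, and differentiate at $\delta=0$. The differences lie in parts~1 and~2. For part~2 the paper simply cites an external reference (theorem~2f of \cite{london1976derivations}), whereas you supply a self-contained proof by applying part~5 of \cref{thm:multcompoundprops} to $I+\delta A$ and reading off the $\delta^1$ coefficient of the multilinear expansion; this is more informative within the paper's framework. For part~1 the paper says it is ``immediate from the definition'' (each entry of $(I+\delta A)^{(k)}$ is a polynomial in $\delta$ whose linear coefficient is itself linear in the entries of $A$), while you instead deduce linearity from the formula in part~2, reversing the logical order. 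Both routes are valid; yours ties the three parts together more tightly, at the cost of making part~1 depend on part~2 rather than standing alone.
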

\begin{proof}
Part 1 is immediate from the definition. For part 2, see theorem 2f of \citep{london1976derivations}. Part 3 follows by calculation,
    \begin{align*}
        \left(B A B^{-1}\right)^{[k]} &= \frac{\mathrm{d}}{\mathrm{d}\delta}\left.\left(I+\delta BAB^{-1}\right)^{(k)}\right|_{\delta=0}\\
        &= \frac{\mathrm{d}}{\mathrm{d}\delta}\left.\left[B\left(I+\delta A\right)B^{-1}\right]^{(k)}\right|_{\delta=0}\\
        &= B^{(k)} \frac{\mathrm{d}}{\mathrm{d}\delta}\left.\left(I+\delta A\right)^{(k)}\right|_{\delta=0}\big(B^{-1}\big)^{(k)} \\
        &= B^{(k)}A^{[k]}\big(B^{(k)}\big)^{-1},
    \end{align*}
    where the third equality uses property 1 from  \cref{thm:multcompoundprops}, and the fourth equality uses property 3.
\end{proof}
Multiplicative compound matrices are straightforward to compute directly, at least when $n$ is small. Additive compound matrices can be found with symbolic computation.

\section{A specialized Lyapunov theorem}
\label{sec:appendix2}
Here we present a Lyapunov function lemma on stability of $w(t)\in\mathbb{R}^m$ whose ODE is linear in $w$ but depends generally on $x(t)$ whose evolution can be nonlinear. This lemma is used to prove \cref{thm:shiftedmethod,thm:shifted dimension}. The second part, whose proof follows similar proofs in \citet{khalil2002nonlinear}, is a so-called converse Lyapunov theorem that guarantees existence of a Lyapunov function under a sufficiently strong stability assumption.
\begin{lem}
\label{thm:lyapunovfunction}
Let $x(t)$ and $w(t)$ solve the ODE system
\begin{equation}
\label{eq:generic augmented}
    \ddt{}\begin{bmatrix}
        x\\w
    \end{bmatrix} = \begin{bmatrix}
        f(x)\\ A(x)w
    \end{bmatrix}
\end{equation}
on $\mathcal B\times\mathbb{R}^m$ with initial condition $(x_0,w_0)$, where $f\in C^0(\mathcal{B},\mathbb{R}^n)$ and $A\in C^0(\mathcal{B}, \mathbb{R}^{m\times m})$. Assume $\mathcal{B}$ is forward invariant for the $x$ dynamics.
\begin{enumerate}
    \item Supopose each $x(t)$ is bounded for $t\ge0$. If there exists $V\in C^1\left(\mathcal{B}\times\mathbb{R}^m\right)$ such that
\begin{equation}
\label{eq:Lyapunov function conditions}
    V(x,w)\geq |w|^2\quad\text{ and }\quad f(x)\cdot D_xV(x,w) + \left[ A(x) w\right] \cdot D_wV(x,w)\leq 0
\end{equation}
for all $(x,w)\in\mathcal{B}\times\mathbb{R}^m$, then each $w(t)$ is bounded for $t\ge0$.
\item Suppose $\mathcal{B}$ is compact, $f\in C^1$ and $A\in C^1$. If there exist $K>0$ and $\lambda>0$ (uniform in $x_0$) such that
\begin{equation}
\label{eq:exp decay}
    |w(t)| \leq K e^{-\lambda t} |w_0|
\end{equation}
for all solutions to \cref{eq:generic augmented}, then there exists $U\in C^1\left(\mathcal{B},\mathbb{R}^{m\times m}\right)$ such that~\cref{eq:Lyapunov function conditions} is satisfied with $V(x,w) = w^{\mathsf{T}} U(x) w$.
\end{enumerate}
\end{lem}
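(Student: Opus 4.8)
The plan is to handle the two parts separately: the first is an immediate Lyapunov argument, while the second is a converse construction of Khalil type built from a \emph{finite} integration horizon.

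\textbf{Part 1.} Suppose $V\in C^1(\mathcal B\times\mathbb R^m)$ satisfies \cref{eq:Lyapunov function conditions}. Along any solution $(x(t),w(t))$ of \cref{eq:generic augmented}, the chain rule gives $\tfrac{\mathrm d}{\mathrm dt}V(x(t),w(t)) = f(x(t))\cdot D_xV + [A(x(t))w(t)]\cdot D_wV\le 0$, by the second inequality evaluated at $(x,w)=(x(t),w(t))$. Hence $t\mapsto V(x(t),w(t))$ is nonincreasing, so by the first inequality $|w(t)|^2\le V(x(t),w(t))\le V(x(0),w(0))$ on the interval of existence. Thus $w$ cannot blow up in finite time, and (the $x$-trajectory being defined for all $t\ge0$) the solution is global with $w(t)$ bounded.

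\textbf{Part 2.} Following \citet{khalil2002nonlinear}, for $x\in\mathcal B$ let $\Theta(\tau;x)$ be the state-transition matrix of the linear $w$-equation along the trajectory from $x$, i.e.\ $\tfrac{\mathrm d}{\mathrm d\tau}\Theta = A(x(\tau))\Theta$ with $\Theta(0;x)=I_m$, where $x(\tau)$ solves $\dot x=f(x)$ from $x$; then the solution of \cref{eq:generic augmented} from $(x,w)$ has $w(\tau)=\Theta(\tau;x)w$. With a horizon $T>0$ and a constant $c>0$ to be specified below, set
\begin{equation}
U(x)=c\int_0^T\Theta(\tau;x)^{\mathsf T}\Theta(\tau;x)\,\mathrm d\tau,\qquad V(x,w)=w^{\mathsf T}U(x)w=c\int_0^T|w(\tau)|^2\,\mathrm d\tau .
\end{equation}
I will verify the three requirements. \emph{(i) Lower bound.} Since $A$ is bounded, $a:=\sup_{x\in\mathcal B}\|A(x)\|<\infty$, and $\tfrac{\mathrm d}{\mathrm d\tau}|w(\tau)|^2=2w(\tau)^{\mathsf T}A(x(\tau))w(\tau)\ge-2a|w(\tau)|^2$, so Gr\"onwall gives $|w(\tau)|^2\ge e^{-2a\tau}|w|^2$; choosing $c$ large enough (depending only on $a$ and $T$) then yields $V(x,w)\ge|w|^2$. \emph{(ii) Decrease along trajectories.} By autonomy of \cref{eq:generic augmented}, $V(x(t),w(t))=c\int_t^{t+T}|w(s)|^2\,\mathrm ds$, hence $\tfrac{\mathrm d}{\mathrm dt}V(x(t),w(t))=c\big(|w(t+T)|^2-|w(t)|^2\big)$; applying \cref{eq:exp decay} to the solution restarted at time $t$ gives $|w(t+T)|\le Ke^{-\lambda T}|w(t)|$, so $\tfrac{\mathrm d}{\mathrm dt}V(x(t),w(t))\le c(K^2e^{-2\lambda T}-1)|w(t)|^2\le0$ once $T$ is chosen large enough that $K^2e^{-2\lambda T}\le1$ (note $K\ge1$ necessarily). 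Evaluating at $t=0$, and noting every $(x,w)\in\mathcal B\times\mathbb R^m$ is the initial datum of some solution, gives exactly the second inequality in \cref{eq:Lyapunov function conditions}. \emph{(iii) Regularity.} Since $f\in C^1$ and $A\in C^1$, the augmented system governing $(x(\cdot),\Theta(\cdot))$ has a $C^1$ right-hand side, so $(\tau,x)\mapsto\Theta(\tau;x)$ is $C^1$ by smooth dependence on initial conditions; as $[0,T]$ is compact, differentiation under the integral is justified and $U\in C^1(\mathcal B,\mathbb R^{m\times m})$ (with $U$ automatically symmetric).

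The chain-rule identities and the Gr\"onwall bound are routine; the step needing care is that all three requirements must be met by a single $U$, which is precisely why the construction uses a \emph{finite} horizon $T$. A finite $T$ makes the $C^1$ regularity of $U$ immediate, avoiding delicate decay estimates on $\partial_x\Theta$ over $[0,\infty)$; the exponential-decay hypothesis \cref{eq:exp decay} is exactly what allows $T$ to be calibrated so that the boundary term $|w(t+T)|^2-|w(t)|^2$ is nonpositive; and the boundedness of $A$ is what secures the lower bound $V\ge|w|^2$. (If $A\equiv0$ one may simply take $U\equiv I_m$.)
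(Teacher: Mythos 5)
Your proposal is correct and follows essentially the same route as the paper's proof: part 1 is the same monotonicity-of-$V$ argument (stated directly rather than by contradiction), and part 2 uses the identical converse-Lyapunov construction $V(x,w)=c\int_0^T|w(\tau)|^2\,\mathrm d\tau$ with the Gr\"onwall lower bound from boundedness of $A$, the choice $T\ge\frac{1}{\lambda}\log K$ to make the boundary term nonpositive, and quadratic dependence on $w$ giving $V=w^{\mathsf T}U(x)w$. Your explicit transition-matrix formulation and smooth-dependence argument for $U\in C^1$ just spells out what the paper asserts in one line.
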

\begin{proof}
For the first part, let $V\in C^1$ satisfy~\cref{eq:Lyapunov function conditions}. For contradiction, suppose that there is a solution to~\cref{eq:generic augmented} with $|w(t)|$ unbounded. The first condition on $V$ then implies that $V(x(t),w(t))$ is unbounded forward in time. The second condition on $V$ implies that
\begin{equation}
\ddt{}V(x(t),w(t)) = f(x(t))\cdot D_xV(x(t),w(t)) +\left[ A(x(t)) w(t) \right] \cdot D_wV(x(t),w(t)) \leq 0.
\end{equation}
This expression is continuous in $t$, so we can integrate to show that $V$ remains bounded for all time, which is a contradiction.

For the second part, fix a time $T > 0$, to be chosen later. Define
\begin{equation}
\label{eq:V lyap pf}
V(x_0, w_0) = \frac{2L}{1-e^{-2LT}}\int_0^T |w(t)|^2 \, {\rm d}t,
\end{equation}
where \( (x(t), w(t)) \) evolves under \eqref{eq:generic augmented} from initial condition $(x_0, w_0) $, and where $L$ is any value large enough that 
\begin{equation}
    \max_{x\in\mathcal{B}}\Vert A(x)\Vert \le L,
\end{equation}
where $\Vert\cdot\Vert$ denotes the $\ell^2$ operator norm, which is equal to the largest singular value of $A(x)$.  This condition on $L$ is needed below to ensure $V(x_0,w_0)\ge |w_0|^2$. By linearity of $w(t)$ with respect to $w_0$, the expression~\cref{eq:V lyap pf} for $V$ is quadratic in $w_0$ and thus can be written
\begin{equation}
    V(x_0,w_0) = w_0^\mathsf{T} U(x_0) w_0
\end{equation}
for some symmetric matrix $U(x_0)$. Since $A\in C^1(\mathcal{B}, \mathbb{R}^{m\times m})$, it follows that $U\in C^1(\mathcal{B}, \mathbb{R}^{m\times m})$.

Note that 
\begin{equation}
    \begin{aligned}
        \left|\ddt{} |w|^2\right| &\leq 2 |w| \left|A(x)w\right| \\
        &\leq2 \|A(x)\||w|^2 \\
        &\leq 2L|w|^2,
    \end{aligned}
\end{equation}
so $\ddt{} |w|^2 \geq -2L |w|^2$. Gr\"onwall's inequality then gives
\begin{equation}
    |w(t)|^2 \geq e^{-2Lt}|w_0|^2,
\end{equation}
and so
\begin{equation}
    V(x_0,w_0) \geq \frac{2L}{1-e^{-2LT}}\int_0^T e^{-2Lt} |w_0|^2 {\rm d}t = |w_0|^2,
\end{equation}
meaning that $V$ satisfies the first condition of \cref{eq:Lyapunov function conditions}.

For the second condition of \cref{eq:Lyapunov function conditions}, we define 
\begin{equation}
    v(t) = V(x(t),w(t)) = \frac{2L}{1-e^{-2LT}}\int_t^{t+T} |w(s)|^2 \, {\rm d}s.
\end{equation}
Then,
\begin{equation}
\begin{split}
     f(x(t))&\cdot D_xV(x(t),w(t)) + \left[ A(x(t)) w(t) \right] \cdot D_wV(x(t),w(t)) \\&= \ddt{v} \\&=  \frac{2L}{1-e^{-2LT}}\left(|w(t+T)|^2 - |w(t)|^2\right)
     \\&\leq  \frac{2L}{1-e^{-2LT}}\left(K^2 e^{-2\lambda T} -1\right) |w(t)|^2.
     \end{split}
\end{equation}
Choosing any $T\geq \frac{1}{\lambda} \log{K}$ ensures that the last line above is nonpositive, and so the second condition of \cref{eq:Lyapunov function conditions} is satisfied.
\end{proof}
\bibliographystyle{abbrvnat_nourl}
\bibliography{references.bib}

@inproceedings{Bochi2018,
author = {Bochi, J.},
booktitle = {Proc. Int. Congr. Math.},
pages = {1825--1846},
title = {{Ergodic optimization of Birkhoff averages and Lyapunov exponents}},
year = {2018}
}

@mastersthesis{manika2013application,
  title={{Application of the compound matrix theory for the computation of Lyapunov exponents of autonomous Hamiltonian systems}},
  author={Manika, Dorothea},
  year={2013},
  school={Aristotle University of Thessaloniki}
}

@article{martini2023bounding,
  title={{Bounding Lyapunov exponents through second additive compound matrices: Case studies and application to systems with first integral}},
  author={Martini, Davide and Angeli, David and Innocenti, Giacomo and Tesi, Alberto},
  journal={International Journal of Bifurcation and Chaos},
  volume={33},
  pages={2350114},
  year={2023},
  publisher={World Scientific}
}

@article{martini2022ruling,
  title={{Ruling out positive Lyapunov exponents by using the Jacobian’s second additive compound matrix}},
  author={Martini, Davide and Angeli, David and Innocenti, Giacomo and Tesi, Alberto},
  journal={IEEE Control Systems Letters},
  volume={6},
  pages={2924--2928},
  year={2022},
  publisher={IEEE}
}

@article{oeri2023convex,
  title={{Convex computation of maximal Lyapunov exponents}},
  author={Oeri, Hans and Goluskin, David},
  journal={Nonlinearity},
  volume={36},
  pages={5378},
  year={2023},
  publisher={IOP Publishing}
}

@article{muldowney1990compound,
  title={Compound matrices and ordinary differential equations},
  author={Muldowney, James S},
  journal={The Rocky Mountain Journal of Mathematics},
  pages={857--872},
  year={1990},
  publisher={JSTOR}
}

@book{marshall2010inequalities,
  title={Inequalities: Theory of Majorization and Its Applications},
  author={Marshall, A.W. and Olkin, I. and Arnold, B.C.},
  lccn={2010931704},
  series={Springer Series in Statistics},
  url={https://books.google.co.uk/books?id=I9wfajyOrooC},
  year={2010},
  publisher={Springer New York}
}

@article{kaplan1979chaotic,
  title={Chaotic behavior of multidimensional difference equations},
  author={Kaplan, James and Yorke, James},
  journal={Functional Differential Equations and Approximation of Fixed Points},
  pages={204--227},
  year={1979},
  publisher={Springer}
}

@book{kuznetsov2020attractor,
  title={Attractor dimension estimates for dynamical systems: theory and computation},
  author={Kuznetsov, Nikolay and Reitmann, Volker},
  year={2020},
  publisher={Springer}
}

@article{parker2021study,
  title={A study of the double pendulum using polynomial optimization},
  author={Parker, J P and Goluskin, D and Vasil, GM},
  journal={Chaos},
  volume={31},
  pages={103102},
  year={2021}
}

@article{goluskin2018bounding,
  title={{Bounding averages rigorously using semidefinite programming: mean moments of the Lorenz system}},
  author={Goluskin, David},
  journal={Journal of Nonlinear Science},
  volume={28},
  pages={621--651},
  year={2018},
  publisher={Springer}
}

@misc{benoit_legat_2024_11106280,
  author       = {Benoît Legat and
                  Sascha Timme and
                  Tillmann Weisser and
                  Lea Kapelevich and
                  Neven Sajko and
                  Manuel and
                  Alexander Demin and
                  Christoph Hansknecht and
                  Christopher Rackauckas and
                  Julia TagBot and
                  Marcelo Forets and
                  Tim Holy},
  title        = {{JuliaAlgebra/DynamicPolynomials.jl: v0.5.7}},
  month        = may,
  year         = 2024,
  publisher    = {Zenodo},
  version      = {v0.5.7},
  doi          = {10.5281/zenodo.11106280},
  url          = {https://doi.org/10.5281/zenodo.11106280}
}

@book{fiedler2008special,
  title={Special matrices and their applications in numerical mathematics},
  author={Fiedler, Miroslav},
  year={2008},
  publisher={Courier Corporation}
}

@article{eden1991local,
  title={Local and global {Lyapunov} exponents},
  author={Eden, A and Foias, C and Temam, R},
  journal={Journal of Dynamics and Differential Equations},
  volume={3},
  pages={133--177},
  year={1991},
  publisher={Springer}
}

@article{palacian2017periodic,
  title={Periodic solutions and {KAM} tori in a triaxial potential},
  author={Palaci{\'a}n, Jes{\'u}s F and Vidal, Claudio and Vidarte, Jhon and Yanguas, Patricia},
  journal={SIAM Journal on Applied Dynamical Systems},
  volume={16},
  pages={159--187},
  year={2017},
  publisher={SIAM}
}

@phdthesis{oeri2023thesis,
  title={Convex optimization methods for bounding {Lyapunov} exponents},
  author={Oeri, Hans},
  school={University of Victoria},
  year={2023}
}

@book{pikovsky2016lyapunov,
  title={Lyapunov exponents: a tool to explore complex dynamics},
  author={Pikovsky, Arkady and Politi, Antonio},
  year={2016},
  publisher={Cambridge University Press}
}

@article{tobasco2018optimal,
  title={Optimal bounds and extremal trajectories for time averages in nonlinear dynamical systems},
  author={Tobasco, Ian and Goluskin, David and Doering, Charles R},
  journal={Physics Letters A},
  volume={382},
  pages={382--386},
  year={2018},
  publisher={Elsevier}
}

@book{temam,
  title={Infinite-dimensional dynamical systems in mechanics and physics},
  author={Temam, Roger},
  volume={68},
  year={2012},
  publisher={Springer Science \& Business Media}
}

@book{khalil2002nonlinear,
  title={Nonlinear systems},
  author={Khalil, HK},
  year={2002},
publisher={Prentice Hall}
}

@inproceedings{papachristodoulou2002construction,
  title={On the construction of {Lyapunov} functions using the sum of squares decomposition},
  author={Papachristodoulou, Antonis and Prajna, Stephen},
  booktitle={Proceedings of the 41st IEEE Conference on Decision and Control, 2002.},
  volume={3},
  pages={3482--3487},
  year={2002},
  organization={IEEE}
}

@phdthesis{parrilo2000structured,
    author = {Parrilo, Pablo},
    title = {Structured Semidefinite Programs and Semialgebraic Geometry Methods in Robustness and Optimization},
    school = {California Institute of Technology},
    year = 2000
}

@article{gatermann2004symmetry,
  title={Symmetry groups, semidefinite programs, and sums of squares},
  author={Gatermann, Karin and Parrilo, Pablo A},
  journal={Journal of Pure and Applied Algebra},
  volume={192},
  pages={95--128},
  year={2004},
  publisher={Elsevier}
}

@book{Arnold_1998,
  title = {Random dynamical systems},
  author = {Arnold, L.},
  year = {1998},
  series = {Springer {{Monographs}} in {{Mathematics}}},
  publisher = {Springer-Verlag}
}

@article{leonov2016lyapunov,
  title={{Lyapunov dimension formula for the global attractor of the Lorenz system}},
  author={Leonov, Gennady A and Kuznetsov, Nikolay V and Korzhemanova, NA and Kusakin, DV},
  journal={Communications in Nonlinear Science and Numerical Simulation},
  volume={41},
  pages={84--103},
  year={2016},
  publisher={Elsevier}
}

@phdthesis{eden1989abstract,
  title={An abstract theory of L-exponents with applications to dimension analysis},
  author={Eden, Alp Osman},
  year={1989},
  school={Indiana University}
}

@article{lakshmi2020finding,
  title = {Finding Extremal Periodic Orbits with Polynomial Optimisation, with Application to a Nine-Mode Model of Shear Flow},
  author = {Lakshmi, M. and Fantuzzi, G. and {Fern{\'a}ndez-Caballero}, J. and Hwang, Y. and Chernyshenko, S.},
  year = {2020},
  journal = {SIAM J. Appl. Dyn. Syst.},
  volume = {19},
  pages = {763--787},
}

@article{parker2024lorenz,
  title={The {Lorenz} system as a gradient-like system},
  author={Parker, Jeremy P},
  journal={Nonlinearity},
  volume={37},
  pages={095022},
  year={2024},
  publisher={IOP Publishing}
}

@article{goluskin2019bounds,
  title={Bounds on mean energy in the {Kuramoto--Sivashinsky} equation computed using semidefinite programming},
  author={Goluskin, David and Fantuzzi, Giovanni},
  journal={Nonlinearity},
  volume={32},
  pages={1705},
  year={2019},
  publisher={IOP Publishing}
}

@Conference{sumofsquares,
  author    = {Legat, Beno{\^\i}t and Coey, Chris and Deits, Robin and Huchette, Joey and Perry, Amelia},
  title     = {{Sum-of-squares optimization in Julia}},
  booktitle = {The First Annual JuMP-dev Workshop},
  year      = {2017},
}

@article{symbolicwedderburn,
  title={{On property $(T)$ for $Aut(F_n)$ and $SL_n(Z)$}},
  author={Kaluba, Marek and Kielak, Dawid and Nowak, Piotr W},
  journal={Annals of Mathematics},
  volume={193},
  pages={539--562},
  year={2021},
  publisher={Department of Mathematics, Princeton University Princeton, New Jersey, USA}
}

@manual{mosek,
   author = {{MOSEK ApS}},
   title = "MOSEK Optimizer API for Julia 10.0",
   year = 2025,
   url = "https://docs.mosek.com/latest/juliaapi/index.html"
 }

@misc{clarabel,
      title={Clarabel: An interior-point solver for conic programs with quadratic objectives}, 
      author={Paul J. Goulart and Yuwen Chen},
      year={2024},
      journal={arXiv:2405.12762}
}

@article{hypatia,
    title={Solving natural conic formulations with {H}ypatia.jl},
    author={Chris Coey and Lea Kapelevich and Juan Pablo Vielma},
    year={2022},
    journal={INFORMS Journal on Computing},
    publisher={INFORMS},
    volume={34},
    pages={2686--2699},
}

@article{london1976derivations,
  title={On derivations arising in differential equations},
  author={London, David},
  journal={Linear and Multilinear Algebra},
  volume={4},
  pages={179--189},
  year={1976},
  publisher={Taylor \& Francis}
}

@book{winitzki2009linear,
  title={Linear algebra via exterior products},
  author={Winitzki, Sergei},
  year={2009},
  publisher={Sergei Winitzki}
}

@article{de1985motion,
  title={Motion in the core of a triaxial potential},
  author={de Zeeuw, Tim},
  journal={Monthly Notices of the Royal Astronomical Society},
  volume={215},
  pages={731--760},
  year={1985},
  publisher={Oxford University Press Oxford, UK}
}

@article{caranicolas19941,
  title={1:1:1 resonant periodic orbits in 3-dimensional galactic-type {Hamiltonians}},
  author={Caranicolas, ND},
  journal={Astronomy and Astrophysics},
  volume={282},
  pages={34--36},
  year={1994}
}

@article{putinar1993positive,
  title = {Positive Polynomials on Semi-Algebraic Sets},
  author = {Putinar, M.},
  year = {1993},
  journal = {Indiana University Mathematics Journal},
  shortjournal = {Indiana Univ. Math. J.},
  volume = {452},
  pages = {969--984}
}

@article{murty1985some,
  title={Some {NP-complete} problems in quadratic and nonlinear programming},
  author={Murty, Katta G and Kabadi, Santosh N},
  journal={Mathematical Programming: Series A and B},
  volume={39},
  pages={117--129},
  year={1987},
  publisher={Springer-Verlag Berlin, Heidelberg}
}

@article{doering1995shape,
  title={{On the shape and dimension of the Lorenz attractor}},
  author={Doering, Charles R and Gibbon, JD},
  journal={Dynamics and Stability of Systems},
  volume={10},
  pages={255--268},
  year={1995},
  publisher={Taylor \& Francis}
}

@article{reznick2000some,
  title={Some concrete aspects of {Hilbert's} 17th problem},
  author={Reznick, Bruce},
  journal={Contemporary Mathematics},
  volume={253},
  year={2000},
  publisher={Providence, RI: American Mathematical Society}
}

@book{Lorentz1986approximation,
  author    = {G. G. Lorentz},
  title     = {Approximation of Functions},
  edition   = {2nd},
  publisher = {Chelsea Publishing Company},
  year      = {1986}
}

@article{scherer2006matrix,
  title={Matrix sum of squares relaxations for robust semidefinite programs},
  author={Scherer, Carsten W. and Hol, Christoph W. J.},
  journal={Mathematical Programming},
  volume={107},
  pages={189--211},
  year={2006}
}

@article{leonov1993eden,
  title={Eden’s hypothesis for a {Lorenz} system},
  author={Leonov, G A and Lyashko, S},
  journal={Vestnik St. Petersburg University: Mathematics},
  volume={26},
  pages={15--18},
  year={1993}
}

@article{parker2025computation,
  title={Computation of minimal periods for ordinary differential equations},
  author={Parker, Jeremy P},
  journal={arXiv:2510.13650},
  year={2025}
}

@article{DifferentialEquations.jl-2017,
 author = {Rackauckas, Christopher and Nie, Qing},
 doi = {10.5334/jors.151},
 journal = {The Journal of Open Research Software},
 keywords = {Applied Mathematics},
 note = {Exported from https://app.dimensions.ai on 2019/05/05},
 number = {1},
 pages = {},
 title = {DifferentialEquations.jl – A Performant and Feature-Rich Ecosystem for Solving Differential Equations in Julia},
 url = {https://app.dimensions.ai/details/publication/pub.1085583166 and http://openresearchsoftware.metajnl.com/articles/10.5334/jors.151/galley/245/download/},
 volume = {5},
 year = {2017}
}

@article{RevelsLubinPapamarkou2016,
    title = {Forward-Mode Automatic Differentiation in {J}ulia},
   author = {{Revels}, J. and {Lubin}, M. and {Papamarkou}, T.},
  journal = {arXiv:1607.07892 [cs.MS]},
     year = {2016},
      url = {https://arxiv.org/abs/1607.07892}
}

@article{DynamicalSystems.jl-2018,
  doi = {10.21105/joss.00598},
  url = {https://doi.org/10.21105/joss.00598},
  year  = {2018},
  month = {mar},
  volume = {3},
  number = {23},
  pages = {598},
  author = {George Datseris},
  title = {DynamicalSystems.jl: A Julia software library for chaos and nonlinear dynamics},
  journal = {Journal of Open Source Software}
}

@book{DatserisParlitz2022,
  doi = {10.1007/978-3-030-91032-7},
  url = {https://doi.org/10.1007/978-3-030-91032-7},
  year = {2022},
  publisher = {Springer Nature},
  author = {George Datseris and Ulrich Parlitz},
  title     = "Nonlinear dynamics: A concise introduction interlaced with code",
  address   = "Cham, Switzerland",
  language  = "en",
}

@incollection{skokos2009lyapunov,
  title={The Lyapunov characteristic exponents and their computation},
  author={Skokos, Ch},
  booktitle={Dynamics of Small Solar System Bodies and Exoplanets},
  pages={63--135},
  year={2009},
  publisher={Springer}
}

\end{document}